 \def\bibfont{\small}%
 \def\bibsep{\smallskipamount}%
 \newcites{app}{References for Online Appendix}
 \newcites{ecom}{References for E-Companion}
\def\leaves{\mathbf{leaves}}
\def\splits{\mathbf{splits}}
\def\leftleaves{\mathbf{left}}
\def\rightleaves{\mathbf{right}}
\def\leftchild{\mathbf{leftchild}}
\def\rightchild{\mathbf{rightchild}}
\def\root{\mathbf{root}}
\def\LP{\mathbf{LP}}
\def\RP{\mathbf{RP}}
\def\CDC{\text{CDC}}
\def\ProductMIONaive{\texttt{PD}\xspace}
\def\SplitMIOGreedy{\texttt{SG}\xspace}
\def\SplitMIONaive{\texttt{SD}\xspace}
\def\fb{\mathbf{f}}
\def\xb{\mathbf{x}}
\def\yb{\mathbf{y}}
\def\zerob{\mathbf{0}}
\def\alphab{\boldsymbol \alpha}
\def\betab{\boldsymbol \beta}
\def\thetab{\boldsymbol \theta}
\def\lambdab{\boldsymbol \lambda}
\def\Ab{\mathbf{A}}
\def\Bb{\mathbf{B}}
\def\Cb{\mathbf{C}}
\def\Ib{\mathbf{I}}
\def\Pb{\mathbf{P}}
\def\Dcal{\mathcal{D}}
\def\Ecal{\mathcal{E}}
\def\Fcal{\mathcal{F}}
\def\Ical{\mathcal{I}}
\def\Mcal{\mathcal{M}}
\def\Ncal{\mathcal{N}}
\def\Scal{\mathcal{S}}
\def\Tcal{\mathcal{T}}
\def\Ucal{\mathcal{U}}
\def\Wcal{\mathcal{W}}
\def\Zcal{\mathcal{Z}}
\def\Ccal{\mathcal{C}}
\def\Ibb{\mathbb{I}}
\def\Rbb{\mathbb{R}}
\def\Benders{\text{Benders}}
\def\BendersTwoOnly{\text{Benders-2-only}}
\def\Direct{\text{Direct}}
\def\DNC{\text{D\&C}}
\def\GT{\texttt{GT}}
\def\AODF{\text{AODF}}
\newcommand{\YCR}[1]{{\color{black} #1}}
\newcommand{\YCRR}[1]{{\color{black} #1}}
\newcommand{\YCRF}[1]{{\color{black} #1}} %
\def\SplitMIO{\textsc{SplitMIO}\xspace}
\def\ProductMIO{\textsc{ProductMIO}\xspace}
\def\LocalSearchTen{\textsc{LS10}\xspace}
\def\LocalSearch{\textsc{LS}\xspace}
\def\ROA{\textsc{ROA}\xspace}
\def\LS{\mathbf{LS}}
\def\RS{\mathbf{RS}}
\def\Halmos{$\square$}
 \def\bibfont{\small}%
 \def\bibsep{\smallskipamount}%
\begin{document}

\RUNAUTHOR{Akchen and Mi\v{s}i\'{c}}

\RUNTITLE{Assortment Optimization under the Decision Forest Model}

\TITLE{\Large Assortment Optimization under the Decision Forest Model}

\ARTICLEAUTHORS{%
	 \AUTHOR{Yi-Chun Akchen}
	\AFF{School of Management, University College London, London E14 5AB, United Kingdom.\\\EMAIL{\tt yi-chun.akchen@ucl.ac.uk}}
	\AUTHOR{Velibor V. Mi\v{s}i\'{c}}
	\AFF{UCLA Anderson School of Management, University of California, Los Angeles, California 90095, United States, \EMAIL{\tt velibor.misic@anderson.ucla.edu}} %
}
\ABSTRACT{{\it Problem definition}: 
We study the problem of finding the optimal assortment that maximizes expected revenue under the decision forest model, a recently proposed nonparametric choice model that is capable of representing any discrete choice model and in particular, can be used to represent non-rational customer behavior. 
This problem is of practical importance because it allows a firm to tailor its product offerings to profitably exploit deviations from rational customer behavior, but at the same time is challenging due to the extremely general nature of the decision forest model. 
{\it Methodology/Results}: 
We approach this problem from a mixed-integer optimization perspective and present two different formulations. We further propose a methodology for solving the two formulations at a large-scale based on Benders decomposition, and show that the Benders subproblem can be solved efficiently by primal-dual greedy algorithms when the master solution is fractional for one of the formulations, and in closed form when the master solution is binary for both formulations. Using synthetically generated instances, we demonstrate the practical tractability of our formulations and our Benders decomposition approach, and their edge over heuristic approaches. 
{\it Managerial implications:} In a case study based on a real-world transaction data, we demonstrate that our proposed approach can factor the behavioral anomalies observed in consumer choice into assortment decision and create higher revenue.
}%

\KEYWORDS{decision trees; choice modeling; integer optimization; Benders decomposition; choice overload}

\maketitle

\section{Introduction}
\label{sec:intro}

Assortment optimization is a basic operational problem faced by many firms. In its simplest form, the problem can be posed as follows. A firm has a set of products that it can offer, and a set of customers who have preferences over those products; what is the set of products the firm should offer so as to maximize the revenue that results when the customers choose from these products? 

While assortment optimization and the related problem of product line design have been studied extensively under a wide range of choice models, the majority of research in this area focuses on rational choice models, specifically those that follow the random utility maximization (RUM) assumption. A significant body of research in the behavioral sciences shows that customers behave in ways that deviate significantly from predictions made by RUM models. In addition, there is a substantial number of empirical examples of firms that make assortment decisions in ways that directly exploit customer irrationality. For example, the paper of \cite{kivetz2004extending} provides an example of an assortment of document preparation systems from Xerox that are structured around the decoy effect, and an example of an assortment of servers from IBM that are structured around the compromise effect. 

A recent paper by \cite{chen2019decision} proposed a new choice model called the \emph{decision forest model} for capturing customer irrationalities. This model involves representing the customer population as a probability distribution over binary trees, with each tree representing the decision process of one customer type. In a key result of the paper, the authors showed that this model is universal: \emph{every discrete choice model is representable as a decision forest model}. While the paper of \cite{chen2019decision} alludes to the downstream assortment optimization problem, it is entirely focused on model representation and prediction: it does not provide any answer to how one can select an optimal assortment with respect to a decision forest model. 

In the present paper, we present a methodology for assortment optimization under the decision forest model, based on mixed-integer optimization. Our approach allows the firm to obtain assortments that are either provably optimal or within a desired optimality gap for a given decision forest model. \YCRR{Moreover, in general, business rules (e.g., capacity constraints) can be readily incorporated by adding corresponding linear constraints to the MIO formulation.} Most importantly, given the universality property of the decision forest model, our optimization approach allows a firm to optimally tailor its assortment to any kind of predictable irrationality in the firm's customer population. 

We make the following specific contributions:
\label{page:contributions_in_intro}

\begin{enumerate}
\item We formally define the assortment optimization problem under the decision forest model and analyze its computational hardness. Specifically, \YCRR{when the tree depth is bounded by a constant $k$, we prove that it is NP-hard to approximate the problem within a factor that grows exponentially in $k$}. \YCRR{This result complements the findings of \citet{chen2019decision}, who show that tree depth governs the representational power of the model: with sufficiently deep trees, the decision forest can represent any discrete choice model. However, they also note that deeper trees increase the risk of overfitting and recommend selecting tree depth via cross-validation. From an operations perspective, our hardness result further underscores the importance of shallow trees: while deeper trees enhance representation power, they also render the downstream assortment optimization problem inapproximable, highlighting the need to balance model expressiveness with tractability.} Given the computational challenge of the assortment optimization problem under the decision forest model, we then adopt an integer programming approach and present two formulations: \SplitMIO and \ProductMIO. 
\item We propose a Benders decomposition approach for solving the two formulations at scale. \YCRR{We apply the method to both the linearly relaxed and the integer programs so that the cuts generated from the relaxation provide strong upper bounds and warm-start the integer procedure}. For \SplitMIO, we show that Benders cuts for the linear relaxation can be obtained via a greedy algorithm that simultaneously solves the primal and dual subproblems. We also give a simple example demonstrating that this greedy construction fails for the primal subproblem of \ProductMIO. For both \SplitMIO\ and \ProductMIO, we further derive closed-form Benders cuts for the integer subproblems. \YCRR{Overall, our decomposition generalizes the scheme developed for the ranking-based model \citep{bertsimas2019exact} by introducing a new dual-variable update procedure that exploits the tree topology and is tailored to the subproblem structures of our formulations.}
\item We present three numerical experiments using both synthetic and real-world data. The first, based on synthetic instances, evaluates the formulation strength of \SplitMIO and \ProductMIO and demonstrates the scalability of the Benders decomposition approach for \SplitMIO on problems with up to 3,000 products, 500 trees, and 512 leaves per tree. The second uses the IRI Dataset \citep{bronnenberg2008database}, a large-scale grocery transaction dataset, to show how the decision forest model can capture a behavioral anomaly (\emph{choice overload}) and recommend assortments that differ from those implied by rational choice models. \YCRR{The third is a semi-synthetic, end-to-end (data-to-decision) experiment: we generate data from a ranking-based model derived from the Sushi Dataset \citep{kamishima2003nantonac}, augmented with a choice overload effect \citep{iyengar2000choice}. We show that the decision forest model delivers robust and consistent performance even as the overload effect intensifies, whereas the multinomial logit (MNL) and ranking-based models deteriorate noticeably, even under mild choice overload. Taken together, the second and third experiments highlight that the decision forest model can effectively incorporate behavioral anomalies observed in consumer choice into the assortment decision process.}
\end{enumerate}

We organize the paper as follows. Section~\ref{sec:literature_review} reviews related literature. Section~\ref{sec:model} defines the assortment problem, characterizes its inapproximability, and presents the two MIO formulations. Section~\ref{sec:benders} proposes a Benders decomposition approach for these formulations. Sections~\ref{sec:synthetic} and \ref{sec:IRI} present numerical results involving both synthetic and real-world data. All proofs are relegated to the appendix.

\section{Literature review}
\label{sec:literature_review}

Assortment optimization has been extensively studied in the operations management community; we refer readers to \cite{gallego2019assortment} for a recent review of the literature. The literature on assortment optimization has focused on developing approaches for finding the optimal assortment under many different rational choice models, such as the MNL model \citep{talluri2004revenue,sumida2020revenue}, the latent class MNL model \citep{bront2009column,mendez2014branch}, the Markov chain choice model \citep{feldman2017revenue,desir2020constrained} and the ranking-based model \citep{aouad2015assortment,aouad2018approximability,feldman2019assortment}. 

In addition to the assortment optimization literature, our paper is also related to the literature on product line design found in the marketing community. While assortment optimization is more often focused on the tactical decision of selecting which existing products to offer, where the products are ones that have been sold in the past and the choice model comes from transaction data involving those products, the product line design problem involves selecting which new products to offer, where the products are candidate products (i.e., they have not been offered before) and the choice model comes from conjoint survey data, where customers are asked to rate or choose between hypothetical products. Research in this area has considered different approaches to solve the problem under the ranking-based/first-choice model \citep{mcbride1988integer,belloni2008optimizing,bertsimas2019exact} and the MNL model \citep{chen2000mathematical,schon2010optimal}. %

Our paper is related to \cite{bertsimas2019exact}, which presents integer optimization formulations of the product line design problem when the choice model is a ranking-based model. As we will see later, our formulation \SplitMIO can be viewed a generalization of the formulation \cite{bertsimas2019exact}, to the decision forest model. In addition, the paper of \cite{bertsimas2019exact} develops a specialized Benders decomposition approach for its formulation, which uses the fact that one can solve the subproblem associated with each customer type by applying a greedy algorithm. We will show in Section~\ref{sec:benders} that this same property generalizes to the \SplitMIO formulation, leading to a tailored Benders decomposition algorithm for solving \SplitMIO at scale. 

Beyond these specific connections, the majority of the literature on assortment optimization and product line design considers rational choice models, whereas our paper contributes a methodology for non-rational assortment optimization. Fewer papers have focused on choice modeling for non-rational customer behavior; besides the decision forest model, other models include the generalized attraction model (GAM; 
\citealt{gallego2015general}), the generalized stochastic preference model \citep{berbeglia2018generalized} and the generalized Luce model \citep{echenique2019general}. An even smaller set of papers has considered assortment optimization under non-rational choice models, which we now review. The paper of \cite{flores2017assortment} considers assortment optimization under the two-stage Luce model, and develops a polynomial time algorithm for solving the unconstrained assortment optimization problem. The paper of \cite{rooderkerk2011incorporating} considers a context-dependent utility model where the utility of a product can depend on other products that are offered and that can capture compromise, attraction and similarity effects; the paper empirically demonstrates how incorporating context effects leads to a predicted increase of 5.4\% in expected profit.

Relative to these papers, our paper differs in that it considers the decision forest model. As noted earlier, the decision forest model can represent any type of choice behavior, and as such, an assortment optimization methodology based on such a model is attractive in terms of allowing a firm to take the next step from a high-fidelity model to a decision. In addition, our methodology is built on mixed-integer optimization. This is advantageous because it allows a firm to leverage continuing improvements in integer optimization solvers such as Gurobi \citep{gurobi} and CPLEX \citep{cplex}, as well as continuing improvements in computer hardware. At the same time, integer optimization allows firms to accommodate business requirements using linear constraints, enhancing the practical applicability of the approach. Lastly, integer optimization also allows one to take advantage of well-studied large-scale solution methods. One such method that we focus on in this paper is Benders decomposition, which has seen an impressive resurgence in recent years for delivering state-of-the-art performance on large-scale problems such as hub location \citep{contreras2011benders}, facility location \citep{fischetti2017redesigning} and set covering \citep{cordeau2019benders}; see also \cite{rahmaniani2017benders} for a review of the recent literature. %

In addition to the assortment optimization and product line design literatures, our formulations also \YCRR{relate to} others that have been proposed in the broader optimization literature. The formulation \SplitMIO that we will present later can be viewed as a special case of the mixed-integer optimization formulation of \cite{misic2019optimization} for optimizing the predicted value of a tree ensemble model, such as a random forest or a boosted tree model. The other formulation, \ProductMIO, also has a connection to the integer optimization literature on formulating disjunctive constraints through independent branching schemes \citep{vielma2010mixed,vielma2011modeling,huchette2019combinatorial}; we also discuss this connection in more detail in Section~\ref{subsec:model_ProductMIO}.

\section{Optimization model}
\label{sec:model}

In this section, we define the decision forest assortment optimization problem (Section~\ref{subsec:model_problemdefinition}), analyze its computational complexity (Section~\ref{subsec:inapproximability}), and present two mixed-integer formulations: \SplitMIO (Section~\ref{subsec:model_SplitMIO}) and \ProductMIO (Section~\ref{subsec:model_ProductMIO}). We then compare these formulations and highlight their respective advantages across different regimes (Section~\ref{subsec:SplitMIO_vs_ProductMIO}).

\subsection{Problem definition}
\label{subsec:model_problemdefinition}

We first briefly review the decision forest model of \cite{chen2019decision}, and then formally state the assortment optimization problem. We assume that there are $n$ products, indexed from 1 to $n$, and let $\Ncal = \{1,\dots,n\}$ denote the set of all products. An assortment $S$ corresponds to a subset of $\Ncal$. When offered $S$, a customer may choose to purchase one of the products in $S$, or to not purchase anything from $S$ at all; we use the index $0$ to denote the latter possibility, which we will also refer to as the no-purchase option or the outside option. 

The basic building block of the decision forest model is a purchase decision tree. Each tree represents the purchasing behavior of one type of customer when presented with an assortment $S$. Specifically, for an assortment $S$, the customer behaves as follows: the customer starts at the root of the tree. The customer checks whether the product corresponding to the root node is contained in $S$; if it is, she proceeds to the left child, and if not, she proceeds to the right child. She then checks again with the product at the new node, and the process repeats, until the customer reaches a leaf; the option that is at the leaf represents the choice of that customer. Figure~\ref{figure:decision_tree_example} (left) shows an example of a purchase decision tree being used to map an assortment to a purchase decision.

\begin{figure}
	\begin{center}
		\begin{subfigure}[t]{0.3\textwidth}
			\centering
			\includegraphics[width=1\textwidth]{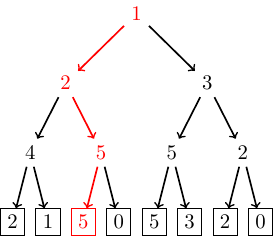}
		\end{subfigure}
		\qquad\qquad
		\begin{subfigure}[t]{0.3\textwidth}
			\centering
			\includegraphics{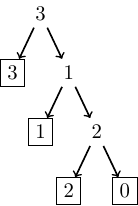}
		\end{subfigure}
	\end{center}
	\caption{(left) Example of a purchase decision tree for $n = 5$ products. Leaf nodes are enclosed in squares, while split nodes are not enclosed. The number on each node corresponds either to $v(t,s)$ for splits, or $c(t,\ell)$ for leaves. \YCRR{The path highlighted in red shows how a customer represented by this tree makes a purchase decision when facing the assortment $S = \{ 1, 3, 4, 5 \}$.} For this assortment, the customer's decision is to purchase product 5; (right) Example of a purchase decision tree that represents a preference ranking $\{ 3 \succ 1 \succ 2 \succ 0  \}$. \label{figure:decision_tree_example}}
\end{figure}

Formally, a purchase decision tree is a directed binary tree, with each leaf node corresponding to an option in $\Ncal \cup \{0\}$, and each non-leaf (or \emph{split}) node corresponding to a product in $\Ncal$. We use $\splits(t)$ to denote the set of split nodes of tree $t$, and $\leaves(t)$ to denote the set of leaf nodes. We use $c(t,\ell)$ to denote the purchase decision of leaf $\ell$ of tree $t$, i.e., the option chosen by tree $t$ if the assortment is mapped to leaf $\ell$. We use $v(t,s)$ to denote the product that is checked at split node $s$ in tree $t$. We make the following assumption about our purchase decision trees.
\begin{assumption}
Let $t$ be a purchase decision tree. For any two split nodes $s$ and $s'$ of $t$ such that $s'$ is a descendant of $s$, $v(t,s) \neq v(t,s')$. \label{assumption:Requirement_3}
\end{assumption}
This assumption states that once a product appears on a split $s$, it cannot appear on any subsequent split $s'$ that is reached by proceeding to the left or right child of $s$; in other words, each product in $\Ncal$ appears at most once along the path from the root node to a leaf node, for every leaf node. As discussed in \cite{chen2019decision}, this assumption is not restrictive, as any tree for which this assumption is violated has a set of splits and leaves that are redundant and unreachable, and the tree can be modified to obtain an equivalent tree that satisfies the assumption.

The decision forest model assumes that the customer population is represented by a collection of trees or a \emph{forest} $F$. Each tree $t \in F$ corresponds to a different customer type. We use $\lambda_t$ to denote the probability associated with customer type/tree $t$, and $\lambdab = (\lambda_t)_{t \in F}$ to denote the probability distribution over the forest $F$. For each tree $t$, we use $\eta(t,S)$ to denote the choice that a customer type following tree $t$ will make when given the assortment $S$. For a given assortment $S \subseteq \Ncal$ and a given choice $j \in S \cup \{0\}$, we use $\Pb^{(F,\lambdab)}(j \mid S)$ to denote the choice probability, i.e., the probability of a random customer choosing $j$ when offered the assortment $S$. It is defined as
\begin{equation}
\Pb^{(F, \lambdab)}(j \mid S) = \sum_{t \in F} \lambda_t \cdot \Ibb\{  \eta(t,S)  = j  \}. 
\end{equation}

We now define the assortment optimization problem. We use $\rho_i$ to denote the marginal revenue of product $i$; for convenience, we use $\rho_0 = 0$ to denote the revenue of the no-purchase option. The assortment optimization problem that we wish to solve is 
\begin{equation}
\underset{S \subseteq \Ncal}{\text{maximize}} \ \sum_{i \in S} \YCR{\rho_i} \cdot \Pb^{(F, \lambdab)}(i \mid S). \label{prob:assortment_opt_abstract}
\end{equation}
This is a challenging problem due to the generality of the choice model $\Pb^{(F, \lambdab)}(\cdot \mid \cdot)$. Notably, it subsumes the assortment optimization problem under the ranking-based model, which is known to be NP-hard \citep{aouad2018approximability,feldman2019assortment}. This conclusion arises from the observation that, in the special case where each tree $t \in F$ branches exclusively to the right -- meaning the left child of every split node in tree $t$ is always a leaf node (as illustrated in the right panel of Figure~\ref{figure:decision_tree_example}) -- the forest $F$ simplifies into a collection of preference rankings. This observation directly leads to the following corollary.
\begin{corollary}
	The assortment optimization problem~\eqref{prob:assortment_opt_abstract} is NP-Hard. \label{corollary:AODF_NPHard}
\end{corollary}

\subsection{Inapproximability Results}
\label{subsec:inapproximability}

We strengthen the complexity results for the assortment optimization problem~\eqref{prob:assortment_opt_abstract} by providing an inapproximability gap based on the depth of the decision trees. First, we define the depth of a tree $t$ as the maximal distance from the root of tree $t$ to any leaf node $\ell \in \leaves(t)$. For example, both trees in Figure~\ref{figure:decision_tree_example} are of depth three. Second, we let $\AODF(k)$ denote problem~\eqref{prob:assortment_opt_abstract} where each tree $t \in F$ can have depth at most $k$. With these definitions, we have the following result.
\begin{theorem}
	\label{theorem:inapproximable_exponential_gap}
	If P $\neq$ NP, then for every constant $k \geq 7$ and $\epsilon >0$, there is no polynomial-time $\left( 2^{k - 2 \lceil  \sqrt{k}  \rceil } / (2ek) - \epsilon \right)$-approximation algorithm for \AODF(k+1), even when each tree has equal probability weight in the model, i.e., $\lambda_t = 1/ |F|$. 
\end{theorem}
We prove Theorem~\ref{theorem:inapproximable_exponential_gap} by constructing an approximation-preserving reduction between $\AODF(k+1)$ and the Max 1-in-E$k$SAT problem \citep{guruswami2005complexity}, for which the same inapproximability result holds. The Max 1-in-E$k$SAT problem is a variant of the classic SAT problem \citep{garey2002computers}, where each clause is satisfied if and only if exactly one literal in the clause is true, and each clause consists of exactly $k$ literals. In our reduction, we construct a decision tree with depth $k+1$ and $O(k^2)$ leaf nodes for each clause in the Max 1-in-E$k$SAT problem.
	
Theorem~\ref{theorem:inapproximable_exponential_gap} implies that the inapproximability gap of the assortment problem~\eqref{prob:assortment_opt_abstract} grows \emph{at least} exponentially with respect to the tree depth, since the ratio in Theorem~\ref{theorem:inapproximable_exponential_gap} follows $2^{k - 2 \lceil  \sqrt{k}  \rceil } / (2ek) = 2^{  k - 2 \lceil \sqrt{k} \rceil - \log k - 1 - \log_2 e  } = 2^{k - o(k)}$. The following proposition further shows that this exponential dependence is asymptotically tight in $k$ (i.e., when $k$ is large), as there exists an approximation algorithm achieving approximation ratio $2^k$.
\begin{proposition}
	\label{prop:randomized_approx_algo}
	\AODF(k) can be approximated within factor $2^k$.
\end{proposition}

Before we proceed to solve the challenging assortment optimization problem~\eqref{prob:assortment_opt_abstract}, we pause to make two remarks on Theorem~\ref{theorem:inapproximable_exponential_gap}. First, recall that any ranking of length $k$ can be represented by a decision tree that is constrained to branch to the right and of depth $k$. \cite{aouad2018approximability} shows that the assortment optimization under the ranking-based model is NP-hard to \YCRR{approximate} within $O(k^{1-\epsilon})$ factor provided that each ranking in the model has a length of at most $k$ (see Section 3.2 of \cite{aouad2018approximability}). Theorem~\ref{theorem:inapproximable_exponential_gap} shows that the assortment problem becomes much harder when optimizing over decision trees instead of rankings. Specifically, the inapproximability factor grows from a linear rate $O(k)$ to an exponential rate $O(2^k)$ as a function of $k$.

Second, together with \cite{chen2019decision}, Theorem~\ref{theorem:inapproximable_exponential_gap} highlights a fundamental tradeoff between the representation power and tractability in the decision forest model. In particular, \cite{chen2019decision} shows that as the decision trees grow deeper, the model becomes more expressive in capturing choice behavior; at its maximum depth, $n$, the decision forest model can represent any discrete choice model (see Theorems 1 and 2 in \cite{chen2019decision}). At the same time, our Theorem~\ref{theorem:inapproximable_exponential_gap} establishes that deeper trees also lead to increased computational complexity for assortment optimization, with inapproximability growing exponentially in tree depth. \YCRR{This tradeoff highlights the need to manage tree depth so that the decision forest model remains expressive enough to characterize consumer choice yet tractable for downstream operational tasks.}

Given the inapproximability of problem~\eqref{prob:assortment_opt_abstract}, we consider a mixed-integer optimization (MIO) approach to solve the problem and present two formulations in the next two subsections.

\subsection{Formulation 1: \SplitMIO}
\label{subsec:model_SplitMIO}

We now present our first formulation of the assortment optimization problem~\eqref{prob:assortment_opt_abstract} as a mixed-integer optimization (MIO) problem. To formulate the problem, we introduce some additional notation. For notational convenience we let $r_{t,\ell} = \rho_{c(t,\ell)}$ be the revenue of the purchase option of leaf $\ell$ of tree $t$. We let $\leftleaves(s)$ denote the set of leaf nodes that are to the left of split $s$ (i.e., can only be reached by taking the left branch at split $s$), and similarly, we let $\rightleaves(s)$ denote the leaf nodes that are to the right of $s$. We introduce two sets of decision variables. For each $i \in \Ncal$, we let $x_i$ be a binary decision variable that is 1 if product $i$ is included in the assortment, and 0 otherwise. For each tree $t \in F$ and leaf $\ell \in \leaves(t)$, we let $y_{t,\ell}$ be a binary decision variable that is 1 if the assortment encoded by $\xb$ is mapped to leaf $\ell$ of tree $t$, and 0 otherwise.

With these definitions, our first formulation, \SplitMIO, is given below.
\begin{subequations}
\begin{alignat}{2}
\SplitMIO: \quad & \underset{\xb, \yb}{\text{maximize}} &\quad & \sum_{t \in F} \lambda_t \cdot \left[  \sum_{\ell \in \leaves(t)} r_{t,\ell} y_{t,\ell} \right] \\
& \text{subject to} & & \sum_{\ell \in \leaves(t)} y_{t,\ell} = 1, \quad \forall \ t \in F, \label{prob:SplitMIO_y_unitsum} \\
& & & \sum_{\ell \in \leftleaves(s)} y_{t,\ell} \leq x_{v(t,s)}, \quad \forall \ t \in F, \ s \in \splits(t), \label{prob:SplitMIO_left} \\
& & & \sum_{\ell \in \rightleaves(s)} y_{t,\ell} \leq 1 - x_{v(t,s)}, \quad \forall \ t \in F, \ s \in \splits(t), \label{prob:SplitMIO_right} \\
& & & x_i \in \{0,1\}, \quad \forall \ i \in \Ncal, \label{prob:SplitMIO_x_binary}\\
& & & y_{t,\ell} \geq 0, \quad \forall \ t \in F, \ \ell \in \leaves(t). \label{prob:SplitMIO_y_nonnegative}
\end{alignat}
\label{prob:SplitMIO}%
\end{subequations}
In order of appearance, the constraints in this formulation have the following meaning. Constraint~\eqref{prob:SplitMIO_y_unitsum} requires that for each customer type $t$, the assortment encoded by $\xb$ is mapped to exactly one leaf. Constraint~\eqref{prob:SplitMIO_left} imposes that for a split $s$ in tree $t$, if product $v(t,s)$ is not in the assortment, then the assortment cannot be mapped to any of the leaves that are to the left of split $s$ in tree $t$. Constraint~\eqref{prob:SplitMIO_right} is the symmetric case of constraint~\eqref{prob:SplitMIO_left} for the right-hand subtree of split $s$ of tree $t$. The last two constraints require that $\xb$ is binary and $\yb$ is nonnegative. Note that it is not necessary to require $\yb$ to be binary. \YCRR{Whenever $\xb$ is binary, there exists an integer-valued solution for $\yb$.} Finally, the objective function corresponds to the expected per-customer revenue of the assortment.

The \SplitMIO formulation is related to two existing MIO formulations in the literature. First, it can be viewed as a specialized case of the MIO formulation in \cite{misic2019optimization}. In that paper, the author develops a formulation for tree ensemble optimization, i.e., the problem of setting the independent variables in a tree ensemble model (e.g., a random forest or a gradient boosted tree model) to maximize the value predicted by that ensemble. Later, in Section~\ref{sec:benders}, we introduce a specialized algorithm that leverages a primal-dual greedy approach to efficiently solve the linear relaxation of \SplitMIO. This method, which enhances the tractability of the \SplitMIO formulation, represents our contribution to the literature and was not considered in \cite{misic2019optimization}. 
Second, the \SplitMIO formulation also relates to the MIO formulation for the product line design problem under the ranking-based model \citep{bertsimas2019exact}. Recall that the ranking-based model can be regarded as a special case of the decision forest model (illustrated in the right panel of Figure~\ref{figure:decision_tree_example}). In this special case, the formulation~\eqref{prob:SplitMIO} can be reduced to the MIO formulation for product line design under ranking-based models presented in \cite{bertsimas2019exact}.

\subsection{Formulation 2: \ProductMIO}
\label{subsec:model_ProductMIO}

The second formulation of problem~\eqref{prob:assortment_opt_abstract} that we will present is motivated by the behavior of \SplitMIO when a product participates in two or more splits. In particular, observe that a product $i$ may participate in two different splits $s_1$ and $s_2$ in the same decision tree $t$. In this case, constraint~\eqref{prob:SplitMIO_left} in the \SplitMIO will result in two constraints:
\begin{align}
& \sum_{\ell \in \leftleaves(s_1)} y_{t,\ell} \leq x_i, \label{eq:split_constr_s1}\\
& \sum_{\ell \in \leftleaves(s_2)} y_{t,\ell} \leq x_i. \label{eq:split_constr_s2}
\end{align}
In the above two constraints, observe that $\leftleaves(s_1)$ and $\leftleaves(s_2)$ are disjoint (this is a direct consequence of Assumption~\ref{assumption:Requirement_3}). Given this and constraint~\eqref{prob:SplitMIO_y_unitsum} that requires the $y_{t,\ell}$ variables to sum to 1, we can come up with a constraint that strengthens constraints~\eqref{eq:split_constr_s1} and \eqref{eq:split_constr_s2} by combining them:
\begin{equation}
\sum_{\ell \in \leftleaves(s_1)} y_{t,\ell} + \sum_{\ell \in \leftleaves(s_2)} y_{t,\ell} \leq x_i. 
\end{equation}
In general, one can aggregate all the $y_{t,\ell}$ variables that are to the left of all splits involving a product $i$ to produce a single left split constraint for product $i$. The same can also be done for the right split constraints. Generalizing this principle leads to the following alternate formulation, which we refer to as \ProductMIO:
\begin{subequations}
\begin{alignat}{2}
\ProductMIO: \quad & \underset{\xb, \yb}{\text{maximize}} &\quad & \sum_{t \in F} \lambda_t \cdot \left[  \sum_{\ell \in \leaves(t)} r_{t,\ell} y_{t,\ell} \right] \\
& \text{subject to} & & \sum_{\ell \in \leaves(t)} y_{t,\ell} = 1, \quad \forall \ t \in F, \label{prob:ProductMIO_unisum} \\
& & & \sum_{ \substack{s \in \splits(t):\\ v(t,s) = i}} \ \sum_{\ell \in \leftleaves(s)} y_{t,\ell} \leq x_i, \quad \forall \ t \in F, \ i \in \Ncal, \label{prob:ProductMIO_leftsplitconstraint}\\
& & & \sum_{ \substack{s \in \splits(t):\\ v(t,s) = i}} \  \sum_{\ell \in \rightleaves(s)} y_{t,\ell} \leq 1 - x_i, \quad \forall \ t \in F, \ i \in \Ncal, \label{prob:ProductMIO_rightsplitconstraint} \\
& & & x_i \in \{0,1\}, \quad \forall \ i \in \Ncal, \\
& & & y_{t,\ell} \geq 0, \quad \forall \ t \in F, \ \ell \in \leaves(t).
\end{alignat}
\label{prob:ProductMIO}%
\end{subequations}
Note that while both formulations have the same number of variables, \YCRR{formulation~\ProductMIO has a smaller or equal number of constraints compared with \SplitMIO}. In particular, \SplitMIO has one left and one right split constraint for each split in each tree, whereas \ProductMIO has one left and one right split constraint for each product. When the trees involve a large number of splits, this can lead to a sizable reduction in the number of constraints. Additionally, when a product does not appear in any splits of a tree, we can also safely omit constraints~\eqref{prob:ProductMIO_leftsplitconstraint} and \eqref{prob:ProductMIO_rightsplitconstraint} for that product. As we noted earlier, formulation~\ProductMIO is at least as strong as formulation~\SplitMIO. Let $\Fcal_{\ProductMIO}$ be the feasible region of the linear optimization (LO) relaxation of \ProductMIO. The following proposition formalizes the claim regarding the strength of the two formulations.
\begin{proposition}
	For any decision forest model $(F, \lambdab)$, $\Fcal_{\ProductMIO} \subseteq \Fcal_{\SplitMIO}$. \label{proposition:ProductMIO_stronger_than_SplitMIO}
\end{proposition}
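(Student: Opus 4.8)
The plan is to prove the inclusion directly: I would take an arbitrary $(\xb,\yb)\in\Fcal_{\ProductMIO}$ and verify that it satisfies every defining constraint of the linear relaxation of \SplitMIO, handling the constraint families one at a time. The unit-sum constraints~\eqref{prob:SplitMIO_y_unitsum}, the box constraints~\eqref{prob:SplitMIO_x_binary} relaxed to $0\le x_i\le 1$, and the nonnegativity constraints~\eqref{prob:SplitMIO_y_nonnegative} appear verbatim in both relaxations, so there is nothing to check for them; the only real content is in the disaggregated split constraints~\eqref{prob:SplitMIO_left} and~\eqref{prob:SplitMIO_right}.

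For~\eqref{prob:SplitMIO_left}, I would fix $t\in F$ and $s\in\splits(t)$ and set $i := v(t,s)$. The split $s$ is then one of the splits $s'$ of $t$ with $v(t,s')=i$, so $\sum_{\ell\in\leftleaves(s)}y_{t,\ell}$ is one of the (nonnegative) summands appearing on the left-hand side of the \ProductMIO constraint~\eqref{prob:ProductMIO_leftsplitconstraint} for product $i$ in tree $t$. Using $\yb\ge\zerob$ to discard the remaining summands gives
\begin{equation*}
\sum_{\ell\in\leftleaves(s)}y_{t,\ell} \;\le\; \sum_{\substack{s'\in\splits(t):\\ v(t,s')=i}}\;\sum_{\ell\in\leftleaves(s')}y_{t,\ell} \;\le\; x_i \;=\; x_{v(t,s)},
\end{equation*}
which is precisely~\eqref{prob:SplitMIO_left} for this $(t,s)$. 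The identical reasoning applied to~\eqref{prob:ProductMIO_rightsplitconstraint} yields~\eqref{prob:SplitMIO_right}. Since $t$ and $s$ were arbitrary, $(\xb,\yb)\in\Fcal_{\SplitMIO}$, and since $(\xb,\yb)$ was arbitrary, $\Fcal_{\ProductMIO}\subseteq\Fcal_{\SplitMIO}$.

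I do not expect a genuine obstacle here: the inclusion holds simply because each disaggregated split constraint of \SplitMIO is the restriction of a \ProductMIO product constraint to a subset of nonnegative leaf variables, exactly mirroring the (reverse) aggregation argument used in Section~\ref{subsec:model_ProductMIO} to motivate \ProductMIO. The one point worth flagging is that this direction uses only $\yb\ge\zerob$; neither Assumption~\ref{assumption:Requirement_3} (disjointness of the sets $\leftleaves(s)$ over splits on the same product) nor the unit-sum constraint~\eqref{prob:SplitMIO_y_unitsum} is needed here — those enter only in the converse, namely in arguing that \ProductMIO remains a valid formulation of problem~\eqref{prob:assortment_opt_abstract}.
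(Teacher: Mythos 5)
Your proof is correct and is exactly the argument the paper has in mind: the paper omits the proof, remarking that it "follows straightforwardly using the logic given above," i.e., that each disaggregated constraint~\eqref{prob:SplitMIO_left} or \eqref{prob:SplitMIO_right} is implied by the corresponding aggregated \ProductMIO constraint together with $\yb \geq \zerob$, which is precisely your verification. Your side remark that neither Assumption~\ref{assumption:Requirement_3} nor the unit-sum constraint is needed for this inclusion is also accurate.
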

The proof follows straightforwardly using the logic given above; we thus omit the proof. Finally, while \ProductMIO appears as a tightened reformulation of \SplitMIO, its structure connects to another type of formulation in the literature. In particular, a stream of papers in the mixed-integer optimization community \citep{vielma2010mixed,vielma2011modeling,huchette2019combinatorial} has considered a general approach for deriving small and strong formulations of disjunctive constraints using independent branching schemes. In Section~\ref{subsec:appendix-CDC}, we briefly review the most general such approach from \cite{huchette2019combinatorial} and showcase its connection to \ProductMIO.

\subsection{Comparative Insights on the \SplitMIO and \ProductMIO Formulations}
\label{subsec:SplitMIO_vs_ProductMIO}

\YCRR{For computational and practical purposes, as we illustrate in Section~\ref{subsec:synthetic_T_integrality_gap}, \ProductMIO is generally preferable to \SplitMIO when the assortment problem fits into computer memory and both formulations can be solved directly using commercial solvers such as Gurobi \citep{gurobi}. In our numerical experiments, this corresponds to the regime where the number of products is moderate (up to $n=500$). In this setting, \ProductMIO clearly outperforms \SplitMIO when the trees are highly structured and products are frequently repeated across splits, and shows a smaller but still noticeable edge when the trees are less structured. By contrast, when the problem size increases further (up to $n=3000$), \SplitMIO becomes preferable due to the availability of a tailored Benders decomposition method developed in Section~\ref{sec:benders}. In particular, for \SplitMIO, Benders cuts can be generated efficiently for both integral and fractional outer decisions $\xb$, using closed-form expressions in the integral case and our proposed primal-dual greedy algorithm in the fractional case. For \ProductMIO, however, such efficient cut generation is only available when $\xb$ is integral. This gives \SplitMIO a distinct advantage in large-scale settings during the relaxation phase of Benders decomposition. We demonstrate this advantage in Section~\ref{subsec:greedy_experiment}.}

\section{Solution methodology based on Benders decomposition}
\label{sec:benders}

While the formulations in Section~\ref{sec:model} bring the assortment optimization problem under the decision forest choice model closer to being solvable in practice, the effectiveness of these formulations can be limited in large-scale problems. In particular, consider the case where there is a large number of trees in the decision forest model and each tree consists of a large number of splits and leaves. In this setting, both formulations -- \SplitMIO and \ProductMIO -- will have a large number of $y_{t,\ell}$ variables and a large number of constraints to link those variables with the $x_i$ variables, and may require significant computation time.

At the same time, \SplitMIO and \ProductMIO share a common problem structure. In particular, they have two sets of variables: the $\xb$ variables, which determine the products that are to be included, and the $(\yb_t)_{t \in F}$ variables, which model the choice of each customer type. In addition, for any two trees $t, t'$ such that $t \neq t'$, the $\yb_t$ variables and $\yb_{t'}$ variables do not appear together in any constraints. Thus, one can view each of our two formulations as \emph{a two-stage stochastic program}, where each tree $t$ corresponds to a scenario; the variable $\xb$ corresponds to the first-stage decision; and the variable $\yb_t$ corresponds to the second-stage decision under scenario $t$, which is appropriately constrained by the first-stage decision $\xb$. 

Thus, one can apply Benders decomposition to solve the problem. At a high level, Benders decomposition involves using linear optimization duality to represent the optimal value of the second-stage problem for each tree $t$ as a piecewise-linear concave function of $\xb$, and to eliminate the $(\yb_t)_{t \in F}$ variables. One can then re-write the optimization problem in epigraph form, resulting in an optimization problem in terms of the $\xb$ variable and an auxiliary epigraph variable $\theta_t$ for each tree $t$, and a large family of constraints linking $\xb$ and $\theta_t$ for each tree $t$. Although the constraints for each tree $t$ are too many to be enumerated, one can solve the problem through constraint generation.

The main message of this section is that the second-stage problem can be solved in a computationally efficient manner. In the remaining sections, we carefully analyze the second-stage problem for both formulations. For \SplitMIO, we show that the second-stage problem can be solved by a greedy algorithm when $\xb$ is fractional (Section~\ref{subsec:benders_SplitMIO_relaxation}). For \ProductMIO, we show that the same greedy approach does not solve the second-stage problem in the fractional case (Section~\ref{subsec:benders_ProductMIO_relaxation}). For both formulations, when $\xb$ is binary, we characterize the primal and dual solutions in closed form (Section~\ref{subsec:bender_integral_solutions}). Lastly, in Section~\ref{subsec:benders_overall_approach}, we briefly describe our overall algorithmic approach to solving the assortment optimization problem, which involves solving the Benders reformulation of the relaxed problem, followed by the Benders reformulation of the integer problem.

\subsection{Benders reformulation of the \SplitMIO relaxation} 
\label{subsec:benders_SplitMIO_relaxation}

The Benders reformulation of the LO relaxation of \SplitMIO can be written as
\begin{subequations}
	\label{prob:SplitMIO_benders_master_abstract}
	\begin{alignat}{2}
	& \underset{\xb,\thetab}{\text{maximize}} & \quad & \sum_{t \in F} \lambda_t \theta_t \\
	& \text{subject to} & & \theta_t \leq G_t(\xb), \quad \forall \ t \in F,\\
	& & & \xb \in [0,1]^n,
	\end{alignat}
\end{subequations}
where the function $G_t(\xb)$ is the optimal value of the following subproblem for tree $t$:
\begin{subequations}
\label{prob:SplitMIO_sub_primal}
\begin{alignat}{2}
G_t(\xb) = \quad & \underset{\yb_t}{\text{maximize}} & \quad & \sum_{\ell \in \leaves(t)} r_{t,\ell} \cdot y_{t, \ell} \\
& \text{subject to} & & \sum_{\ell \in \leaves(t)} y_{t, \ell} = 1, \label{prob:SplitMIO_sub_primal_unitsum} \\
& & & \sum_{\ell \in \leftleaves(s)} y_{t, \ell} \leq x_{v(t,s)}, \quad \forall \ s \in \splits(t), \label{prob:SplitMIO_sub_primal_left} \\
& & & \sum_{\ell \in \rightleaves(s)} y_{t, \ell} \leq 1 - x_{v(t,s)}, \quad \forall \ s \in \splits(t), \label{prob:SplitMIO_sub_primal_right} \\
& & & y_{t, \ell} \geq 0, \quad \forall \ \ell \in \leaves(t). \label{prob:SplitMIO_sub_primal_nonnegative}
\end{alignat}%
\end{subequations}

\YCRF{We present a greedy algorithm for solving problem~\eqref{prob:SplitMIO_sub_primal}, with implementation details given in Section~\ref{subsec-appendix:greedy-implementation} of the appendix. Intuitively, the greedy algorithm progresses through the leaves from highest to lowest revenue, and sets the $y_{t,\ell}$ variable of each leaf $\ell$ to the highest value it can be set to without violating the left and right split constraints~\eqref{prob:SplitMIO_sub_primal_left} and \eqref{prob:SplitMIO_sub_primal_right} and without violating the constraint $\sum_{\ell \in \leaves(t)} y_{t,\ell} \leq 1$. At each iteration, the algorithm additionally keeps track of which constraint became tight through the event set $\Ecal$. An $A_s$ event indicates that the left split constraint~\eqref{prob:SplitMIO_sub_primal_left} for split $s$ became tight; a $B_s$ event indicates that the right split constraint~\eqref{prob:SplitMIO_sub_primal_right} for split $s$ became tight; and a $C$ event indicates that the constraint $\sum_{\ell \in \leaves(t)} y_{t,\ell} \leq 1$ became tight. When a $C$ event is not triggered, we look for the split which has the least remaining capacity. In the case that the $\arg \min$ is not unique and there are two or more splits that are tied, we break ties by choosing the split $s$ with the lowest depth. We also define a function $f$ to keep track of which leaf $\ell$ was being checked when an $A_s$ / $B_s$ / $C$ event occurred. We note that both $\Ecal$ and $f$ are not needed to find the primal solution, but they are essential to determining the dual solution in the dual procedure (Algorithm~\ref{algorithm:SplitMIO_dual_greedy}, which we will define shortly). We provide the implementation details of the greedy algorithm in Section~\ref{subsec-appendix:greedy-implementation}.}

The greedy algorithm returns a feasible solution that is an extreme point of the polyhedron defined in problem~\eqref{prob:SplitMIO_sub_primal}. We state the result formally as the following theorem.
\begin{theorem}
	Fix $t \in F$. Let $\yb_t$ be a solution to problem~\eqref{prob:SplitMIO_sub_primal} produced by the greedy algroithm (Section~\ref{subsec-appendix:greedy-implementation}). Then: (a) $\yb_t$ is a feasible solution to problem~\eqref{prob:SplitMIO_sub_primal}; and (b) $\yb_t$ is an extreme point of the feasible region of problem~\eqref{prob:SplitMIO_sub_primal}. 
	\label{theorem:SplitMIO_benders_primal_greedy_BFS}
\end{theorem}
Theorem~\ref{theorem:SplitMIO_benders_primal_greedy_BFS} implies that problem~\eqref{prob:SplitMIO_sub_primal} is feasible, and since the problem is bounded, it has a finite optimal value. By strong duality, the optimal value of problem~\eqref{prob:SplitMIO_sub_primal} is equal to the optimal value of its dual:
\begin{subequations}
\label{prob:SplitMIO_sub_dual}
\begin{alignat}{2}
& \underset{\alphab_t, \betab_t, \gamma_t}{\text{minimize}} & \quad & \sum_{s \in \splits(t)} x_{v(t,s)} \cdot \alpha_{t,s} + \sum_{s \in \splits(t)} (1 - x_{v(t,s)}) \beta_{t,s} + \gamma_t \\
& \text{subject to} & & \sum_{s: \ell \in \leftleaves(s)} \alpha_{t,s} + \sum_{s: \ell \in \rightleaves(s)} \beta_{t,s} + \gamma_t \geq r_{t,\ell}, \quad \forall \ \ell \in \leaves(t), \label{prob:SplitMIO_sub_dual_r} \\
& & & \alpha_{t,s} \geq 0, \quad \forall \ s \in \splits(t), \label{prob:SplitMIO_sub_dual_alphageqzero} \\
& & & \beta_{t,s} \geq 0, \quad \forall \ s \in \splits(t). \label{prob:SplitMIO_sub_dual_betageqzero} 
\end{alignat}
\end{subequations}
Letting $\Dcal_{t,\SplitMIO}$ denote the set of feasible solutions to the dual subproblem~\eqref{prob:SplitMIO_sub_dual}, we can formulate the master problem~\eqref{prob:SplitMIO_benders_master_abstract} as
\begin{subequations}
\label{prob:SplitMIO_benders_master_Dcal}
\begin{alignat}{2}
& \underset{\xb, \thetab}{\text{maximize}} & & \sum_{t \in F} \lambda_t \theta_t \\
& \text{subject to} & \quad & \theta_t \leq \sum_{s \in \splits(t)} x_{v(t,s)} \cdot \alpha_{t,s} + \sum_{s \in \splits(t)} (1 - x_{v(t,s)}) \beta_{t,s} + \gamma_t, \nonumber \\
& & & \quad  \forall \ (\alphab_t, \betab_t, \gamma_t) \in \Dcal_{t,\SplitMIO}, \label{prob:SplitMIO_benders_master_Dcal_mainconstraint} \\
& & & \xb \in [0,1]^n. 
\end{alignat}
\end{subequations}

The value of this formulation, relative to the original formulation, is that we have replaced the $(\yb_t)_{t \in F}$ variables and the constraints that link them to the $\xb$ variables, with a large family of constraints in terms of $\xb$. Although this new formulation is still challenging, the advantage of this formulation is that it is suited to constraint generation.

The constraint generation approach to solving problem~\eqref{prob:SplitMIO_benders_master_Dcal} involves starting the problem with no constraints and then, for each $t \in F$, checking whether constraint~\eqref{prob:SplitMIO_benders_master_Dcal_mainconstraint} is violated. If constraint~\eqref{prob:SplitMIO_benders_master_Dcal_mainconstraint} is not violated for any $t \in F$, then we conclude that the current solution $\xb$ is optimal. Otherwise, for any $t \in F$ such that constraint~\eqref{prob:SplitMIO_benders_master_Dcal_mainconstraint} is violated, we add the constraint corresponding to the $(\alphab_t, \betab_t, \gamma_t)$ solution at which the violation occurred, and solve the problem again to obtain a new $\xb$. The procedure then repeats at the new $\xb$ solution until no more violated constraints have been found.

The crucial step to solving this problem is being able to solve the dual subproblem~\eqref{prob:SplitMIO_sub_dual}; that is, for a fixed $t \in F$, either asserting that the current solution $\xb$ satisfies constraint~\eqref{prob:SplitMIO_benders_master_Dcal_mainconstraint} for all $(\alphab_t, \betab_t, \gamma_t) \in \Dcal_{t,\SplitMIO}$ or identifying a $(\alphab_t, \betab_t, \gamma_t)$ at which constraint~\eqref{prob:SplitMIO_benders_master_Dcal_mainconstraint} is violated. This amounts to solving the dual subproblem~\eqref{prob:SplitMIO_sub_dual} and comparing its objective value to $\theta_t$.

Fortunately, it turns out that we can solve the dual subproblem~\eqref{prob:SplitMIO_sub_dual} using a specialized algorithm, in the same way that we can solve the primal subproblem~\eqref{prob:SplitMIO_sub_primal} using the greedy algorithm. The following algorithm, Algorithm~\ref{algorithm:SplitMIO_dual_greedy}, uses auxiliary information obtained during the execution of the greedy algorithm. We use $d(s)$ to denote the depth of an arbitrary split, where the root split corresponds to a depth of 1, and $d_{\max} = \max_{s \in \splits(t)} d(s)$ is the depth of the deepest split in the tree. In addition, we use $\splits(t,d) = \{ s \in \splits(t) \mid d(s) = d\}$ to denote the set of all splits at a particular depth $d$. Algorithm~\ref{algorithm:SplitMIO_dual_greedy} proceeds sequentially from the root to the deepest splits, updating the dual variables $\alpha_{t,s}$ and $\beta_{t,s}$ for each split $s$ based on the tree topology and the auxiliary information $\Ecal$ and $f$ obtained from the greedy algorithm. When $A_s \in \Ecal$, constraint~\eqref{prob:SplitMIO_sub_primal_left} in the primal problem is tight, whereas constraint~\eqref{prob:SplitMIO_sub_primal_right} is not. This implies that the dual variable $\beta_{t,s}$, associated with the latter constraint, must remain zero. The other dual variable, $\alpha_{t,s}$, is then updated according to constraint~\eqref{prob:SplitMIO_sub_dual_r} in the dual problem. Specifically, recall that the auxiliary function $f$ in the greedy algorithm tracks the leaf node $\ell = f(A_s)$ with nonzero $y_{t,\ell}$ when $A_s$ occurs. Since the corresponding constraint for $\ell$ in constraint~\eqref{prob:SplitMIO_sub_dual_r} is tight, it is used to calibrate $\alpha_{t,s}$. A symmetric argument applies when $B_s \in \Ecal$ instead. We provide a worked example of the execution of the greedy algorithm and Algorithm\ref{algorithm:SplitMIO_dual_greedy} in Appendix~\ref{appendix:SplitMIO_benders_example}. 

\begin{algorithm}
\SingleSpacedXI
\small
\begin{algorithmic}[1]
\STATE Initialize $\alpha_{t,s} \gets 0, \beta_{t,s} \gets 0$ for all $s \in \splits(t)$, $\gamma_t \gets 0$
\STATE Set $\gamma \gets r_{f(C)}$
\FOR{ $d = 1,\dots, d_{\max}$ }
	\FOR{ $s \in \splits(t,d)$ }
		\IF{ $A_s \in \Ecal$}
			\STATE Set $\alpha_{t,s} \gets r_{t,f(A_s)} - \gamma_t - \sum_{\substack{s' \in \LS(f(A_s)):  A_{s'} \in \Ecal,  d(s') < d}} \alpha_{t,s'} - \sum_{\substack{s' \in \RS(f(A_s)):  B_{s'} \in \Ecal,  d(s') < d}} \beta_{t,s'}$
		\ENDIF
		\IF{ $B_s \in \Ecal$}
			\STATE Set $\beta_{t,s} \gets r_{t,f(B_s)} - \gamma_t - \sum_{\substack{s' \in \LS(f(A_s)):  A_{s'} \in \Ecal, d(s') < d}} \alpha_{t,s'} - \sum_{\substack{s' \in \RS(f(A_s)):  B_{s'} \in \Ecal,  d(s') < d}} \beta_{t,s'}$
		\ENDIF
	\ENDFOR
\ENDFOR
\end{algorithmic}

\caption{Dual greedy algorithm for \SplitMIO. \label{algorithm:SplitMIO_dual_greedy}}
\end{algorithm}

We can show that the dual solution produced by Algorithm~\ref{algorithm:SplitMIO_dual_greedy} is a feasible extreme point solution of the dual subproblem~\eqref{prob:SplitMIO_sub_dual}.
\begin{theorem}
	Fix $t \in F$. Let $(\alphab_t, \betab_t, \gamma_t)$ be a solution to problem~\eqref{prob:SplitMIO_sub_dual} produced by Algorithm~\ref{algorithm:SplitMIO_dual_greedy}. Then: (a) $(\alphab_t, \betab_t, \gamma_t)$ is a feasible solution to problem~\eqref{prob:SplitMIO_sub_dual}; and (b) $(\alphab_t, \betab_t, \gamma_t)$ is an extreme point of the feasible region of problem~\eqref{prob:SplitMIO_sub_dual}.
	\label{theorem:SplitMIO_benders_dual_greedy_BFS}
\end{theorem}

Lastly, and most importantly, we show that the solutions produced by the greedy algorithm and Algorithm \ref{algorithm:SplitMIO_dual_greedy} are optimal for their respective problems. Thus, Algorithm~\ref{algorithm:SplitMIO_dual_greedy} is a valid procedure for identifying values of $(\alphab_t, \betab_t, \gamma_t)$ at which constraint~\eqref{prob:SplitMIO_benders_master_Dcal_mainconstraint} is violated.

\begin{theorem}
	Fix $t \in F$. Let $\yb_t$ be a solution to problem~\eqref{prob:SplitMIO_sub_primal} produced by the greedy algorithm (Section~\ref{subsec-appendix:greedy-implementation}) and $(\alphab_t, \betab_t, \gamma_t)$ be a solution to problem~\eqref{prob:SplitMIO_sub_dual} produced by Algorithm~\ref{algorithm:SplitMIO_dual_greedy}. Then: (a) $\yb_t$ is an optimal solution to problem~\eqref{prob:SplitMIO_sub_primal}; and (b) $(\alphab_t, \betab_t, \gamma_t)$ is an optimal solution to problem~\eqref{prob:SplitMIO_sub_dual}. 
	\label{theorem:SplitMIO_benders_primal_dual_greedy_optimal}
\end{theorem}
The proof of this result proceeds by verifying that the two solutions satisfy complementary slackness. \YCRF{We note that the optimality of the greedy algorithm for solving the primal subproblem~\eqref{prob:SplitMIO_sub_primal} can also be interpreted through the lens of laminar capacity polytopes. In particular, for fixed $\xb$, the feasible region of problem~\eqref{prob:SplitMIO_sub_primal} is defined by capacity constraints over leaf subsets induced by rooted subtrees of $t$. Since these subtree leaf sets form a laminar family, the subproblem admits a natural laminar-capacity structure. Linear optimization over such structures often admits greedy solutions (see, e.g., \cite{schrijver2003combinatorial,korte2008combinatorial}). This observation provides additional structural intuition for why a greedy principle applies to the \SplitMIO relaxation.}

Before continuing, we pause to make two remarks. First, Algorithm~\ref{algorithm:SplitMIO_dual_greedy} can be viewed as the generalization of the algorithm arising in the Benders decomposition approach to the ranking-based assortment optimization problem \citep{bertsimas2019exact}. The results of that paper show that the primal subproblem of the MIO formulation in \cite{bertsimas2019exact} can be solved via a greedy algorithm and the dual subproblem can be solved via an algorithm that uses information from the primal algorithm (analogous to Algorithm~\ref{algorithm:SplitMIO_dual_greedy}). This generalization is not straightforward. The main challenge in this generalization is designing the sequence of updates in the greedy algorithm according to the tree topology. For example, in solving the primal subproblem~\eqref{prob:SplitMIO_sub_primal}, one considers all left/right splits and the $y$ values of their left/right leaves when constructing the lowest upper bound of $y_\ell$ for each leaf node $\ell$. Also, as shown in Algorithm~\ref{algorithm:SplitMIO_dual_greedy}, the dual variables $\alpha_{t,s}$ and $\beta_{t,s}$ have to be updated according to the tree topology and the events $A_{s'}$ and $B_{s'}$ of the split $s'$ with smaller depth. In contrast, in the ranking-based assortment optimization problem, one only needs to calculate the ``capacities'' (the $q_s$ values in Algorithm~\ref{algorithm:SplitMIO_primal_greedy} provided in Section~\ref{subsec-appendix:greedy-implementation}) by subtracting the $y$ values of the preceding products in the ranking, which does not inherit any topological structure. For these reasons, the primal and dual Benders subproblems for the decision forest assortment problem are much more challenging than those of the ranking-based assortment problem.

Second, while the dual problem~\eqref{prob:SplitMIO_sub_dual} can be solved directly using commercial software such as Gurobi \citep{gurobi}, the proposed greedy algorithm is generally more efficient. In Appendix~\ref{subsec:greedy_experiment}, we numerically demonstrate its efficiency, showing that when solving the relaxed \SplitMIO problem, the greedy algorithm can reduce runtime by up to 90\% compared to the approach where each constraint is added via solving the dual subproblem using Gurobi.

\subsection{Benders reformulation of the \ProductMIO relaxation} 
\label{subsec:benders_ProductMIO_relaxation}
We also consider a Benders reformulation of the linear relaxation of \ProductMIO. The Benders master problem is given by formulation~\eqref{prob:SplitMIO_benders_master_abstract} where the function $G_t(\xb)$ is defined as the optimal value of the \ProductMIO subproblem for tree $t$. To aid in the definition of the subproblem, let $P(t)$ denote the set of products that appear in the splits of tree $t$: $$P(t) = \{ i \in \Ncal \mid i = v(t,s)\ \text{for some}\ s \in \splits(t) \}.$$ With a slight abuse of notation, let $\leftleaves(i)$ denote the set of leaves for which product $i$ must be included in the assortment for those leaves to be reached, and similarly, let $\rightleaves(i)$ denote the set of leaves for which product $i$ must be excluded from the assortment for those leaves to be reached; formally, $$\leftleaves(i)  = \bigcup_{ \substack{s \in \splits(t): v(t,s) = i}} \leftleaves(s),  \qquad
\rightleaves(i)  = \bigcup_{\substack{s \in \splits(t): v(t,s) = i}} \rightleaves(s).$$
With these definitions, we can write down the \ProductMIO subproblem as follows:
\begin{subequations}
\label{prob:ProductMIO_sub_primal}
\begin{alignat}{2}
G_t(\xb) = \quad & \underset{\yb_t}{\text{maximize}} & \quad & \sum_{\ell \in \leaves(t)} r_{t,\ell} \cdot y_{t, \ell} \\
& \text{subject to} & & \sum_{\ell \in \leaves(t)} y_{t,\ell} = 1, \label{prob:ProductMIO_sub_primal_unitsum} \\
& & & \sum_{ \ell \in \leftleaves(i) } y_{t,\ell} \leq x_i, \quad \forall \ i \in P(t),  \label{prob:ProductMIO_sub_primal_left} \\
& & & \sum_{ \ell \in \rightleaves(i)} y_{t,\ell} \leq 1 - x_i, \quad \forall \ i \in P(t), \label{prob:ProductMIO_sub_primal_right} \\
& & & y_{t,\ell} \geq 0, \quad \forall \ \ell \in \leaves(t). \label{prob:ProductMIO_sub_primal_nonnegative}
\end{alignat}%
\end{subequations}
In the same way as \SplitMIO, one can consider solving problem~\eqref{prob:ProductMIO_sub_primal} using a greedy approach, where one iterates through the leaves from highest to lowest revenue, and sets each leaf's $y_{t,\ell}$ variable to the highest possible value without violating any of the constraints. %
Unlike \SplitMIO, it unfortunately turns out that this greedy approach is not always optimal. We formalize this claim as a proposition in Section~\ref{subsec:Bender_subproblem_productMIO_not_greedy_solvable} of the appendix, supported by a counterexample.

\subsection{Bender Cuts for Integer Master Solutions}
\label{subsec:bender_integral_solutions}

We further propose closed form expressions for the structure of the optimal primal and dual Benders subproblem solutions for integer solutions $\xb$.

\subsubsection{\SplitMIO}
\label{}

Our results in Section~\ref{subsec:benders_SplitMIO_relaxation} for obtaining primal and dual solutions for the subproblem of \SplitMIO apply for any $\xb \in [0,1]^n$; in particular, they apply for fractional choices of $\xb$, thus allowing us to solve the Benders reformulation of the relaxation of \SplitMIO. 

In the special case that $\xb$ is a candidate integer solution of \SplitMIO, we can find optimal solutions to the primal and dual subproblems of \SplitMIO in closed form.
\begin{theorem}
	Fix $t \in F$, and let $\xb \in \{0,1\}^n$. Define the primal subproblem solution $\yb_t$ as 
	\begin{equation*}
	y_{t,\ell} = \left\{ \begin{array}{ll} 1 & \text{if} \ \ell = \ell^*,\\
	0 & \text{if} \ \ell \neq \ell^*, \end{array} \right.
	\end{equation*}
	where $\ell^*$ denotes the leaf that the assortment encoded by $\xb$ is mapped to. Define the dual subproblem solution $(\alphab_t, \betab_t, \gamma_t)$ as
	\begin{align*}
	\alpha_{t,s} & = \left\{ \begin{array}{ll} \max\{ 0, \max_{\ell \in \leftleaves(s)} r_{t,\ell} - r_{t,\ell^*} \} & \text{if}\ s \in \RS(\ell^*), \\
	0 & \text{otherwise}, \end{array} \right. \\
	\beta_{t,s} & = \left\{ \begin{array}{ll} \max\{0, \max_{\ell \in \rightleaves(s)} r_{t,\ell} - r_{t,\ell^*}\} & \text{if} \ s \in \LS(\ell^*), \\
	0 & \text{otherwise}, \end{array} \right. \\
	\gamma_t & = r_{t,\ell^*}.
	\end{align*}
	Then: (a) $\yb_t$ is a feasible solution to problem~\eqref{prob:SplitMIO_sub_primal}; (b) $(\alphab_t, \betab_t, \gamma_t)$ is a feasible solution to problem~\eqref{prob:SplitMIO_sub_dual}; and (c) $\yb_t$ and $(\alphab_t, \betab_t, \gamma_t)$ are optimal for problems~\eqref{prob:SplitMIO_sub_primal} and \eqref{prob:SplitMIO_sub_dual}, respectively.
	\label{theorem:SplitMIO_benders_primal_dual_binary_closedform_optimal}
\end{theorem}

The significance of Theorem~\ref{theorem:SplitMIO_benders_primal_dual_binary_closedform_optimal} is that it provides a simpler means to checking for violated constraints when $\xb$ is binary than applying Algorithms~\ref{algorithm:SplitMIO_primal_greedy} and \ref{algorithm:SplitMIO_dual_greedy}. In particular, for the integer version of \SplitMIO, a similar derivation as in Section~\ref{subsec:benders_SplitMIO_relaxation} leads us to the following Benders reformulation of the integer problem for the \SplitMIO formulation:
\begin{subequations}
\label{prob:SplitMIO_benders_integer_master_Dcal}
\begin{alignat}{2}
& \underset{\xb, \thetab}{\text{maximize}} & & \sum_{t \in F} \lambda_t \theta_t \\
& \text{subject to} & \quad & \theta_t \leq \sum_{s \in \splits(t)} x_{v(t,s)} \cdot \alpha_{t,s} + \sum_{s \in \splits(t)} (1 - x_{v(t,s)}) \beta_{t,s} + \gamma_t, \nonumber \\
& & & \quad  \forall \ (\alphab_t, \betab_t, \gamma_t) \in \Dcal_{t,\SplitMIO}, \label{prob:SplitMIO_benders_integer_master_Dcal_mainconstraint} \\
& & & \xb \in \{0,1\}^n. 
\end{alignat}
\end{subequations}
To check whether constraint~\eqref{prob:SplitMIO_benders_integer_master_Dcal_mainconstraint} is violated for a particular $\xb$ and a tree $t$, we can simply use Theorem~\ref{theorem:SplitMIO_benders_primal_dual_binary_closedform_optimal} to determine the optimal value of the subproblem, and compare it against $\theta_t$; if the constraint corresponding to the dual solution of Theorem~\ref{theorem:SplitMIO_benders_primal_dual_binary_closedform_optimal} is violated, we add that constraint to the problem. In our implementation of Benders decomposition, we embed the constraint generation process for the integer problem~\eqref{prob:SplitMIO_benders_integer_master_Dcal} within the branch-and-bound tree, using a technique referred to as \emph{lazy constraint generation}; we discuss this more in Section~\ref{subsec:benders_overall_approach}.

\subsubsection{\ProductMIO}

We also consider \ProductMIO. We begin by writing down the dual of the subproblem, for which we need to define several additional sets. We let $\LP(\ell)$ denote the set of ``left products'' of leaf $\ell$ (those products that must be included in the assortment for leaf $\ell$ to be reached), and let $\RP(\ell)$ denote the set of ``right products'' of leaf $\ell$ (those products that must be excluded from the assortment for leaf $\ell$ to be reached). Note that $\ell \in \leftleaves(i)$ if and only if $i \in \LP(\ell)$, and similarly $\ell \in \rightleaves(i)$ if and only if $i \in \RP(\ell)$. 

With these definitions, the dual of the primal subproblem~\eqref{prob:ProductMIO_sub_primal} is 
\begin{subequations}
	\label{prob:ProductMIO_sub_dual}
	\begin{alignat}{2}
	& \underset{\alphab_t, \betab_t, \gamma_t}{\text{minimize}} & \quad & \sum_{i \in P(t)} \alpha_{t,i} x_i + \sum_{i \in P(t)} \beta_{t,i} (1 - x_i) + \gamma_t \\
	& \text{subject to} & & \sum_{i \in \LP(\ell)} \alpha_{t,i} + \sum_{i \in \RP(\ell)} \beta_{t,i} + \gamma_t \geq r_{\ell}, \quad \forall \ \ell \in \leaves(t), \label{prob:ProductMIO_sub_dual_r} \\
	& & & \alpha_{t,i} \geq 0, \quad \forall\ i \in P(t), \label{prob:ProductMIO_sub_dual_alphageqzero} \\
	& & & \beta_{t,i} \geq 0, \quad \forall\ i \in P(t). \label{prob:ProductMIO_sub_dual_betageqzero}
	\end{alignat}%
\end{subequations}

In the case that $\xb$ is integer, we can obtain optimal solutions to the primal subproblem~\eqref{prob:ProductMIO_sub_primal} and its dual \eqref{prob:ProductMIO_sub_dual} in closed form. 

\begin{theorem}
	Fix $t \in F$ and let $\xb \in \{0,1\}^n$. Let $\yb_t$ be defined as in Theorem~\ref{theorem:SplitMIO_benders_primal_dual_binary_closedform_optimal} 
	and let $(\alphab_t, \betab_t, \gamma_t)$ be defined as 
	\begin{align}
	\alpha_{t,i} & = \left\{ \begin{array}{ll} \max\{0, \max_{\ell \in \leftleaves(i)} r_{t,\ell} - r_{t,\ell^*} \}, & \text{if}\ i \in \RP(\ell^*), \\
	0 & \text{otherwise}, \end{array} \right. \\
	\beta_{t,i} & = \left\{ \begin{array}{ll} \max\{0, \max_{\ell \in \rightleaves(i)} r_{t,\ell} - r_{t,\ell^*} \}, & \text{if}\ i \in \LP(\ell^*), \\
	0 & \text{otherwise}, \end{array} \right. \\
	\gamma_t & = r_{t,\ell^*}.
	\end{align}
	Then: (a) $\yb_t$ is a feasible solution for problem~\eqref{prob:ProductMIO_sub_primal}; (b) $(\alphab_t, \betab_t, \gamma_t)$ is a feasible solution for problem~\eqref{prob:ProductMIO_sub_dual}; and (c) $\yb_t$ and $(\alphab_t, \betab_t, \gamma_t)$ are optimal for problems~\eqref{prob:ProductMIO_sub_primal} and \eqref{prob:ProductMIO_sub_dual}, respectively.
	\label{theorem:ProductMIO_benders_primal_dual_binary_closedform_optimal}
\end{theorem}

\subsection{Overall Benders algorithm}
\label{subsec:benders_overall_approach}

We conclude Section~\ref{sec:benders} by summarizing how the results are used. In our overall algorithmic approach below, we first focus on \SplitMIO, as the subproblem can be solved for the formulation when $\xb$ is either fractional or binary. 
\begin{enumerate}
\item \emph{Relaxation phase}. We first solve the relaxed problem~\eqref{prob:SplitMIO_benders_master_Dcal} using ordinary constraint generation. Given a solution $\xb \in [0,1]^n$, we generate Benders cuts by running the primal-dual procedure (Algorithm~\ref{algorithm:SplitMIO_primal_greedy} followed by Algorithm~\ref{algorithm:SplitMIO_dual_greedy}). 
\item \emph{Integer phase}. In the integer phase, we add all of the Benders cuts generated in the relaxation phase to the integer version of problem~\eqref{prob:SplitMIO_benders_master_Dcal}. We then solve the problem as an integer optimization problem, where we generate Benders cuts for integer solutions using the closed form expressions in Theorem~\ref{theorem:SplitMIO_benders_primal_dual_binary_closedform_optimal}. We add these cuts using \emph{lazy constraint generation}. That is, we solve the master problem using a single branch-and-bound tree, and we check whether the main constraint~\eqref{prob:SplitMIO_benders_master_Dcal_mainconstraint} of the Benders formulation is violated at every integer solution generated in the branch-and-bound tree.
\end{enumerate}

For \ProductMIO, as the subproblem can only be solved when $\xb$ is binary, its overall Benders algorithm directly starts with the \emph{Integer Phase}.

We conclude this section by highlighting the advantages of the \SplitMIO formulation in the large-scale regime. At first glance, following Section~\ref{sec:model}, \ProductMIO may appear preferable to \SplitMIO since it is a tighter formulation (Proposition~\ref{proposition:ProductMIO_stronger_than_SplitMIO}) and involves no more constraints. However, in large-scale settings, the proposed primal-dual greedy algorithm enables significantly faster computation of the linear relaxation for \SplitMIO compared to \ProductMIO. As a result, as outlined above, \SplitMIO can benefit from a more efficient warm start during the relaxation phase. 

To support the discussion surrounding the two-phase structure and the advantages afforded by \SplitMIO through the greedy algorithm, we provide two pieces of numerical evidence. \YCRR{First, Appendix~\ref{subsec:value_of_relaxation_phase} shows that incorporating the relaxation phase can effectively reduce the optimality gap of the Benders approach, especially for large instances.} Second, Appendix~\ref{subsec:greedy_experiment} demonstrates that, with the greedy algorithm, solving the relaxed \SplitMIO problem can be up to 95\% faster than solving the relaxed \ProductMIO while providing comparable upper bounds.

\section{Numerical Experiments with Synthetic Data}
\label{sec:synthetic}

In this section, we examine the formulation strength of \SplitMIO and \ProductMIO (Section~\ref{subsec:synthetic_T_integrality_gap}) and demonstrate the scalability of the Benders decomposition approach (Section~\ref{subsec:synthetic_T_B}). We focus on synthetically generated instances so that we can scale the problem size. In Appendix~\ref{appendix:additional_numerical_results}, we report our implementation details and include additional numerical results.

\subsection{Background}

\label{subsec:synthetic_background}

To test our method, we generate three different families of synthetic decision forest instances, which differ in the topology of the trees and the products that appear in the splits:
\begin{enumerate}
\item \textbf{T1 instances}. A T1 forest consists of balanced trees of depth $d$ (i.e., trees where all leaves are at depth $d+1$). For each tree, we sample $d$ products $i_1,\dots, i_d$ uniformly without replacement from $\Ncal$. Then, for every depth $d' \in \{1,\dots, d\}$, we set the split product $v(t,s)$ as $v(t,s) = i_{d'}$ for every split $s$ that is at depth $d'$. 
\item \textbf{T2 instances}. A T2 forest consists of balanced trees of depth $d$. For each tree, we set the split products at each split iteratively, starting at the root, in the following manner: (a) Initialize $d' = 1$; (b) For all splits $s$ at depth $d'$, set $v(s,t) = i_s$ where $i_s$ is drawn uniformly at random from the set $\Ncal \setminus \cup_{s' \in A(s)} \{ v(t, s') \}$, where $A(s)$ is the set of ancestor splits to split $s$ (i.e., all splits appearing on the path from the root node to split $s$); (c) Increment $d' \gets d' + 1$; and (d) If $d' > d$, stop; otherwise, return to Step (b). 

\item \textbf{T3 instances}. A T3 forest consists of unbalanced trees with $L$ leaves. Each tree is generated according to the following iterative procedure: (a) Initialize $t$ to a tree consisting of a single leaf; (b) Select a leaf $\ell$ uniformly at random from $\leaves(t)$, and replace it with a split $s$ and two child leaves $\ell_1, \ell_2$. For split $s$, set $v(s,t) = i_s$ where $i_s$ is drawn uniformly at random from $\Ncal \setminus \cup_{s' \in A(s)} \{ v(t, s') \}$; (c) If $|\leaves(t)| = L$, terminate; otherwise, return to Step (b).
\end{enumerate}

For all three types of forests, we generate the purchase decision $c(t,\ell)$ for each leaf $\ell$ in each tree $t$ in the following way: for each leaf $\ell$, we uniformly at random choose a product $i \in \cup_{s \in \LS(\ell)} \{ v(t,s) \} \cup \{ 0 \}$. In words, the purchase decision is chosen to be consistent with the products that are known to be in the assortment if leaf $\ell$ is reached. Figure~\ref{figure:example_T1_T2_T3} shows an example of each type of tree (T1, T2, and T3). Given a forest of any of the three types above, we generate the customer type probability vector $\lambdab = (\lambda_t)_{t \in F}$ by drawing it uniformly from the $(|F|-1)-$dimensional unit simplex. 

\begin{figure}
\begin{center}
\begin{subfigure}[t]{0.3\textwidth}
\centering
\includegraphics[height=3.5cm]{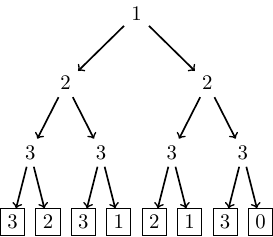}
\caption{T1 tree ($d = 3$).}
\end{subfigure}
\quad
\begin{subfigure}[t]{0.3\textwidth}
\centering
\includegraphics[height=3.5cm]{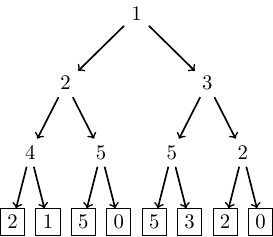}
\caption{T2 tree ($d = 3$).}
\end{subfigure}
\quad
\begin{subfigure}[t]{0.3\textwidth}
\centering
\includegraphics[height=4.5cm]{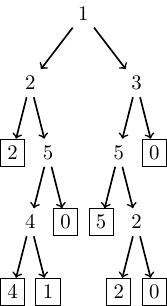}
\caption{T3 tree ($L = 8$).}
\end{subfigure}
\end{center}

\caption{Examples of T1, T2 and T3 trees. \label{figure:example_T1_T2_T3}}
\end{figure}

In our experiments, we fix the number of products $n = 100$ and vary the number of trees $|F| \in \{50, 100, 200, 500\}$, and the number of leaves $|\leaves(t)| \in \{8, 16, 32, 64\}$. (Note that the chosen values for $|\leaves(t)|$ correspond to depths of $\{3,4,5,6\}$ for the T1 and T2 instances.) For each combination of $n$, $|F|$ and $|\leaves(t)|$ and each type of instance (T1, T2 and T3) we randomly generate 20 problem instances, where a problem instance consists of a decision forest model and the product marginal revenues $\rho_1,\dots, \rho_n$. For each instance, the decision forest model is generated according to the process described above and the product revenues are sampled uniformly with replacement from the set $\{1,\dots, 100\}$.

\subsection{Experiment \#1: Formulation Strength}
\label{subsec:synthetic_T_integrality_gap}

Our first experiment is to simply understand how the two formulations --  \SplitMIO and \ProductMIO -- compare in terms of formulation strength. Recall from Proposition \ref{proposition:ProductMIO_stronger_than_SplitMIO} that \ProductMIO is at least as strong as \SplitMIO. For a given instance and a given formulation $\Mcal$ (one of \SplitMIO and \ProductMIO), we define the integrality gap $G^\text{int}_{\Mcal} \equiv 100\% \times \left({Z_{\Mcal} - Z^*}\right) / {Z^*}$, where $Z^*$ is the optimal objective value of the integer problem and $Z_\Mcal$ is the optimal objective of the LO relaxation. We consider the T1, T2 and T3 instances with $n = 100$, $|F| \in \{ 50, 100, 200, 500 \}$ and $|\leaves(t)| = 8$. We restrict our focus to instances with $n = 100$ and $|\leaves(t)| = 8$, as the optimal value $Z^*$ of the integer problem could be computed within one hour for these instances. 

\begin{table}[]
	\centering
	\footnotesize
	\begin{tabular}{rrrlrrrlrrr} \toprule
		\multicolumn{3}{c}{Type T1}                                       &  & \multicolumn{3}{c}{Type T2}                                       &  & \multicolumn{3}{c}{Type T3}                                       \\ \cmidrule{1-3} \cmidrule{5-7} \cmidrule{9-11}
		$|F|$ & $G^\text{int}_{\SplitMIO}$ & $G^\text{int}_{\ProductMIO}$ &  & $|F|$ & $G^\text{int}_{\SplitMIO}$ & $G^\text{int}_{\ProductMIO}$ &  & $|F|$ & $G^\text{int}_{\SplitMIO}$ & $G^\text{int}_{\ProductMIO}$ \\ \cmidrule{1-3} \cmidrule{5-7} \cmidrule{9-11} 
		50    & 0.9                        & 0.0                          &  & 50    & 0.2                        & 0.2                          &  & 50    & 0.2                        & 0.2                          \\
		100   & 2.5                        & 0.1                          &  & 100   & 1.0                        & 1.0                          &  & 100   & 0.5                        & 0.5                          \\
		200   & 5.6                        & 0.2                          &  & 200   & 5.4                        & 5.3                          &  & 200   & 4.1                        & 3.9                          \\
		500   & 15.8                       & 3.3                          &  & 500   & 16.7                       & 16.4                         &  & 500   & 14.2                       & 14.0     \\                   \bottomrule
	\end{tabular}
\caption{Average integrality gap of \SplitMIO and \ProductMIO for T1, T2 and T3 instances. \label{table:synthetic_T_integrality_gap}}
\end{table}

Table~\ref{table:synthetic_T_integrality_gap} displays the average integrality gap of each of the two formulations for each combination of $n$ and $|F|$ and each instance type. From this table, we can see that the integrality gap of both \SplitMIO and \ProductMIO is in general about 0 to 17\%. Note that the difference between \ProductMIO and \SplitMIO is most pronounced for the T1 instances, as the decision forests in these instances exhibit the highest degree of repetition of products within the splits of a tree. In contrast, the difference is much smaller for the T2 and T3 instances, where there is less repetition of products within the splits of the tree (as the trees are not forced to have the same product appear on all of the splits at a particular depth). These results align with the motivation for \ProductMIO. Note also that when a tree does not have any repeated products in its splits, the corresponding constraints of \ProductMIO and \SplitMIO are the same, and \ProductMIO does not confer an advantage over \SplitMIO. Thus, as one constructs trees with less and less repetition in the splits, the improvement in the integrality gap of \ProductMIO over \SplitMIO should become smaller, with the two formulations having the same gap in the most extreme case where no product is repeated.

We also test the tractability of \SplitMIO and \ProductMIO when they are solved as integer programs (i.e., not as \YCRR{a} relaxation). We present the results in Appendix~\ref{subsec:synthetic_T_optimality_gap_T1_T2}. 

\YCRR{
We note that the experiments in this section (Experiment \#1) and in Appendix~\ref{subsec:synthetic_T_optimality_gap_T1_T2} focus on instances where the full formulations of \SplitMIO and \ProductMIO fit in memory and can be solved directly by Gurobi. In this setting, both Table~\ref{table:synthetic_T_integrality_gap} and Table~\ref{table:synthetic_T_optimality_gap} in Appendix~\ref{subsec:synthetic_T_optimality_gap_T1_T2} show that \ProductMIO is generally preferable to \SplitMIO. In particular, when trees are highly structured with extreme product repetition in split nodes, as in the T1 instances, \ProductMIO exhibits a much smaller integrality gap and, in some cases, is solved to optimality by Gurobi up to five times faster than \SplitMIO. When the trees are less repetitive, as in the T2 and T3 instances, the two formulations perform comparably in terms of both integrality gap and runtime. In the next experiment, we further scale up the problem size to enter a regime where directly solving the full formulations of \SplitMIO and \ProductMIO with Gurobi is no longer feasible.
}

\subsection{Experiment \#2: Benders Decomposition for Large-Scale Problems}
\label{subsec:synthetic_T_B}

In this experiment, we report on the performance of our Benders decomposition approach for solving large scale instances of \SplitMIO, \YCRR{where solving the full extended formulation may no longer be viable. These problem instances are significantly larger than those considered in the previous experiment.} We focus on the \SplitMIO formulation, as we are able to efficiently generate Benders cuts for both fractional and integral values of $\xb$. \YCRR{Two further experiments in this large-scale regime are presented in the Appendix. Appendix~\ref{subsec:value_of_relaxation_phase} highlights the value of the Benders cuts generated from fractional values of $\xb$ within the full Benders decomposition framework. Appendix~\ref{subsec:greedy_experiment} compares \SplitMIO and \ProductMIO, demonstrating that, with the proposed primal–dual greedy algorithm in Section~\ref{subsec:benders_SplitMIO_relaxation}, the linear relaxation of \SplitMIO can be solved up to 95\% faster than that of \ProductMIO.}

First, \YCRR{we scale up the problem sizes considered in Experiment \#1.} Specifically, we generate a collection of T3 instances with $n \in \{200, 500, 1000, 2000, 3000\}$, $|F| = 500$ trees and $|\leaves(t)| = 512$ leaves. 
As before, the marginal revenue $\rho_i$ of each product $i$ is chosen uniformly at random from $\{1,\dots,100\}$. For each value of $n$, we generate 5 instances. For each instance, we solve the \SplitMIO problem subject to the constraint $\sum_{i=1}^n x_i = b$, where $b$ is set as $b = \mu n$ and we vary $\mu \in \{0.02, 0.04, 0.06, 0.08, 0.10, 0.12\}$. 

We compare three different methods: the two-phase Benders method described in Section~\ref{subsec:benders_overall_approach}, using the \SplitMIO cut results (Section~\ref{subsec:benders_SplitMIO_relaxation} and Section~\ref{subsec:bender_integral_solutions}); the divide-and-conquer (D\&C) heuristic; and the direct solution approach, where we attempt to directly solve the full \SplitMIO formulation using Gurobi \citep{gurobi}. The D\&C heuristic is a type of local search heuristic proposed in the product line design literature (see \citealt{green1993conjoint}; see also \citealt{belloni2008optimizing}). In this heuristic, one iterates through the $b$ products currently in the assortment, and replaces a single product with the product outside of the assortment that leads to the highest improvement in the expected revenue; this process repeats until the assortment can no longer be improved. We choose the initial assortment uniformly at random from the collection of assortments of size $b$. For each instance, we repeat the D\&C heuristic 10 times, and retain the best solution. We do not impose a time limit on the D\&C heuristic. For the Benders approach, we impose a time limit of one hour on the LO phase, and impose a time limit of one hour on the integer phase. For the direct solution approach, we impose a time limit of two hours, in order to be comparable to the Benders approach.

Table~\ref{table:synthetic_B_T6} reports the performance of the three methods -- the Benders approach, the D\&C heuristic and direct solution of \SplitMIO -- across all combinations of $n$ and $\mu$. We consider several metrics. The metric $G_{\Mcal}$ is defined as $G_\Mcal = (Z' - Z_\Mcal)/Z' \times 100\%$, i.e., it is the optimality gap of the solution obtained by method $\Mcal$ -- either the Benders approach, the direct approach or the D\&C heuristic -- relative to the objective value of the best solution obtained out of the three methods, which is indicated by $Z'$. Lower values of $G_\Mcal$ indicate that the approach tends to deliver solutions that are close to being the best out of the three methods, and a value of 0\% implies that the solution obtained by the approach is the best (or tied for the best) out of the three methods. The metric $T_\Mcal$ indicates the solution time required for approach $\Mcal$ in seconds. Finally, for the Benders and direct approaches, we compute the optimality gap $O_{\Mcal}$, which is defined as $O_{\Mcal} = (Z_{\text{UB}, \Mcal} - Z_{\text{LB}, \Mcal})/Z_{\text{UB}, \Mcal} \times 100\%$, where $Z_{\text{UB},\Mcal}$ and $Z_{\text{LB}, \Mcal}$ are the best upper and lower bounds, respectively, obtained from either the Benders or direct approach after the computation time limit is exhausted. The value reported of each metric is the average over the five replications corresponding to the particular $(n, \mu)$ combination.

\begin{table}
\centering
\footnotesize
\begin{tabular}{lllrrrrrrrr} \toprule
$n$ & $\mu$ & $b$ & $G_{\Benders}$ & $G_{\DNC}$ & $G_{\Direct}$ & $T_{\Benders}$ & $T_{\DNC}$ & $T_{\Direct}$ & $O_{\Benders}$ & $O_{\Direct}$ \\ \midrule
200 & 0.02 &   4 & 0.00 & 0.00 & 0.00 & 14.54 & 2.86 & 2060.81 & 0.00 & 0.00 \\ 
  200 & 0.04 &   8 & 0.00 & 0.04 & 0.61 & 187.96 & 4.46 & 7015.65 & 0.00 & 7.72 \\ 
  200 & 0.06 &  12 & 0.11 & 0.00 & 0.77 & 3615.82 & 7.34 & 7200.25 & 8.23 & 16.71 \\ 
  200 & 0.08 &  16 & 0.70 & 0.00 & 2.86 & 3612.43 & 9.97 & 7200.47 & 17.38 & 23.39 \\ 
  200 & 0.10 &  20 & 0.47 & 0.01 & 6.94 & 3613.19 & 15.07 & 7200.16 & 22.13 & 30.38 \\ 
  200 & 0.12 &  24 & 0.67 & 0.12 & 7.68 & 3614.96 & 18.93 & 7200.25 & 27.27 & 34.03 \\ \midrule
  500 & 0.02 &  10 & 0.00 & 0.19 & 0.00 & 16.71 & 11.24 & 184.29 & 0.00 & 0.00 \\ 
  500 & 0.04 &  20 & 0.00 & 0.20 & 0.08 & 1640.78 & 28.93 & 7200.21 & 0.48 & 3.06 \\ 
  500 & 0.06 &  30 & 0.02 & 0.68 & 3.57 & 3630.19 & 65.75 & 7200.26 & 8.10 & 11.89 \\ 
  500 & 0.08 &  40 & 0.00 & 0.48 & 1.05 & 3623.51 & 115.36 & 7200.57 & 14.26 & 14.82 \\ 
  500 & 0.10 &  50 & 0.00 & 1.49 & 6.88 & 3625.89 & 177.26 & 7200.18 & 18.26 & 23.74 \\ 
  500 & 0.12 &  60 & 0.69 & 0.95 & 4.11 & 3631.14 & 223.61 & 7200.38 & 22.94 & 25.30 \\ \midrule
  1000 & 0.02 &  20 & 0.00 & 0.29 & 0.00 & 18.83 & 47.62 & 156.49 & 0.00 & 0.00 \\ 
  1000 & 0.04 &  40 & 0.00 & 1.65 & 2.76 & 2823.19 & 183.33 & 7200.18 & 1.05 & 4.10 \\ 
  1000 & 0.06 &  60 & 0.00 & 2.48 & 6.61 & 3671.12 & 397.35 & 7200.18 & 6.16 & 12.53 \\ 
  1000 & 0.08 &  80 & 0.00 & 2.92 & 7.86 & 3662.76 & 687.02 & 7200.44 & 10.07 & 17.39 \\ 
  1000 & 0.10 & 100 & 0.00 & 2.31 & 8.65 & 3670.95 & 1052.76 & 7200.17 & 13.60 & 20.99 \\ 
  1000 & 0.12 & 120 & 0.00 & 2.13 & 5.92 & 3696.24 & 1552.76 & 7200.22 & 15.77 & 20.73 \\ \midrule
  2000 & 0.02 &  40 & 0.00 & 1.36 & 0.00 & 14.55 & 328.25 & 70.02 & 0.00 & 0.00 \\ 
  2000 & 0.04 &  80 & 0.00 & 3.49 & 0.00 & 1774.85 & 1259.21 & 1057.28 & 0.21 & 0.00 \\ 
  2000 & 0.06 & 120 & 0.00 & 4.26 & 21.38 & 3774.67 & 2578.07 & 7200.16 & 2.42 & 23.29 \\ 
  2000 & 0.08 & 160 & 0.00 & 4.63 & 100.00 & 3806.42 & 4431.39 & 7200.28 & 5.21 & 100.00 \\ 
  2000 & 0.10 & 200 & 0.00 & 5.23 & 100.00 & 3925.28 & 6435.67 & 7200.17 & 7.16 & 100.00 \\ 
  2000 & 0.12 & 240 & 0.74 & 0.94 & 100.00 & 4319.62 & 10949.83 & 7200.16 & 11.91 & 100.00 \\ \midrule
  3000 & 0.02 &  60 & 0.00 & 1.97 & 0.00 & 16.16 & 923.12 & 32.40 & 0.00 & 0.00 \\ 
  3000 & 0.04 & 120 & 0.00 & 4.03 & 0.00 & 1883.80 & 3365.35 & 1541.40 & 0.08 & 0.00 \\ 
  3000 & 0.06 & 180 & 0.00 & 5.26 & 0.04 & 4144.41 & 7620.89 & 7200.16 & 1.81 & 1.80 \\ 
  3000 & 0.08 & 240 & 0.00 & 4.55 & 99.98 & 4554.74 & 13624.07 & 7209.41 & 4.23 & 99.99 \\ 
  3000 & 0.10 & 300 & 0.00 & 4.04 & 99.98 & 5013.31 & 31137.18 & 7200.38 & 6.17 & 99.98 \\ 
  3000 & 0.12 & 360 & 0.32 & 1.12 & 99.98 & 6999.60 & 43173.66 & 7216.75 & 9.83 & 99.99 \\ \bottomrule
\end{tabular}
\caption{Comparison of the Benders decomposition approach, the D\&C heuristic and direct solution of \SplitMIO in terms of solution quality, computation time and optimality gap. 
\label{table:synthetic_B_T6} }
\end{table}

Comparing the performance of the Benders approach with the D\&C heuristic, we can see that in general, the Benders approach is able to find better solutions than the D\&C heuristic. In particular, for larger instances, $G_{\Benders}$ is lower than $G_{\DNC}$ (for example, with $n = 2000$, $\mu = 0.08$, the Benders solution is in general the best one, and the D\&C solution has an expected revenue that is 4.6\% worse). In addition, from a computation time standpoint, the Benders approach compares quite favorably to the D\&C heuristic. While the D\&C heuristic is faster for small problems with low $n$ and/or low $\mu$, it can require a significant amount of time for $n = 2000$ or $n = 3000$. In addition to this comparison against the D\&C heuristic, in Appendix~\ref{subsec:synthetic_T_vs_H}, we also provide a comparison of the MIO solutions for the smaller T1, T2 and T3 instances used in the previous two sections against three other heuristic solutions; in those instances, we similarly find that solutions obtained from our MIO formulations can be significantly better than heuristic solutions. 

Comparing the performance of the Benders approach with the direct solution approach, our results indicate two types of behavior. The first type of behavior corresponds to ``easy'' instances. These are instances with $\mu \in \{0.02, 0.04\}$ for which it is sometimes possible to directly solve \SplitMIO to optimality within the two hour time limit. For example, with $n = 2000$ and $\mu = 0.04$, all five instances are solved to optimality by the direct approach. For those instances, the Benders approach is either able to prove optimality (for example, for $n = 200$ and $\mu = 0.04$, $O_{\Benders} = 0\%$) or terminate with a low optimality gap (for example, for $n = 3000$ and $\mu = 0.04$, $O_{\Benders} = 0.08\%$); among all instances with $\mu \in \{0.02, 0.04\}$, the average optimality gap is no more than about 1.05\%. More importantly, the solution obtained by the Benders approach is at least as good as the solution obtained after two hours of direct solution of \SplitMIO, which can be seen from the fact that $G_{\Benders}$ is 0.0\% in all of these $(n,\mu)$ combinations.

The second type of behavior corresponds to ``hard'' instances, which are the instances with $\mu \in \{0.06, 0.08, 0.10, 0.12\}$. For these instances, Gurobi generally struggles to solve the LO relaxation of \SplitMIO within the two hour time limit. When this happens, the integer solution returned by Gurobi is obtained from applying heuristics before solving the root node of the branch-and-bound tree, which is often quite suboptimal. (This often results in integer solutions with an objective value of 0.0, leading to values of $O_{\Direct}$ and $G_{\Direct}$ that are close to 100\%.) In contrast, the Benders approach delivers significantly better solutions; among all of these instances, $G_{\Benders}$ is within 1\%, indicating that on average the Benders approach is within 1\% of the best solution of each instance. In addition, $O_{\Benders}$ is generally lower than $O_{\Direct}$, indicating that the Benders approach in general makes more progress towards proving optimality than the direct approach. 

Overall, these results suggest that our Benders approach can deliver high quality solutions to large-scale instances of the assortment optimization problem in a reasonable computational timeframe that are at least as good, and often significantly better, than those obtained by the D\&C heuristic or those obtained by directly solving the problem using Gurobi.

\section{Numerical Experiments with Real Transaction Data}
\label{sec:IRI}

We examine the performance of the optimal assortment generated by the decision forest model on problem instances calibrated with a real-world transaction dataset. Different from Section~\ref{sec:synthetic}, the problem instances in this section are of a smaller scale, which help us glean operational insights.

\subsection{Background}
\label{subsec:IRI_background}
We consider the IRI Academic Dataset \citep{bronnenberg2008database}, which is comprised of real-world transaction records of store sales for thirty product categories from forty-seven U.S. markets. The same dataset has been used by \cite{jagabathula2019limit} to empirically demonstrate the loss of rationality in consumer choice and by \cite{chen2019decision} to evaluate the predictive performance of the decision forest model. 

We follow the literature to pre-process the raw transaction data. We first aggregate items with the same vendor code as a product, a common pre-processing technique in the marketing science community \citep{bronnenberg2004market,nijs2007retail}. Following the setup in the literature \citep{jagabathula2019limit,chen2019decision} and focusing on the data from the first two weeks of the calendar year 2007 due to the data volume, we select the top nine purchased items as the products and combine the remaining items as the outside/no-purchase option. We further transform the transactions into assortment-choice pairs as follows. We first call $\Tcal$ the set of transactions. For each transaction $\tau \in \Tcal$, we have the following information: the week of the purchase $\tau_\text{week}$, the store where the transaction happened $\tau_\text{store}$, the sold product $\tau_{\text{prod}}$, and the selling price $\tau_{\text{price}}$. Let $\Wcal$ and $\Zcal$ be the nonrepeated collection of $\{ \tau_\text{week} \}_{\tau \in \Tcal}$ and $\{ \tau_\text{store} \}_{\tau \in \Tcal}$, respectively. For each week $w \in \Wcal$ and store $s \in \Zcal$, we define $S_{w,s} =  \bigcup_{\tau \in \Tcal}  \{  \tau_\text{prod} \mid \tau_\text{week} = w , \tau_\text{store} = s   \}$ as the set of products that was purchased at least once at store $s$ during week $w$. The set of transactions $\Tcal$ is thus transformed into the set of assortment-choice pairs as $\{ \left( S_{\tau_\text{week},\tau_\text{store}} , \tau_\text{prod} \right) \}_{\tau \in \Tcal}$, which will be used for choice model estimation. We further define the marginal revenue $\rho_i$ as the average of the historical prices $ \left(   \tau_{\text{price}} \right)_{\tau \in \Tcal: \tau_{\text{prod}} = i}$ for $i \in \Ncal$.

\subsection{Results: Improvement in Expected Revenue}
\label{subsec:IRI_improvement}

We estimate both the ranking-based model and the decision forest model from the assortment-choice pairs $\{ \left( S_{t_\text{week},t_\text{store}} , t_\text{prod} \right) \}_{t \in \Tcal}$ in each product category by maximum likelihood estimation. For details on estimating the ranking-based model, see \cite{van2014market,van2017expectation}, and for the decision forest model, refer to \cite{chen2019decision}. In particular, we follow \cite{chen2019decision} to warm start the solver by setting the initial solution as the estimated ranking-based model.  For simplicity, we set the tree depth limit as three (i.e., leaves can have depth at most four). Typically, the tree depth is determined by cross validation. \cite{chen2019decision} have reported that trees of depth three lead to good predictive performance, while deeper trees tend to overfit.

We further denote $S^{\text{RM}}$ and $S^{\text{DF}}$ as the optimal assortments under the estimated ranking-based model and the decision forest model, respectively. Note that we obtain $S^{\text{RM}}$ using an integer programming approach \citep{bertsimas2019exact,feldman2019assortment}. We use $Z^*_{{\Ccal}}$ to denote the maximal expected revenue under a given choice model $\Ccal$ and $Z_{\Ccal}(S)$ to denote the expected revenue of assortment $S$. Obviously, $Z^*_{\text{RM}} = Z_{\text{RM}}(S^{\text{RM}})$ and $Z^*_{\text{DF}} = Z_{\text{DF}}(S^{\text{DF}})$. To discuss the relative performance of assortments $S^{\text{RM}}$ and $S^{\text{DF}}$, we define the following two metrics: $
RI_{\text{DF}} = 100 \% \times  \frac{Z_{\text{DF}}(S^{\text{DF}}) - Z_{\text{DF}}(S^{\text{RM}})}{Z_{\text{DF}}(S^{\text{RM}})}$ and $RI_{\text{RM}} = 100 \% \times  \frac{Z_{\text{RM}}(S^{\text{RM}}) - Z_{\text{RM}}(S^{\text{DF}})}{Z_{\text{RM}}(S^{\text{DF}})}$. 
The two metrics measure the relative improvement of the optimal assortment $S^{\text{DF}}$ (or $S^{\text{RM}}$) from $S^{\text{RM}}$ (or $S^{\text{DF}}$) under the decision forest model (or the ranking-based model). Note that both metrics are non-negative by their definitions. We also define the Hamming distance $\Delta_{\text{H}} =  \sum_{i=1}^{n} |  \mathbb{I} \left[  i \in  S^{\text{DF}}  \right] - \mathbb{I} \left[  i \in  S^{\text{RM}}  \right]  |$ to measure how different the two assortments are. 

Table~\ref{table:IRI} summarizes the comparison of $S^{\text{DF}}$ and $S^{\text{RM}}$ in terms of expected revenue under the measures $RI_{\text{DF}} $ and $RI_{\text{RM}} $. When the ground truth is the estimated decision forest model, the assortment $S^{\text{DF}}$ can outperform $S^{\text{RM}}$ up to $32\%$ and with $7\%$ improvement on average in the expected revenue.  Meanwhile, when the ground truth is the estimated ranking-based model, the assortment $S^{\text{DF}}$ only performs $3\%$ worse than the optimal assortment $S^{\text{RM}} $ on average. Recall that the class of the ranking-based models is equivalent to the class of choice models of random utility maximization (RUM) principle, or, the class of rational choice models \citep{jagabathula2019limit}. Table~\ref{table:IRI} suggests that the optimal assortment generated by the decision forest model can be quite beneficial when customer choice deviates from the rational choice theory, and does not lose much even if customers are strictly rational. We also remark that on average, the two assortments $S^{\text{DF}}$ and $S^{\text{RM}}$ only differ from each other with Hamming distance $1.9$, while they have similar sizes: the average sizes of $S^{\text{DF}}$ and  $S^{\text{RM}}$ are 4.9 and 5.3, respectively. 

Note that, in this real-data setting, the problem instances involve significantly fewer products compared to the synthetic-data setting described in Section~\ref{sec:synthetic}. As a result, the optimal assortments $S^{\text{DF}}$ and $S^{\text{RM}}$ are obtained almost instantaneously (in less than one second) using the integer programming approach.

{
	\begin{table}
		\centering
		\footnotesize
		\begin{tabular}{lrrrllrrr}
			\toprule
			Category             & $RI_\text{DF}$ & $RI_\text{RM}$ & $\Delta_{\text{H}}$ &  & Category                  & $RI_\text{DF}$ & $RI_\text{RM}$ & $\Delta_{\text{H}}$ \\ \cmidrule{1-4} \cmidrule{6-9}
			Beer                 & 32.8           & 5.9            & 3                   &  & Mayonnaise                & 0.0            & 0.0            & 0                   \\
			Blades               & 0.0            & 0.0            & 0                   &  & Milk                      & 5.2            & 0.7            & 1                   \\
			Carbonated Beverages & 4.8            & 3.8            & 4                   &  & Mustard / Ketchup         & 5.6            & 1.1            & 1                   \\
			Cigarettes           & 2.5            & 8.3            & 1                   &  & Paper Towels              & 1.0            & 2.4            & 2                   \\
			Coffee               & 26.3           & 11.0           & 4                   &  & Peanut Butter             & 10.9           & 5.6            & 3                   \\
			Cold Cereal          & 6.1            & 0.3            & 2                   &  & Photo                     & 0.0            & 0.0            & 0                   \\
			Deodorant            & 3.7            & 2.2            & 3                   &  & Salty Snacks              & 12.0           & 3.7            & 2                   \\
			Diapers              & 0.0            & 0.0            & 0                   &  & Shampoo                   & 19.0           & 1.4            & 4                   \\
			Facial Tissue        & 0.2            & 0.6            & 1                   &  & Soup                      & 13.0           & 5.0            & 3                   \\
			Frozen Dinners       & 2.7            & 5.2            & 2                   &  & Spaghetti / Italian Sauce & 4.9            & 3.3            & 2                   \\
			Frozen Pizza         & 10.9           & 4.0            & 4                   &  & Sugar Substitutes         & 10.7           & 6.0            & 1                   \\
			Household Cleaners   & 16.0           & 6.6            & 4                   &  & Toilet Tissue             & 0.0            & 0.0            & 0                   \\
			Hotdogs              & 18.7           & 1.4            & 4                   &  & Toothbrush                & 0.0            & 0.0            & 0                   \\
			Laundry Detergent    & 0.1            & 1.9            & 1                   &  & Toothpaste                & 2.7            & 0.9            & 2                   \\
			Margarine / Butter   & 5.3            & 12.9           & 1                   &  & Yogurt                    & 3.8            & 2.1            & 1                   \\ \midrule 
			&                &                &                     &  & Average over all categories                 & 7.3            & 3.2            & 1.9    \\  \bottomrule         
		\end{tabular}
		\caption{The relative performance of the assortments generated by the decision forest model and the ranking-based model in each product category in the IRI Academic Dataset}
		\label{table:IRI}
	\end{table}
}

\subsection{Case Study: Beer Category}
\label{subsec:beer}

To investigate why we observed high relative improvement with the assortment $S^{\text{DF}}$ over $S^{\text{RM}}$ in some categories, we look into the Beer category, where the relative improvement is 32.8\%.  

We first observe that the consumer choice within the Beer category exhibits a notable phenomenon known as \emph{choice overload}, a well-documented behavioral anomaly in the field of marketing science \citep{iyengar2000choice,chernev2015choice,long2023choice}. This phenomenon describes the situation in which an abundance of available products can actually deter customers from making a purchase. Choice overload \YCRR{contradicts} the principles of rational choice theory. According to this theory, when a store increases the assortment size, a rational consumer should theoretically be more inclined to make a purchase from the assortment. This is because a larger assortment would increase the chance that the customer finds one product that aligns with her preference. In the context of choice modeling, a rational choice model satisfies the following:
\begin{align}
	\label{eq:no_purchase_regularity}
	P( 0 \mid S) \geq P(0 \mid S') \quad \text{ whenever } \quad S \subseteq S'.
\end{align}
Note that \YCRR{Condition~\eqref{eq:no_purchase_regularity}} directly stems from the \emph{regularity property}, which the ranking-based model and other RUM choice models, such as the mixed-MNL model, always adhere to \citep{rieskamp2006extending,jagabathula2019limit}. However, real-world scenarios often deviate from \YCRR{Condition~\eqref{eq:no_purchase_regularity}}. When provided with more options, customers may become fatigued from the search process or experience a decrease in their confidence in decision-making, leading them to opt not to make a purchase. In the consumer psychology literature, \cite{iyengar2000choice} conducted a field experiment demonstrating that individuals are more likely to purchase gourmet jams or chocolates when presented with a smaller assortment of size six rather than a more extensive assortment of size 24 or 30.

\begin{figure}
	\begin{center}
		\begin{subfigure}[t]{0.47\textwidth}
			\centering
			\includegraphics[width=1\textwidth]{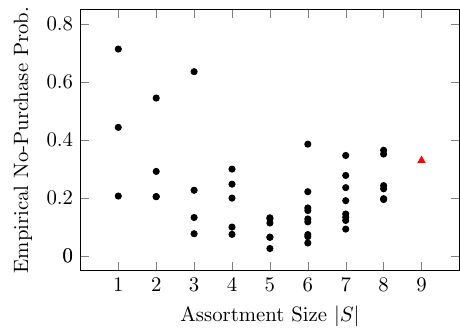}
		\end{subfigure}
		\qquad
		\begin{subfigure}[t]{0.45\textwidth}
			\centering
			\includegraphics{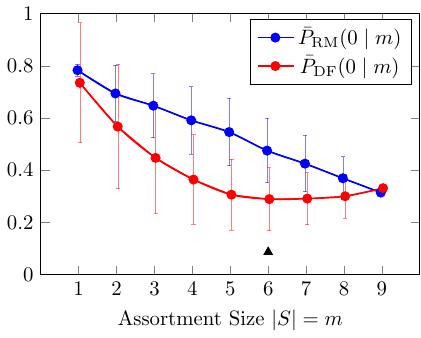}
		\end{subfigure}
	\end{center}
	\caption{(left) The choice overload effect in the Beer category. Each scatter point represents the empirical no-purchase probability $\bar{P}( 0 \mid S)$ for an historical assortment $S \in \mathcal{S}_\text{hist}$; (right) The average no-purchase probability of the estimated ranking-based and decision forest models (with the error bars as the standard deviation). The black triangle represents that $P_\text{DF}(0 \mid S^\text{DF}) = 8.6\%$
	} \label{figure:beer_choice_overload_two_figures}
\end{figure}

Figure~\ref{figure:beer_choice_overload_two_figures} (left) visually depicts the prevalence of choice overload within the Beer category of the IRI dataset. Each data point in the figure corresponds to a pair ($|S|$, $\bar{P}(0\mid S)$), where $|S|$ represents the assortment size of a historical assortment $S \in \Scal_\text{hist} \equiv \{ S_{w,s} \mid w \in \Wcal, s \in \Scal \}$, and $\bar{P}(0\mid S)$ represents the empirical probability of the no-purchase option being chosen when the assortment $S$ was offered. For a better illustration, we have excluded assortments with very few transactions (less than or equal to 10) from Figure~\ref{figure:beer_choice_overload_two_figures} (left). In this figure, the rightmost data point, denoted by a red triangle, corresponds to the no-purchase probability when the assortment contains all available products ($|S| = 9$). It is important to note that in a scenario where \YCRR{Condition~\eqref{eq:no_purchase_regularity}} holds, indicating the absence of choice overload, each data point should exhibit a higher $y$-value than the red triangle. However, this condition is met for only 8 out of the 41 data points in the figure, suggesting a violation of \YCRR{Condition~\eqref{eq:no_purchase_regularity}} within the Beer category.

The presence of choice overload in the data has a significant impact on the estimation outcomes of both the ranking-based model and the decision forest model, leading to divergent assortment decisions. To begin, we define $\bar{P}_\Ccal (0 \mid m)$ as the average no-purchase probability when presented with an assortment of size $m$; formally, %
$\bar{P}_\Ccal (0 \mid m) = { \sum_{S \subseteq \Ncal \,: \, |S| = m} P_{\Ccal} ( 0 \mid S)   } \big\slash { | \{  S \subseteq \Ncal \, :\, | S | = m  \}    |  }$, where $P_\Ccal( 0 \mid S)$ represents the no-purchase probability given assortment $S$ under a choice model $\Ccal$. We consider two choice models, the decision forest model (DF) and the ranking-based model (RM), both estimated from data. Figure~\ref{figure:beer_choice_overload_two_figures} (right) presents the average no-purchase probability curves, $\bar{P}_\Ccal (0 \mid m)$, for these two models, with the error bars representing the standard deviation. Recall that the ranking-based model adheres to the regularity property and strictly follows \YCRR{Condition~\eqref{eq:no_purchase_regularity}}. Consequently, the resulting no-purchase probability curve (depicted in blue) does not exhibit the choice overload effect; it consistently decreases as we expand the assortment size. In contrast, the decision forest model is not bound by the regularity property and has the capacity to learn the choice overload effect from the data. The resulting no-purchase probability curve (shown in red) indicates that customers are indeed more inclined to make a purchase as the assortment initially expands. However, after reaching an assortment size of $|S| = 6$, the choice overload effect becomes evident. Customers become less motivated, and we observe an increase in the no-purchase probability when $|S| \geq 7$.

The extent to which the decision forest model and the ranking-based model capture the phenomenon of choice overload from the data significantly influences their downstream assortment decisions. It is important to note that within the dataset, the marginal revenues $\rho_i$ of the products exhibit low variation, ranging between $8.14$ and $10.83$ USD (keeping in mind that beer is often sold in packs of six). Since prices are quite uniform and well above zero, the optimal strategy is \emph{not} about making customers choose a specific product but rather about making them buy \emph{any} product from the offered assortment, i.e., reducing the no-purchase probability.

For the ranking-based model, the optimal assortment $S^\text{RM}$ includes all products, resulting in $|S^\text{RM}| = 9$, since this is the only way it can minimize the no-purchase probability. In contrast, the decision forest model, having captured the choice overload effect from the data, takes a different approach. It examines assortments of size six and identifies one that decreases the no-purchase probability from ${P}_{\text{DF}}( 0 \mid S^\text{RM}) = 33.1\%$ to ${P}_{\text{DF}}( 0 \mid S^\text{DF}) = 8.6\%$ (see the black triangle in Figure~\ref{figure:beer_choice_overload_two_figures} (right)). This strategic choice results in a remarkable 32\% improvement in expected revenue, measured according to the decision forest model. This highlights the advantage of the assortment planning approach proposed in this paper: by leveraging the decision forest model's ability to capture consumer choice patterns, businesses can tailor their product offerings to effectively capitalize on consumers' departures from strictly rational purchasing behavior.

\YCRR{
\subsection{Additional Numerical Experiments based on the Sushi Dataset}
\label{subsec:sushi_dataset_intro_mainbody}

The IRI dataset provides valuable real-world transaction data, but it also comes with limitations. First, it does not specify a ground-truth choice model, so the evaluation of assortment performance must rely on comparisons across models, as in Table~\ref{table:IRI}. Second, even after standard preprocessing steps from the literature, the historical assortments and no-purchase options are not always fully consistent across all assortment–choice pairs. To address these limitations, we complement the IRI-based analysis with a semi-synthetic experiment using the Sushi dataset \citep{kamishima2003nantonac}, presented in Section~\ref{appendix:additional_numerical_results_real_data}.
}

\YCRR{In this experiment, we construct a ground-truth choice model over ten sushi types ($n=10$) that combines (i) the ranking-based preferences in the Sushi dataset and (ii) a synthetic choice overload effect, where customers increasingly opt for no purchase as assortments expand. We use this ground truth to generate synthetic assortment–choice data, from which we then estimate the MNL, ranking-based, and decision forest models. Let $S^{\text{MNL}}$, $S^{\text{RB}}$, and $S^{\text{DF}}$ denote the optimal assortments recommended by the respective estimated models. We then evaluate the expected revenues of these assortments under the synthetic ground truth.}

\YCRR{The results show that the decision forest model ($S^{\text{DF}}$) consistently delivers strong revenue performance, even when the choice overload effect is intensified. By contrast, the revenues of $S^{\text{MNL}}$ and $S^{\text{RB}}$ degrade significantly under stronger overload. When the overload effect is mild, the ranking-based model performs best---as expected, since the ground truth is nearly ranking-based in this case. However, as overload increases, the decision forest model significantly outperforms both benchmarks.}

\YCRR{This semi-synthetic experiment highlights the end-to-end (data-to-decision) performance of the decision forest model in the presence of a specific behavioral anomaly. At the same time, it isolates only one anomaly---choice overload---whereas real-world settings often exhibit multiple behavioral patterns simultaneously, such as the decoy effect \citep{huber1982adding} and compromise effect \citep{simonson1989choice}. In such cases, the IRI dataset study in Section~\ref{subsec:IRI_improvement} provides a broader perspective on how the decision forest model compares with rational-choice models such as the ranking-based model. Taken together, the IRI and Sushi experiments complement each other: the former grounds our analysis in real data with rich behavioral heterogeneity, while the latter offers a controlled environment with a known ground truth. Together, they provide a comprehensive assessment of the decision forest model.}

\section{Conclusions}
\label{sec:conclusions}

We developed a mixed-integer optimization methodology for solving the assortment optimization problem under the decision forest model, which accounts for both rational and non-rational customer behaviors. We first established the inapproximability of the problem and then presented two mixed-integer programs. To enhance scalability, we proposed a large-scale solution approach based on Benders decomposition. Through synthetic data experiments, we demonstrated the efficiency of our methods, and with real-world data, we showed how consumer behavioral anomalies can be learned and leveraged within our framework.

\section*{Acknowledgments}
We sincerely thank the department editor Huseyin Topaloglu, the associate editor and two anonymous reviewers for their thoughtful comments that have helped to significantly improve the paper. The authors gratefully acknowledge Information Resources Inc. (IRI) and the authors of \cite{bronnenberg2008database} for the IRI Academic Data Set that was used in Section~\ref{sec:IRI}. %
This work has benefited from the generous research support provided by the UCL School of Management and the UCLA Anderson School of Management.

\makeatletter
\newcommand*\mysize{%
  \@setfontsize\mysize{10.0}{9.0}%
}
\makeatother

\renewcommand{\bibfont}{\mysize}
{\setlength{\bibsep}{3pt}
\bibliographystyle{plainnat}
\bibliography{aodf_literature.bib}
}

\newpage

\begin{center}
  {\large Electronic Companion:\\ Assortment Optimization under the Decision Forest Model}\\
  (Yi-Chun Akchen and Velibor V. Mi\v{s}i\'{c})
\end{center}

\vspace{0.5em}

This electronic companion covers Sections~\ref{subsec-appendix:greedy-implementation}--\ref{appendix:additional_numerical_results_real_data}, which contain additional numerical analyses and implementation details. Theoretical results, including omitted proofs (Section~\ref{sec:proofs}) and further analysis (Section~\ref{sec:one_tree_analysis}), are not included in this electronic companion due to page limits. These materials are instead available at \url{https://arxiv.org/abs/2103.14067}.

\vspace{0.5em}

\begin{APPENDICES}

\section{Solving the \SplitMIO Relaxation via a Greedy Algorithm}
\label{subsec-appendix:greedy-implementation}

As noted in Section~\ref{subsec:benders_SplitMIO_relaxation}, each primal subproblem~\eqref{prob:SplitMIO_sub_primal} of the \SplitMIO relaxation can be solved using a greedy algorithm. While the algorithm was briefly outlined in Section~\ref{subsec:benders_SplitMIO_relaxation}, we provide additional implementation details here, particularly regarding the events $A_s$, $B_s$, and $C$, through the following pseudocode (Algorthm~\ref{algorithm:SplitMIO_primal_greedy}).

First,  we define an ordering $\tau$ of the leaves in nondecreasing revenue. In particular, we require a bijection $\tau: \{1,\dots, |\leaves(t)|\} \to \leaves(t)$ such that $r_{t,\tau(1)} \geq r_{t,\tau(2)} \geq \dots \geq r_{t,\tau(|\leaves(t)| \YCR{ )} }$, i.e., an ordering of leaves in nondecreasing revenue. In addition, in the definition of Algorithm~\ref{algorithm:SplitMIO_primal_greedy}, we use $\LS(\ell)$ and $\RS(\ell)$ to denote the sets of left and right splits of $\ell$, respectively, which are defined as
\begin{align*}
\LS(\ell)  = \{ s \in \splits(t) \mid \ell \in \leftleaves(s) \} \quad \text{and} \quad \RS(\ell)  = \{ s \in \splits(t) \mid \ell \in \rightleaves(s) \}.
\end{align*}
In words, $\LS(\ell)$ is the set of splits for which we must proceed to the left in order to be able to reach $\ell$, and $\RS(\ell)$ is the set of splits for which we must proceed to the right to reach $\ell$. A split $s \in \LS(\ell)$ if and only if $\ell \in \leftleaves(s)$, and similarly, $s \in \RS(\ell)$ if and only if $\ell \in \rightleaves(s)$.

The algorithm processes the leaves in decreasing order of revenue and sets the variable $y_{t,\ell}$ for each leaf $\ell$ to the largest value that does not violate the left and right split constraints~\eqref{prob:SplitMIO_sub_primal_left} and \eqref{prob:SplitMIO_sub_primal_right}, nor the constraint $\sum_{\ell \in \leaves(t)} y_{t,\ell} \leq 1$. At each iteration, the algorithm also records which constraint becomes tight by maintaining the event set $\Ecal$. An $A_s$ event indicates that the left split constraint~\eqref{prob:SplitMIO_sub_primal_left} for split $s$ has become tight; a $B_s$ event indicates that the right split constraint~\eqref{prob:SplitMIO_sub_primal_right} for split $s$ has become tight; and a $C$ event indicates that the constraint $\sum_{\ell \in \leaves(t)} y_{t,\ell} \leq 1$ has become tight. If a $C$ event is not triggered, Algorithm~\ref{algorithm:SplitMIO_primal_greedy} identifies the split with the smallest remaining capacity (line~15). If the $\arg\min$ is not unique and multiple splits are tied, ties are broken by selecting the split $s$ with the smallest depth $d(s)$ (i.e., the split closest to the root node of the tree).

The function $f$ records which leaf $\ell$ is being processed when an $A_s$, $B_s$, or $C$ event occurs. Although neither $\Ecal$ nor $f$ is required to compute the primal solution, they are essential for determining the dual solution in the dual procedure (Algorithm~\ref{algorithm:SplitMIO_dual_greedy}).

\begin{algorithm}
\SingleSpacedXI
\small
\begin{algorithmic}[1]
\REQUIRE Bijection $\tau: \{1,\dots, | \leaves(t)| \} \to \leaves(t)$ such that $r_{t, \tau(1)} \geq r_{t,\tau(2)} \geq \dots \geq r_{t, \tau( |\leaves(t)|)}$
\STATE Initialize $y_{t,\ell} \gets 0$ for each $\ell \in \leaves(t)$. 
\FOR{ $i = 1, \dots, |\leaves(t)|$}
	\STATE Set $q_C \gets 1 - \sum_{j=1}^{i-1} y_{t, \tau(j)}$.
	\FOR{$s \in \LS(\tau(i))$}
		\STATE Set $q_s \gets x_{v(t,s)} - \sum_{1 \leq j \leq i-1: \tau(j) \in \leftleaves(s) } y_{t, \tau(j)} $
	\ENDFOR
	\FOR{$s \in \RS(\tau(i))$}
		\STATE Set $q_s \gets 1 - x_{v(t,s)} -  \sum_{1 \leq j \leq i-1 :  \tau(j) \in \rightleaves(s)} y_{t, \tau(j)} $
	\ENDFOR
	\STATE Set $q_{A,B} \gets \min_{s \in \LS(\tau(i)) \cup \RS(\tau(i)) } q_s$
	\STATE Set $q^* \gets \min\{ q_C, q_{A,B} \}$
	\STATE Set $y_{t,\tau(i)} \gets q^*$
	\IF{ $q^* = q_{C}$ }
		\STATE Set $\Ecal \gets \Ecal \cup \{ C \}$.
		\STATE Set $f(C) = \tau(i)$.
	\ELSE
		\STATE Set $s^* \gets \arg \min_{s \in \LS(\tau(i)) \cup \RS(\tau(i))} q_s$
		\IF{ $s^* \in \LS(\tau(i))$ }
			\STATE Set $e = A_{s^*}$
		\ELSE
			\STATE Set $e = B_{s^*}$
		\ENDIF
		\IF{$e \notin \Ecal$}
			\STATE Set $\Ecal \gets \Ecal \cup \{e\}$.
			\STATE Set $f(e) = \tau(i)$.
		\ENDIF
	\ENDIF
\ENDFOR
\end{algorithmic}

\caption{Primal greedy algorithm for \SplitMIO. 
\label{algorithm:SplitMIO_primal_greedy}
}
\end{algorithm}

\section{Example of Benders algorithms for \SplitMIO}
\label{appendix:SplitMIO_benders_example}

In this section, we provide a small example of the primal-dual procedure (Algorithms~\ref{algorithm:SplitMIO_primal_greedy} and \ref{algorithm:SplitMIO_dual_greedy}) for solving the \SplitMIO subproblem. Suppose that $n = 6$, and that $\xb = (x_1, \dots, x_6) = (0.62, 0.45, 0.32, 0.86, 0.05, 0.35)$. Suppose that $\boldsymbol{\rho} = (\rho_1, \dots, \rho_6) = (97, 72, 89, 50, 100, 68)$. Suppose that the purchase decision tree $t$ has the form given in Figure~\ref{figure:SplitMIO_primal_dual_tree_example_products}; in addition, suppose that the splits and leaves are indexed as in Figure~\ref{figure:SplitMIO_primal_dual_tree_example_enumeration}. For example, in the latter case, 8 corresponds to the split node that is furthest to the bottom and to the left, while 30 corresponds to the second leaf from the right. 

\begin{figure}[h!]
	\centering
	\begin{subfigure}[t]{0.44\textwidth}
		\includegraphics[width=1\textwidth]{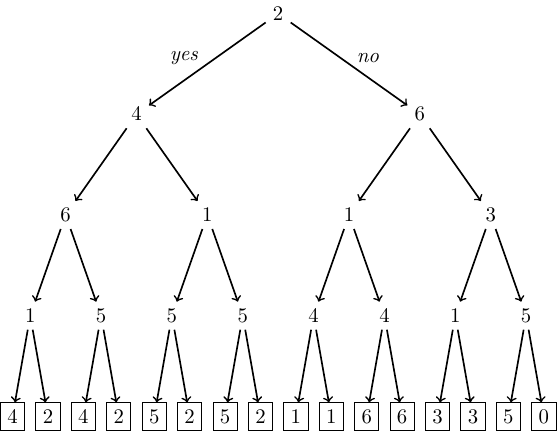}
		\caption{Purchase decision tree. \label{figure:SplitMIO_primal_dual_tree_example_products} }
	\end{subfigure}
	\hspace{0.05\textwidth}
	\begin{subfigure}[t]{0.48\textwidth}
		\includegraphics[width=1\textwidth]{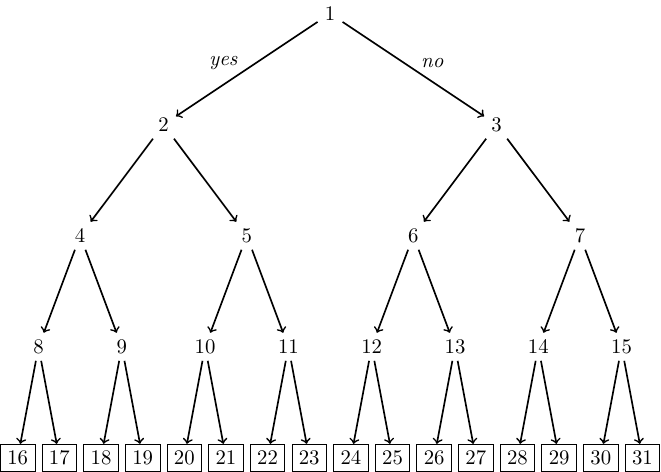}
		\caption{Indexing of nodes in tree. \label{figure:SplitMIO_primal_dual_tree_example_enumeration}}
	\end{subfigure}
	
	\caption{Tree used in example of \SplitMIO primal-dual algorithms. \label{figure:SplitMIO_primal_dual_tree_example}}
	
\end{figure}

We first run Algorithm~\ref{algorithm:SplitMIO_primal_greedy} on the problem, which carries out the steps shown below in Table~\ref{table:SplitMIO_primal_greedy_example}. For this execution of the procedure, we assume that the following ordering of leaves (encoded by $\tau)$ is used:
\begin{equation*}
	20, 22, 30, 24, 25, 28, 29, 17, 19, 21, 23, 26, 27, 16, 18, 31.
\end{equation*}

\begin{table}
	\centering
	\makebox[0pt][c]{\parbox{1.00\textwidth}{%
			\begin{minipage}[b]{0.495\hsize}
				\centering
				\footnotesize
				\begin{tabular}{lll} \toprule
					Iteration & Values of $q_{C}$ and $q_{A,B}$ & Steps \\ \midrule
					$\ell = 20$ & $q_C = 1.0$, $q_{A,B} = 0.05$ & Set $y_{20} \gets 0.05$ \\
					& & $A_{10}$ event \\[0.5em]
					$\ell = 22$ & $q_C = 0.95$, $q_{A,B} = 0.05$ & Set $y_{22} \gets 0.05$ \\
					& & $A_{11}$ event \\[0.5em]
					$\ell = 30$ & $q_C = 0.90$, $q_{A,B} = 0.05$ & Set $y_{30} \gets 0.05$ \\
					& & $A_{15}$ event \\[0.5em]
					$\ell = 24$ & $q_C = 0.85$, $q_{A,B} = 0.35$ & Set $y_{24} \gets 0.35$ \\
					& & $A_{3}$ event \\[0.5em]
					$\ell = 25$ & $q_C = 0.5$, $q_{A,B} = 0.0$ & Set $y_{25} \gets 0.0$ \\[0.5em]
					$\ell = 28$ & $q_C = 0.5$, $q_{A,B} = 0.15$ & Set $y_{28} \gets 0.15$ \\
					& & $B_{1}$ event \\[0.5em]
					$\ell = 29$ & $q_C = 0.35$, $q_{A,B} = 0.0$ & Set $y_{29} \gets 0.0$ \\[0.5em]
					$\ell = 17$ & $q_C = 0.35$, $q_{A,B} = 0.35$ & Set $y_{17} \gets 0.35$ \\
					& & $C$ event \\
					& & \textbf{break} \\ \bottomrule
				\end{tabular}
				\caption{Steps of primal procedure (Algorithm~\ref{algorithm:SplitMIO_primal_greedy}). \label{table:SplitMIO_primal_greedy_example} }
			\end{minipage}
			\begin{minipage}[b]{0.495\hsize}
				\centering
				\footnotesize
				\begin{tabular}{ll} \toprule
					Phase & Calculation \\ \midrule
					Initialization & $\alpha_s \gets 0$, $\beta_s \gets 0$ for all $s$ \\
					Set $\gamma$ & $\gamma \gets r_{17} = 72$ \\
					Loop: $d = 1$ & $\beta_{1} \gets r_{28} - \gamma = 89 - 72 = 17 $ \\
					Loop: $d = 2$ & $\alpha_{3} \gets r_{24} - \gamma - \beta_{1} = 97 - 72 - 17 = 8$ \\
					Loop: $d = 4$ & $\alpha_{10} \gets r_{20} - \gamma = 100 - 72 = 28 $ \\ 
					& $\alpha_{11} \gets r_{22} - \gamma = 100 - 72 = 28 $ \\ 
					& $\alpha_{15} \gets r_{30} - \gamma - \beta_{1} = 100 - 72 - 11 = 11$ \\  \bottomrule
				\end{tabular}
				\caption{Steps of dual procedure (Algorithm~\ref{algorithm:SplitMIO_dual_greedy}). \label{table:SplitMIO_dual_greedy_example} }
			\end{minipage}%
	}}
\end{table}

After running the procedure, the primal solution $\yb = (y_{16},\ldots,y_{31})$ satisfies that $y_{17} = y_{24} = 0.35$, $y_{28} = 0.15$, $y_{20} = y_{22} = y_{30} = 0.05$ and all other components are zero. Here we drop the index $t$ to simplify the notation. The event set is $\Ecal = \{ A_{10}, A_{11}, A_{15}, A_{3}, B_{1}, C \}$, and the function $f: \Ecal \to \leaves$ is defined as $f(A_{10}) = 20$, $f(A_{11}) = 22$, $f(A_{15}) = 30$, $f(A_{3}) = 24$, $f(B_{1}) = 28$, and $f(C) = 17$. 

We now run Algorithm~\ref{algorithm:SplitMIO_dual_greedy}, which carries out the steps shown in Table~\ref{table:SplitMIO_dual_greedy_example}. %
We obtain a dual solution $(\alphab, \betab, \gamma)$, where $\gamma = 72$, $\alpha_{10} = \alpha_{11} = 28$, $\alpha_{15} = 11$, $\beta_1 = 17$, and all other components of $\alphab = (\alpha_1,\ldots,\alpha_{15})$ and $\betab = (\beta_1,\ldots,\beta_{15})$ are zero.

The feasibility of the dual solution is visualized in Figure~\ref{figure:SplitMIO_benders_dual_viz}. The colored bars correspond to the different dual variables; a colored bar appears multiple times when the variable participates in multiple dual constraints. The height of the black lines for each leaf indicates the value of $r_{\ell}$, while the total height of the colored bars at a leaf corresponds to the value $\gamma + \sum_{s \in \LS(\ell)} \alpha_s + \sum_{s \in \RS(\ell)} \beta_s$ (the left hand side of the dual constraint~\eqref{prob:SplitMIO_sub_dual_r}). For each leaf, the total height of the colored bars exceeds the black line, which indicates that all dual constraints are satisfied.

The objective value of the primal solution is $\sum_{\ell \in \leaves } r_\ell \cdot y_\ell = r_{20} \times y_{20} + r_{22} \times y_{22} + r_{30} \times y_{30} + r_{24} \times y_{24} + r_{28} \times y_{28} + r_{17} \times y_{17}  = 87.5$. The objective value of the dual solution is $ \gamma + (1 - x_{2}) \times \beta_{1} + x_{6} \times \alpha_{3} + x_{5} \times \alpha_{10} + x_{5} \times \alpha_{11} + x_{5} \times \alpha_{15} = 72.0 + 0.55 \times 17 + 0.35 \times 8 + 0.05 \times 28 + 0.05 \times 28 + 0.05 \times 11 = 87.5$.

\begin{figure}
	\centering
	\includegraphics[width=0.6\textwidth]{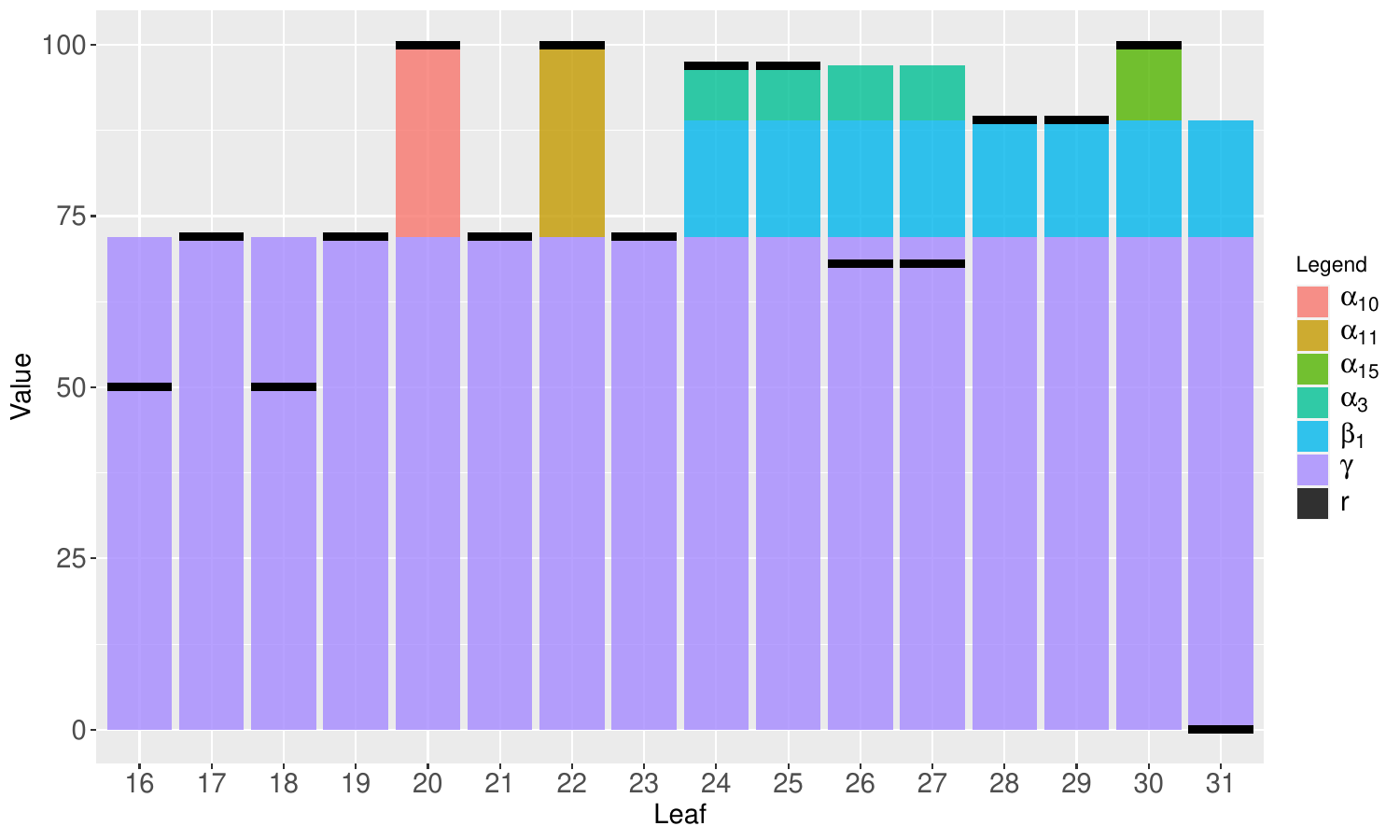}
	\caption{Visualization of feasibility of dual solution. \label{figure:SplitMIO_benders_dual_viz}}
\end{figure}

\section{Benders subproblem of the \ProductMIO relaxation}
\label{subsec:Bender_subproblem_productMIO_not_greedy_solvable}

We continue the discussion in Section~\ref{subsec:benders_ProductMIO_relaxation} and show that Problem~\eqref{prob:ProductMIO_sub_primal} cannot be solved with a greedy approach. We formalize this claim as the following proposition.

\begin{proposition}
	There exists an $\xb \in [0,1]^n$, a tree $t$ and revenues $\rho_1, \dots, \rho_n$ for which the greedy solution to problem~\eqref{prob:ProductMIO_sub_primal} is not optimal. 
	\label{proposition:ProductMIO_benders_primal_greedy_not_optimal}
\end{proposition}

We prove this proposition by providing a counterexample. Consider an instance where $\Ncal = \{1,2,3\}$ and $\xb = (0.5,0.5,0.5)$. Assume the revenues of the products are $\rho_1 = 20$, $\rho_2 = 19$ and $\rho_3 = 18$. Consider the tree shown in Figure~\ref{figure:ProductMIO_greedy_counterexample}.
\begin{figure}[t]
	\begin{center}
		\includegraphics[width=0.25\textwidth]{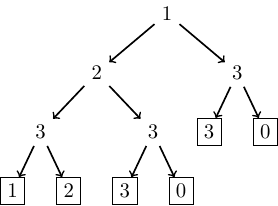}
	\end{center}
	\caption{Structure of tree for which the \ProductMIO primal subproblem is not solvable via a greedy algorithm. \label{figure:ProductMIO_greedy_counterexample}}
\end{figure}
We label the leaves as 1, 2, 3, 4, 5 and 6 from left to right. When we apply the greedy algorithm to solve this LO problem, we can see that there are multiple orderings of the leaves in decreasing revenue: (i) 1,2,3,5,4,6; (ii) 1,2,5,3,4,6; (iii) 1,2,3,5,6,4; and (iv) 1,2,5,3,6,4. For any of these orderings, the greedy solution will turn out to be  $(y_1,y_2,y_3,y_4,y_5,y_6) = (0.5,0,0,0,0,0.5)$, resulting in an objective value of $10$. However, the actual optimal solution $G_t(\xb)$ turns out to be $(y^*_1,y^*_2,y^*_3,y^*_4,y^*_5,y^*_6) = (0,0.5,0,0,0.5,0)$, for which the objective value is $18.5$. 

This counterexample shows that in general, the \ProductMIO primal subproblem cannot be solved to optimality via the same type of greedy algorithm as for \SplitMIO.

\section{Additional Numerical Results Based on Synthetic Data}
\label{appendix:additional_numerical_results}

Our experiments were implemented in the Julia programming language, version 1.8.4 \citepecom{bezanson2017julia} and executed on Amazon Elastic Compute Cloud (EC2) using a single instance of type \texttt{m6a.48xlarge} (AMD EPYC 7R13 processor with 2.95GHz clock speed, 192 virtual CPUs and 768 GB memory). 
All LO and MIO formulations were solved using Gurobi version 10.0.1 and modeled using the \texttt{JuMP} package \citepecom{lubin2015computing}, with a maximum of four cores per formulation.

\subsection{Further Discussion on the T1, T2, and T3 instances}
\label{subsec:synthetic_T1T2T3}

\YCRR{In Section~\ref{subsec:synthetic_T1T2T3}, we introduced the T1, T2, and T3 instances, which represent decision trees with distinct topologies. In this section, we provide additional discussion and empirical evidence related to these instances.

First, the trees in the T1 instances are highly symmetrical and structured. Thus, they are unlikely to arise in decision forest models estimated from real-world data. By contrast, trees in both the T2 and T3 instances trees are commonly observed in estimated decision forest models. Their difference lies in whether the trees are balanced; however, as shown in Table~\ref{table:synthetic_T_integrality_gap}, \SplitMIO and \ProductMIO perform comparably on these instances, with \ProductMIO exhibiting modest advantages in terms of integrality gap, optimality gap, and runtime.

In what follows, we use the IRI Dataset to illustrate the types of tree structures that actually appear in estimated decision forest models. We first define two indices for a given set of estimated trees $F$ in the decision forest model. The first, $I_{\text{FU}}$ (Fraction of Unbalanced Trees), is defined as
\begin{align*}
	I_{\text{FU}} = \sum_{t \in F} \mathbb{I}\left[ \text{tree $t$ is unbalanced} \right] / |F|,
\end{align*}
which measures the fraction of trees in $F$ that are unbalanced. The second, $I_{\text{LU}}$ (Level of Unbalancedness), quantifies how unbalanced the leaf depths are, averaged across trees. 
For a leaf $\ell$ in tree $t$, let $\text{depth}(\ell)$ denote its depth. Recall from Section~\ref{subsec:benders_SplitMIO_relaxation} that the root split node is defined to be at depth $1$. Consequently, $\text{depth}(\ell) \geq 2$ for all $\ell \in \leaves(t)$. 
With a slight abuse of notation, define $d^t_{\max} \equiv \max \{ \text{depth}(\ell) \mid \ell \in \leaves(t)   \}$ and $d^t_{\min} \equiv \min \{ \text{depth}(\ell) \mid \ell \in \leaves(t)   \}$ as the maximum and minimal leaf depths of tree $t$, respectively. The level of unbalancedness of tree $t$ is then
\begin{align*}
	\text{LU}(t) = \frac{ d^t_{\max}  - d^t_{\min}    }{  d^t_{\max}  - 2 },
\end{align*}
which takes value in $[0,1]$. By construction, $\text{LU}(t) = 0$ if and only if tree $t$ is perfectly balanced, while $\text{LU}(t) = 1$ for maximally unbalanced trees (for example, a tree whose split nodes form a single chain; note that this class of trees includes rankings). We then define the overall index for a collection of trees as
\begin{align*}
	I_{\text{LU}} =  \frac{1}{|F|}  \sum_{t \in F} \text{LU}(t) =  \frac{1}{|F|}  \sum_{t \in F}  \frac{ d^t_{\max} - d^t_{\min}  }{ d^t_{\max} - 2 }.
\end{align*}

Table~\ref{table:IRI_unbalanced_trees} reports the two indices for decision forest models estimated from each product category of the IRI Dataset. Both metrics show that the estimated trees are predominantly unbalanced. In particular, $I_{\text{FU}}$ indicates that up to 91\% of trees are unbalanced in some categories, and the overall average is 83\% across all categories. These results suggest that the trees we observe in practice resemble the T3 instances much more closely than the T2 instances. Importantly, we did not observe T1-type trees in these estimated decision forest models, supporting our point that the T1 instances in Section~\ref{subsec:synthetic_background} were constructed more for theoretical illustration rather than practical relevance.}

\begin{table}
	\centering
	\SingleSpacedXI
	\footnotesize
	\begin{tabular}{lrrllrr}
		\toprule
		Category             & $I_{\text{FU}}$ & $I_{\text{LU}}$&   & Category                  & $I_{\text{FU}}$ & $I_{\text{LU}}$  \\ \cmidrule{1-3} \cmidrule{5-7}
		Beer                 & 0.46           & 0.41                             &  & Mayonnaise                & 0.82            & 0.78                           \\
		Blades               & 0.82            & 0.64                              &  & Milk                      & 0.87            & 0.77                             \\
		Carbonated Beverages & 0.86            & 0.82                            &  & Mustard / Ketchup         & 0.91           & 0.84                              \\
		Cigarettes           & 0.88            & 0.82                           &  & Paper Towels              & 0.86            & 0.85                          \\
		Coffee               & 0.86           & 0.74                        &  & Peanut Butter             & 0.90           & 0.87                            \\
		Cold Cereal          & 0.88           & 0.86                       &  & Photo                     & 0.68            & 0.65                \\
		Deodorant            & 0.86            & 0.74                         &  & Salty Snacks              & 0.79           & 0.64                       \\
		Diapers              & 0.88            & 0.71                       &  & Shampoo                   & 0.57           & 0.40              \\
		Facial Tissue        & 0.82            & 0.80                       &  & Soup                      & 0.86           & 0.80                       \\
		Frozen Dinners       & 0.87            & 0.75                    &  & Spaghetti / Italian Sauce & 0.85            & 0.51                        \\
		Frozen Pizza         & 0.75           & 0.65                    &  & Sugar Substitutes         & 0.89           & 0.80                      \\
		Household Cleaners   & 0.88           & 0.87                     &  & Toilet Tissue             & 0.90            & 0.88                       \\
		Hotdogs              & 0.60           & 0.55                      &  & Toothbrush                & 0.9            & 0.90                      \\
		Laundry Detergent    & 0.84            & 0.79                    &  & Toothpaste                & 0.91            & 0.86                       \\
		Margarine / Butter   & 0.87            & 0.81                     &  & Yogurt                    & 0.89            & 0.75                      \\ \midrule 
		&                &                      &  & Average over all categories                 & 0.83           & 0.74            \\  \bottomrule         
	\end{tabular}
	\caption{The fraction of unbalanced trees and level of unbalancedness of the estimated decision forest model for each product category of the IRI Academic Dataset}
	\label{table:IRI_unbalanced_trees}
\end{table}

\subsection{Experiment: Tractability}
\label{subsec:synthetic_T_optimality_gap_T1_T2}

In this experiment, we seek to understand the tractability of \SplitMIO and \ProductMIO when they are solved as integer problems (i.e., not as relaxations). For a given instance and a given formulation $\Mcal$, we solve the integer version of formulation $\Mcal$. Due to the large size of some of the problem instances, we impose a computation time limit of 1 hour for each formulation. We record $T_{\Mcal}$, the computation time required for formulation $\Mcal$, and we record $O_{\Mcal}$ which is the final optimality gap, and is defined as $
O_{\Mcal} = 100\% \times \left({Z_{\text{UB},\Mcal} - Z_{\text{LB}, \Mcal} }\right)/{Z_{\text{UB}, \Mcal}}$, where $Z_{\text{UB},\Mcal}$ and $Z_{\text{LB}, \Mcal}$ are the best upper and lower bounds, respectively, obtained at the termination of formulation $\Mcal$ for the instance. We test all of the T1, T2 and T3 instances with $n = 100$, $|F| \in \{ 50, 100, 200, 500 \}$ and $|\leaves(t)| \in \{ 8, 16, 32, 64 \}$. Table~\ref{table:synthetic_T_optimality_gap} displays the average computation time and average optimality gap of each formulation for each combination of $n$, $|F|$ and $|\leaves(t)|$. For ease of notation in the table, we abbreviate $\SplitMIO$ as $\textsc{S-MIO}$ and $\ProductMIO$ as $\textsc{P-MIO}$, and write $N_L \equiv |\leaves(t)|$. 

From this table, we can see that for the smaller instances, \SplitMIO requires more time to solve than \ProductMIO. For larger instances \YCRR{in this set of experiments}, where the computation time limit is exhausted, the average gap obtained by \ProductMIO tends to be lower than that of \SplitMIO. This is congruent with our theoretical findings (specifically Proposition~\ref{proposition:ProductMIO_stronger_than_SplitMIO}). Similarly to our results on formulation strength in Section~\ref{subsec:synthetic_T_integrality_gap}, \ProductMIO's edge over \SplitMIO is most pronounced for the T1 instances, which exhibit the largest degree of product repetition among the splits compared to the T2 and T3 instances. For the T2 and T3 instances, which exhibit a lower degree of product repetition, the two formulations behave similarly, and the edge of \ProductMIO is smaller; this also makes sense, because as we note in Section~\ref{subsec:synthetic_T_integrality_gap}, \ProductMIO and \SplitMIO coincide when a tree is such that no product is repeated.

\begin{table}[ht]
	\centering
	\SingleSpacedXI
	\footnotesize
	\begin{tabular}{lrrrrrrllrrrrrr} \toprule
		Type & $|F|$ & $N_\text{L}$ & $O_{\textsc{S-MIO}}$ & $O_{\textsc{P-MIO}}$ & $T_{\textsc{S-MIO}}$ & $T_{\textsc{P-MIO}}$ & \hspace{0.5cm} & Type & $|F|$ & $N_\text{L}$ & $O_{\textsc{S-MIO}}$ & $O_{\textsc{P-MIO}}$ & $T_{\textsc{S-MIO}}$ & $T_{\textsc{P-MIO}}$\\ \cmidrule{1-7}  \cmidrule{9-15}
		T1   & 50    & 8              & 0.0             & 0.0               & 0.0             & 0.0               &  & T2   & 200   & 8              & 0.0             & 0.0               & 0.6             & 0.6               \\
		T1   & 50    & 16             & 0.0             & 0.0               & 0.1             & 0.0               &  & T2   & 200   & 16             & 0.0             & 0.0               & 747.7           & 673.6             \\
		T1   & 50    & 32             & 0.0             & 0.0               & 0.5             & 0.1               &  & T2   & 200   & 32             & 9.8             & 9.7               & 3600.0          & 3600.0            \\
		T1   & 50    & 64             & 0.0             & 0.0               & 2.4             & 0.5               &  & T2   & 200   & 64             & 17.1            & 16.3              & 3600.1          & 3600.0            \\  \cmidrule{1-7} \cmidrule{9-15}
		T1   & 100   & 8              & 0.0             & 0.0               & 0.0             & 0.0               &  & T2   & 500   & 8              & 0.0             & 0.0               & 171.6           & 167.9             \\
		T1   & 100   & 16             & 0.0             & 0.0               & 0.8             & 0.1               &  & T2   & 500   & 16             & 14.1            & 13.8              & 3600.0          & 3600.0            \\
		T1   & 100   & 32             & 0.0             & 0.0               & 13.5            & 1.6               &  & T2   & 500   & 32             & 23.4            & 23.0              & 3600.1          & 3600.1            \\
		T1   & 100   & 64             & 0.0             & 0.0               & 479.0           & 65.1              &  & T2   & 500   & 64             & 28.7            & 28.1              & 3600.1          & 3600.1            \\  \cmidrule{1-7} \cmidrule{9-15}
		T1   & 200   & 8              & 0.0             & 0.0               & 0.1             & 0.0               &  & T3   & 50    & 8              & 0.0             & 0.0               & 0.0             & 0.0               \\
		T1   & 200   & 16             & 0.0             & 0.0               & 25.0            & 3.0               &  & T3   & 50    & 16             & 0.0             & 0.0               & 0.1             & 0.1               \\
		T1   & 200   & 32             & 0.7             & 0.0               & 2330.2          & 421.5             &  & T3   & 50    & 32             & 0.0             & 0.0               & 0.6             & 0.6               \\
		T1   & 200   & 64             & 9.8             & 5.3               & 3600.1          & 3600.0            &  & T3   & 50    & 64             & 0.0             & 0.0               & 4.1             & 3.7               \\  \cmidrule{1-7} \cmidrule{9-15}
		T1   & 500   & 8              & 0.0             & 0.0               & 4.6             & 1.4               &  & T3   & 100   & 8              & 0.0             & 0.0               & 0.0             & 0.0               \\
		T1   & 500   & 16             & 5.0             & 1.2               & 3600.0          & 2951.7            &  & T3   & 100   & 16             & 0.0             & 0.0               & 0.8             & 0.7               \\
		T1   & 500   & 32             & 15.9            & 11.8              & 3600.1          & 3600.0            &  & T3   & 100   & 32             & 0.0             & 0.0               & 29.0            & 26.4              \\
		T1   & 500   & 64             & 20.9            & 17.4              & 3600.1          & 3600.1            &  & T3   & 100   & 64             & 0.9             & 0.5               & 2600.8          & 2206.5            \\  \cmidrule{1-7}  \cmidrule{9-15}
		T2   & 50    & 8              & 0.0             & 0.0               & 0.0             & 0.0               &  & T3   & 200   & 8              & 0.0             & 0.0               & 0.4             & 0.4               \\
		T2   & 50    & 16             & 0.0             & 0.0               & 0.1             & 0.1               &  & T3   & 200   & 16             & 0.0             & 0.0               & 76.8            & 79.1              \\
		T2   & 50    & 32             & 0.0             & 0.0               & 0.5             & 0.5               &  & T3   & 200   & 32             & 5.3             & 5.1               & 3600.0          & 3600.0            \\
		T2   & 50    & 64             & 0.0             & 0.0               & 24.0            & 24.6              &  & T3   & 200   & 64             & 13.5            & 12.5              & 3600.1          & 3600.0            \\  \cmidrule{1-7} \cmidrule{9-15}
		T2   & 100   & 8              & 0.0             & 0.0               & 0.0             & 0.0               &  & T3   & 500   & 8              & 0.0             & 0.0               & 75.9            & 72.9              \\
		T2   & 100   & 16             & 0.0             & 0.0               & 1.5             & 1.4               &  & T3   & 500   & 16             & 8.8             & 8.8               & 3600.0          & 3600.0            \\
		T2   & 100   & 32             & 0.0             & 0.0               & 245.2           & 234.9             &  & T3   & 500   & 32             & 21.0            & 20.3              & 3600.1          & 3600.0            \\
		T2   & 100   & 64             & 4.8             & 4.4               & 3600.0          & 3600.0            &  & T3   & 500   & 64             & 28.9            & 27.9              & 3600.2          & 3600.1     \\ \bottomrule
	\end{tabular}
	\caption{Comparison of final optimality gaps and computation times for \SplitMIO and \ProductMIO.
		\label{table:synthetic_T_optimality_gap}
	}
\end{table}

\YCRR{We note that Gurobi is able to process the full formulations of both \SplitMIO\ and \ProductMIO\ for all instances in this experiment, though the larger instances are not solved to optimality within the one-hour time limit.}

\subsection{Comparison to Heuristic Approaches}
\label{subsec:synthetic_T_vs_H}

In this experiment, we compare the performance of the two formulations to three different heuristic approaches:
\begin{enumerate}
\item \LocalSearch: A local search heuristic, which starts from the empty assortment, and in each iteration moves to the neighboring assortment which improves the expected revenue the most. The neighborhood of assortments consists of those assortments obtained by adding a new product to the current assortment, or removing one of the existing products from the assortment. The heuristic terminates when there is no assortment in the neighborhood of the current one that provides an improvement. 
\item \LocalSearchTen: This heuristic involves running \LocalSearch from ten randomly chosen starting assortments. Each assortment is chosen uniformly at random from the set of $2^n$ possible assortments. After the ten repetitions, the assortment with the best expected revenue is retained.
\item \ROA: This heuristic involves finding the optimal revenue ordered assortment. More formally, we define $S_k = \{i_1,\dots, i_k\}$, where $i_1,\dots, i_n$ corresponds to an ordering of the products so that $r_{i_1} \geq r_{i_2} \geq \dots \geq r_{i_n}$, and we find $\arg \max_{S \in \{S_1, \dots, S_n\}} R^{(F,\lambdab)}(S)$. 
\end{enumerate}

We compare these heuristics against the best integer solution obtained by each of our two MIO formulations, leading to a total of five methods for each instance. We measure the performance of the solution corresponding to approach $\Mcal$ using the metric $G_{\Mcal}$, which is defined as $G_{\Mcal} = 100\% \times { \left(Z' - Z_{\Mcal}\right)}/Z'$, where $Z'$ is the highest lower bound (i.e., integer solution) obtained from among the two MIO formulations and the three heuristics, and $Z_{\Mcal}$ is the objective value of the solution returned by approach $\Mcal$. 

\begin{table}
	\centering
	\SingleSpacedXI
	\footnotesize
	\begin{tabular}{lrrrrrrrllrrrrrrr} \toprule
		Type & $|F|$ & $N_L$ &  $G_{\textsc{S-MIO}}$ & $G_{\textsc{P-MIO}}$ & $G_{\LocalSearch}$ & $G_{\LocalSearchTen}$ & $G_{\ROA}$ & \hspace{0.5cm}   & Type & $|F|$ & $N_L$ &  $G_{\textsc{S-MIO}}$ & $G_{\textsc{P-MIO}}$ & $G_{\LocalSearch}$ & $G_{\LocalSearchTen}$ & $G_{\ROA}$ \\  \cmidrule{1-8} \cmidrule{ 10-17 }
		T1 & 50  & 8  & 0.0 & 0.0 & 8.1  & 2.1 & 26.8 &  & T2 & 200 & 8  & 0.0 & 0.0 & 3.8  & 0.8 & 23.1 \\
		T1 & 50  & 16 & 0.0 & 0.0 & 9.8  & 4.0 & 31.2 &  & T2 & 200 & 16 & 0.0 & 0.0 & 5.5  & 2.8 & 24.2 \\
		T1 & 50  & 32 & 0.0 & 0.0 & 9.0  & 5.2 & 27.5 &  & T2 & 200 & 32 & 0.2 & 0.2 & 7.0  & 4.5 & 24.7 \\
		T1 & 50  & 64 & 0.0 & 0.0 & 10.0 & 7.2 & 28.6 &  & T2 & 200 & 64 & 1.0 & 0.4 & 8.2  & 5.5 & 23.5 \\ \cmidrule{1-8} \cmidrule{ 10-17 }
		T1 & 100 & 8  & 0.0 & 0.0 & 3.9  & 2.4 & 26.0 &  & T2 & 500 & 8  & 0.0 & 0.0 & 2.0  & 0.5 & 15.6 \\
		T1 & 100 & 16 & 0.0 & 0.0 & 6.6  & 3.7 & 26.2 &  & T2 & 500 & 16 & 0.1 & 0.1 & 3.8  & 1.5 & 16.3 \\
		T1 & 100 & 32 & 0.0 & 0.0 & 8.5  & 6.3 & 25.6 &  & T2 & 500 & 32 & 0.5 & 0.4 & 4.9  & 1.8 & 15.1 \\
		T1 & 100 & 64 & 0.0 & 0.0 & 9.9  & 6.6 & 24.5 &  & T2 & 500 & 64 & 1.1 & 0.9 & 4.5  & 1.9 & 14.3 \\ \cmidrule{1-8} \cmidrule{ 10-17 }
		T1 & 200 & 8  & 0.0 & 0.0 & 3.5  & 1.3 & 19.9 &  & T3 & 50  & 8  & 0.0 & 0.0 & 13.2 & 3.8 & 33.1 \\
		T1 & 200 & 16 & 0.0 & 0.0 & 5.5  & 3.1 & 21.7 &  & T3 & 50  & 16 & 0.0 & 0.0 & 14.4 & 5.2 & 34.9 \\
		T1 & 200 & 32 & 0.0 & 0.0 & 7.8  & 4.6 & 22.3 &  & T3 & 50  & 32 & 0.0 & 0.0 & 12.6 & 5.0 & 33.7 \\
		T1 & 200 & 64 & 0.3 & 0.0 & 7.6  & 6.3 & 19.5 &  & T3 & 50  & 64 & 0.0 & 0.0 & 13.9 & 8.1 & 33.0 \\ \cmidrule{1-8} \cmidrule{ 10-17 }
		T1 & 500 & 8  & 0.0 & 0.0 & 1.5  & 0.3 & 14.6 &  & T3 & 100 & 8  & 0.0 & 0.0 & 8.0  & 1.9 & 30.2 \\
		T1 & 500 & 16 & 0.0 & 0.0 & 3.7  & 1.6 & 15.3 &  & T3 & 100 & 16 & 0.0 & 0.0 & 10.0 & 3.1 & 32.6 \\
		T1 & 500 & 32 & 0.3 & 0.0 & 5.3  & 2.4 & 15.9 &  & T3 & 100 & 32 & 0.0 & 0.0 & 10.4 & 3.8 & 32.0 \\
		T1 & 500 & 64 & 0.6 & 0.1 & 4.9  & 3.5 & 13.8 &  & T3 & 100 & 64 & 0.0 & 0.0 & 10.0 & 4.5 & 31.0 \\ \cmidrule{1-8} \cmidrule{ 10-17 }
		T2 & 50  & 8  & 0.0 & 0.0 & 13.8 & 3.2 & 31.5 &  & T3 & 200 & 8  & 0.0 & 0.0 & 4.0  & 1.1 & 25.8 \\
		T2 & 50  & 16 & 0.0 & 0.0 & 11.6 & 4.9 & 32.2 &  & T3 & 200 & 16 & 0.0 & 0.0 & 6.5  & 2.5 & 26.5 \\
		T2 & 50  & 32 & 0.0 & 0.0 & 10.0 & 5.8 & 31.1 &  & T3 & 200 & 32 & 0.1 & 0.1 & 7.6  & 3.3 & 26.9 \\
		T2 & 50  & 64 & 0.0 & 0.0 & 11.6 & 7.0 & 30.4 &  & T3 & 200 & 64 & 0.3 & 0.1 & 9.2  & 4.1 & 24.1 \\ \cmidrule{1-8} \cmidrule{ 10-17 }
		T2 & 100 & 8  & 0.0 & 0.0 & 5.5  & 1.9 & 28.1 &  & T3 & 500 & 8  & 0.0 & 0.0 & 2.6  & 0.4 & 15.8 \\
		T2 & 100 & 16 & 0.0 & 0.0 & 8.2  & 4.0 & 30.8 &  & T3 & 500 & 16 & 0.0 & 0.0 & 4.3  & 1.1 & 16.5 \\
		T2 & 100 & 32 & 0.0 & 0.0 & 8.9  & 4.8 & 31.3 &  & T3 & 500 & 32 & 0.5 & 0.5 & 5.3  & 1.3 & 16.0 \\
		T2 & 100 & 64 & 0.0 & 0.0 & 11.6 & 6.6 & 27.1 &  & T3 & 500 & 64 & 0.9 & 0.4 & 5.5  & 1.3 & 16.3 \\ \bottomrule
	\end{tabular}
	\caption{Comparison of \SplitMIO and \ProductMIO against the heuristics \LocalSearch, \LocalSearchTen and \ROA. \label{table:synthetic_T_vs_H} }
\end{table}

Table~\ref{table:synthetic_T_vs_H} shows the performance of the five approaches -- \SplitMIO, \ProductMIO, \LocalSearch, \LocalSearchTen and \ROA \ -- for each family of instances. The gaps are averaged over the twenty instances for each combination of instance type, $|F|$ and $|\leaves(t)|$.  Again, for ease of notation in the table, we abbreviate $\SplitMIO$ as $\textsc{S-MIO}$ and $\ProductMIO$ as $\textsc{P-MIO}$, and write $N_L \equiv |\leaves(t)|$. We can see from this table that in general, for the small instances, the solutions obtained by the MIO formulations are either the best or close to the best out of the five approaches, while the solutions produced by the heuristic approaches are quite suboptimal. For cases where the gap is zero for the MIO formulations, the gap of \LocalSearch ranges from 1.5\% to 13.9\%; the \LocalSearchTen heuristic improves on this, due to its use of restarting and randomization, but still does not perform as well as the MIO solutions (gaps ranging from 0.3 to 8.1\%). For the larger instances, where the gap of the MIO solutions is larger, \LocalSearch and \LocalSearchTen still tend to perform worse. Across all of the instances, \ROA achieves much higher gaps than all of the other approaches (ranging from 13.8 to 34.9\%). Overall, these results suggest that \emph{the very general structure of the decision forest model poses significant difficulty to standard heuristic approaches}, and highlight the value of using exact approaches over inexact/heuristic approaches to the assortment optimization problem.

\YCRR{
\subsection{Value of the Relaxation Phase in the \SplitMIO Benders Decomposition}
\label{subsec:value_of_relaxation_phase}

In this subsection, we present results from a computational experiment designed to understand the value of solving the linear relaxation of the \SplitMIO master problem~\eqref{prob:SplitMIO_benders_master_Dcal} before solving its integer version. We compare the performance of (i) our original Benders approach, which includes both the relaxation phase (phase~1) and the integer phase (phase~2) (see Section~\ref{subsec:benders_overall_approach}), and (ii) an alternative Benders approach that consists only of phase~2. In the latter approach, we solve the \SplitMIO master problem~\eqref{prob:SplitMIO_benders_integer_master_Dcal} using lazy constraint generation to separate constraints~\eqref{prob:SplitMIO_benders_integer_master_Dcal_mainconstraint} at integer solutions, without initializing the problem with cuts obtained from solving the LP relaxation.

We denote the original Benders approach (with phases~1 and~2) by the subscript ``\Benders'', and the alternative version that only executes phase~2 by ``\BendersTwoOnly.'' To ensure a fair comparison, we impose a two-hour time limit on the {\BendersTwoOnly} approach, matching the total time allocated to the original Benders approach (one hour for phase~1 and one hour for phase~2). Table~\ref{table:B_norlx2hrs_comparison_O} reports the final optimality gap metric, $O_{\Mcal} = (Z_{\text{UB}, \Mcal} - Z_{\text{LB}, \Mcal})/Z_{\text{UB}, \Mcal} \times 100\%$, where $Z_{\text{UB},\Mcal}$ and $Z_{\text{LB}, \Mcal}$ are the best upper and lower bounds, respectively, obtained from either the original Benders approach ($\Mcal  = \Benders$) or the phase-2-only approach ($\Mcal = \BendersTwoOnly$) after the computation time limit is exhausted. The same instances from Table~\ref{table:synthetic_B_T6} are used for consistency.

Table~\ref{table:B_norlx2hrs_comparison_O} summarizes the results and shows that the relaxation phase substantially improves performance across all instances. The reduction in optimality gap is particularly pronounced for larger problems ($n = 2000$ or $3000$), where the gap is reduced by roughly half---or even more. For example, for the $(n,\rho) = (2000, 0.06)$ instances, the original Benders approach achieves an average gap of about 2.5\%, while the phase-2-only approach yields a gap of roughly 10\%. Similarly, for $(n,\rho) = (3000, 0.12)$, the original Benders approach reaches an optimality gap of around 10\%, compared to approximately 35\% for the phase-2-only approach. This improvement occurs because the cuts from phase~1 provide a tighter upper bound and a more accurate piecewise-linear approximation of the functions ${ G_t(\cdot)}_{t \in F}$, enabling more effective pruning in the branch-and-bound process and resulting in better bounds within the limited computation time. In addition to the gap performance, we also observed that the integer solutions obtained by the original Benders approach are generally better than those from the phase-2-only approach for the larger instances.

}

\begin{table}
	\SingleSpacedXI
	\footnotesize
	\centering
	\YCRR{
	\begin{tabular}{ccrrr} \toprule
		$n$ & $\rho$ & $b$ & $O_{\Benders}$ & $O_{\BendersTwoOnly}$ \\ \midrule
		200 & 0.02 &   4 & 0.00 & 0.00 \\ 
		200 & 0.04 &   8 & 0.00 & 0.00 \\ 
		200 & 0.06 &  12 & 8.23 & 5.28 \\ 
		200 & 0.08 &  16 & 17.38 & 17.46 \\ 
		200 & 0.10 &  20 & 22.13 & 24.16 \\ 
		200 & 0.12 &  24 & 27.27 & 28.82 \\ \midrule
		500 & 0.02 &  10 & 0.00 & 0.00 \\ 
		500 & 0.04 &  20 & 0.48 & 1.66 \\ 
		500 & 0.06 &  30 & 8.10 & 12.15 \\ 
		500 & 0.08 &  40 & 14.26 & 19.09 \\ 
		500 & 0.10 &  50 & 18.26 & 25.84 \\ 
		500 & 0.12 &  60 & 22.94 & 29.13 \\ \midrule
		1000 & 0.02 &  20 & 0.00 & 0.00 \\ 
		1000 & 0.04 &  40 & 1.05 & 6.80 \\ 
		1000 & 0.06 &  60 & 6.16 & 13.98 \\ 
		1000 & 0.08 &  80 & 10.07 & 17.77 \\ 
		1000 & 0.10 & 100 & 13.60 & 20.78 \\ 
		1000 & 0.12 & 120 & 15.77 & 25.98 \\ \midrule
		2000 & 0.02 &  40 & 0.00 & 0.20 \\ 
		2000 & 0.04 &  80 & 0.21 & 6.43 \\ 
		2000 & 0.06 & 120 & 2.42 & 9.70 \\ 
		2000 & 0.08 & 160 & 5.21 & 13.30 \\ 
		2000 & 0.10 & 200 & 7.16 & 15.83 \\ 
		2000 & 0.12 & 240 & 11.91 & 17.01 \\ \midrule
		3000 & 0.02 &  60 & 0.00 & 0.46 \\ 
		3000 & 0.04 & 120 & 0.08 & 6.22 \\ 
		3000 & 0.06 & 180 & 1.81 & 10.44 \\ 
		3000 & 0.08 & 240 & 4.23 & 14.84 \\ 
		3000 & 0.10 & 300 & 6.17 & 14.07 \\ 
		3000 & 0.12 & 360 & 9.83 & 34.78 \\ \bottomrule
	\end{tabular}
	}
	\caption{Comparison of the full Benders approach (phases 1 and 2) with the phase-2-only Benders approach.\label{table:B_norlx2hrs_comparison_O} }
\end{table}

\subsection{Value of the Greedy-Based Benders Algorithm}
\label{subsec:greedy_experiment}

In this subsection, we present results from a computational experiment to showcase the value of the greedy-based Benders approach for solving the linear relaxation of \SplitMIO. We consider the same T3 instances from Section~\ref{subsec:synthetic_background}.

For each instance, we solve the linear relaxation of \SplitMIO using the greedy-based Benders approach, where we solve the master problem using constraint generation, and the primal and dual subproblems are solved using Algorithms~\ref{algorithm:SplitMIO_primal_greedy} and \ref{algorithm:SplitMIO_dual_greedy}, respectively. We denote this solution method by \SplitMIOGreedy. We also solve the linear relaxation of \SplitMIO using constraint generation, where we solve the dual subproblem~\eqref{prob:SplitMIO_sub_dual} directly using Gurobi, without using our greedy algorithms. We denote this solution method by \SplitMIONaive. Lastly, we also solve the linear relaxation of \ProductMIO using constraint generation, where again we solve the dual subproblem~\eqref{prob:ProductMIO_sub_dual} in that formulation directly using Gurobi. We denote this solution method by \ProductMIONaive. We impose a time limit of 2 hours on all three solution methods.

For each instance and for each method $m$ we record the solution time, $T_m$. We additionally compute $R_{m}$ as the reduction in computation time of using \SplitMIOGreedy relative to method $m$; mathematically, it is defined as 
\begin{equation}
	R_{m} = 100\% \times \left(  T_{m} - T_{\SplitMIOGreedy} \right) \slash {T_{m}} .
\end{equation}
We compute this metric for $m$ equal to either \SplitMIONaive or \ProductMIONaive. Lastly, we also compute the relative gap, $H$, of the \SplitMIO relaxation relative to the \ProductMIO relaxation, as 
\begin{equation}
	H = 100\% \times \left({Z_{\SplitMIOGreedy} - Z_{\ProductMIONaive}}\right) \slash {Z_{\ProductMIONaive}} ,
\end{equation}
where $Z_{\SplitMIOGreedy}$ and $Z_{\ProductMIONaive}$ are the objective values from the \SplitMIOGreedy and \ProductMIONaive approaches, respectively. Recall that the bound from \ProductMIO is always at least as tight as that of \SplitMIO, and hence this $H$ will always be a positive percentage.

\begin{table}
	\SingleSpacedXI
	\footnotesize
	\centering
		\begin{tabular}{ccrrrrrr} \toprule
		$n$ & $b$ & $T_{\SplitMIOGreedy}$ & $T_{\SplitMIONaive}$ & $T_{\ProductMIONaive}$ & $R_{\SplitMIONaive}$ & $R_{\ProductMIONaive}$ & $H$ \\ \midrule
		200 & 4 & 11.6 & 134.5 & 107.2 & 91.4 & 89.2 & 0.3 \\ 
		200 & 8 & 11.7 & 119.9 & 107.9 & 90.3 & 89.2 & 0.6 \\ 
		200 & 12 & 12.0 & 121.9 & 110.5 & 90.1 & 89.1 & 0.8 \\ 
		200 & 16 & 12.0 & 125.1 & 110.2 & 90.4 & 89.1 & 0.8 \\ 
		200 & 20 & 12.4 & 120.5 & 106.3 & 89.7 & 88.4 & 0.7 \\ 
		200 & 24 & 11.8 & 115.8 & 104.6 & 89.8 & 88.7 & 0.7 \\ \midrule
		500 & 10 & 9.9 & 110.0 & 111.7 & 91.0 & 91.0 & 0.5 \\ 
		500 & 20 & 20.8 & 182.4 & 189.0 & 88.6 & 88.9 & 0.4 \\ 
		500 & 30 & 23.0 & 189.9 & 193.4 & 87.9 & 88.1 & 0.4 \\ 
		500 & 40 & 23.2 & 186.8 & 190.4 & 87.6 & 87.8 & 0.4 \\ 
		500 & 50 & 24.7 & 192.8 & 194.3 & 87.2 & 87.2 & 0.3 \\ 
		500 & 60 & 25.1 & 184.4 & 189.5 & 86.4 & 86.8 & 0.3 \\ \midrule
		1000 & 20 & 11.4 & 121.3 & 170.7 & 90.6 & 93.4 & 0.1 \\ 
		1000 & 40 & 44.2 & 259.7 & 345.4 & 83.1 & 87.3 & 0.1 \\ 
		1000 & 60 & 55.5 & 285.4 & 412.2 & 80.6 & 86.6 & 0.2 \\ 
		1000 & 80 & 61.9 & 295.0 & 412.0 & 79.0 & 85.0 & 0.2 \\ 
		1000 & 100 & 70.2 & 303.0 & 415.4 & 76.8 & 83.1 & 0.2 \\ 
		1000 & 120 & 78.5 & 304.7 & 408.1 & 74.3 & 80.8 & 0.2 \\ \midrule
		2000 & 40 & 11.0 & 144.3 & 239.5 & 92.3 & 95.4 & 0.0 \\ 
		2000 & 80 & 36.8 & 204.1 & 439.8 & 82.0 & 91.7 & 0.0 \\ 
		2000 & 120 & 122.5 & 399.3 & 746.4 & 69.7 & 83.8 & 0.1 \\ 
		2000 & 160 & 196.2 & 507.6 & 950.8 & 61.6 & 79.5 & 0.0 \\ 
		2000 & 200 & 288.9 & 627.3 & 1096.7 & 53.9 & 73.7 & 0.1 \\ 
		2000 & 240 & 385.0 & 741.5 & 1249.3 & 48.2 & 69.3 & 0.1 \\ \midrule
		3000 & 60 & 11.6 & 163.2 & 361.6 & 92.9 & 96.7 & 0.0 \\ 
		3000 & 120 & 64.1 & 253.4 & 658.5 & 75.7 & 90.7 & 0.0 \\ 
		3000 & 180 & 299.4 & 632.2 & 1391.3 & 52.8 & 78.6 & 0.0 \\ 
		3000 & 240 & 619.7 & 1025.3 & 2079.3 & 39.6 & 70.2 & 0.0 \\ 
		3000 & 300 & 1009.8 & 1489.2 & 2609.9 & 32.2 & 61.3 & 0.0 \\ 
		3000 & 360 & 1524.0 & 2011.5 & 3258.1 & 24.2 & 53.2 & 0.0 \\  \bottomrule
	\end{tabular}
	\caption{Comparison of the solution methods for the \SplitMIO and \ProductMIO relaxations.}
	\label{table:greedy_vs_naive}
\end{table}

In Table~\ref{table:greedy_vs_naive}, we report the runtime $T_m$ for $m \in \{ \SplitMIOGreedy, \SplitMIONaive,  \ProductMIONaive \}$, along with the reduction in computation time of \SplitMIOGreedy relative to \SplitMIONaive and \ProductMIONaive, denoted as $R_{\SplitMIONaive}$ and $R_{\ProductMIONaive}$, respectively. Additionally, we present the relative gap $H$ of the \SplitMIO relaxation relative to the \ProductMIO relaxation. From this table, we obtain two important insights. The first is that \SplitMIOGreedy is always faster than \SplitMIONaive for solving the \SplitMIO relaxation across all of the combinations of $n$ and $b$. Comparing \SplitMIOGreedy and \SplitMIONaive, the reduction in solution time can be quite large in some cases. For example, for $n = 2000$ and $b = 40$, the average solution time for \SplitMIOGreedy is 11.0 seconds, while for \SplitMIONaive it is 144.3 seconds, which corresponds to a reduction of $R_{\SplitMIONaive} = 92.3\%$. Thus, even focusing on \SplitMIO, our proposed greedy-based method can be of significant utility in being able to solve the relaxation of this problem quickly at scale. 

The second insight is that \SplitMIOGreedy solves its relaxation faster than \ProductMIONaive across all values of $n$ and $b$. Interestingly, \ProductMIONaive takes longer to solve than \SplitMIONaive, and the reduction in computation time achieved by \SplitMIOGreedy relative to \ProductMIONaive is even more pronounced. For example, in the same instances where $n = 2000$ and $b = 40$, \SplitMIOGreedy requires on average 11.0 seconds, whereas \ProductMIONaive requires on average 239.5 seconds, which corresponds to a reduction in computation of $R_{\ProductMIONaive} = 95.4\%$. 

With regard to this second insight, a natural question is how the objective values of relaxed \SplitMIO and \ProductMIO compare for these instances: it could be the case that \ProductMIONaive produces an appreciably tighter bound than \SplitMIOGreedy, and hence the increase in computation time associated with \ProductMIONaive could potentially be worth it. However, this turns out to not be the case. In general, the average value of $H$ across all $n$ and $b$ values is no more than 0.8\%. This is consistent with our findings in Section~\ref{subsec:synthetic_T_integrality_gap}, where we show that the reduction in the integrality gap of \ProductMIO is largest for the T1 instances, which exhibit a huge degree of product repetition across splits, and smallest for the T3 instances, where there is a smaller degree of repetition and a higher degree of flexibility in the tree structure. 

Overall, these results highlight two synergistic aspects of the \SplitMIO formulation: (1) for the same problem instance, the greedy Benders approach for solving the linear relaxation of \SplitMIO is in general faster than the traditional ``direct'' Benders approaches for solving the linear relaxations of \SplitMIO and \ProductMIO; and (2) while \SplitMIO is in theory not as tight as \ProductMIO, the loss in bound quality is negligible in large-scale, randomly generated instances that do not exhibit a high degree of structure.

\section{Additional Numerical Results Based on the Sushi Dataset}
\label{appendix:additional_numerical_results_real_data}

In this section, we evaluate the end-to-end performance of the decision forest model in a data-to-decision setting, using a semi-synthetic experiment based on the Sushi dataset \citepecom{kamishima2003nantonac}. Our results demonstrate that when the data exhibits non-rational choice behavior that violates the regularity property, the decision forest model can yield better assortment decisions than rational choice models.

\subsection{Data Processing}
\label{subsec:sushi_preprocess}
Our data preprocessing process closely follows \citeecom{berbeglia2022comparative}. The Sushi dataset consists of the sushi preferences for 5,000 respondents. In the survey, each respondent was shown $n = 10$ type of sushi: \emph{ebi} (shrimp), \emph{anago} (sea eel), \emph{maguro} (tuna), \emph{ika} (squid), \emph{uni} (sean urchin), \emph{ikura} (salmon roe), \emph{tamago} (egg), \emph{toro} (fatty tuna), \emph{tekka maki} (tuna roll), and \emph{kappa maki} (cucumber roll). The respondent then was asked to rank these ten sushi types. Note that in this experiment, the respondents were not asked to rank the no-purchase option among the sushi types. We therefore consider a setting which \citeecom{berbeglia2022comparative} referred as \emph{Top 3}; see Section 4.2 of \citeecom{berbeglia2022comparative}. In this setting, each respondent's ranking is trimmed to have only length three. In other words, when offered an assortment, each respondent would only buy a sushi if one of her top 3 sushi types is in the assortment. %

We use $\sigma_1,\ldots,\sigma_{5000}$ to denote the resulting trimmed rankings. We assume each ranking $\sigma_k$, $k = 1,\ldots,5000$, represent a customer type in the market with probability weight $\lambda_k = 1/5000$. Note that each ranking $\sigma_{k}: \Ncal \cup \{ 0 \} \rightarrow \Ncal \cup \{ 0 \} $ is a bijection that assigns each option to a rank. Particularly, we use $\sigma_k(i)$ to denote the rank of option $i \in \Ncal \cup \{ 0 \}$, and $\sigma_k(i) < \sigma_k(j)$ indicates that $i$ is more preferred to $j$ under ranking $\sigma_k$. We use $\sigma_k[S]$ to denote the most preferred option when assortment $S$ is offered, i.e., $\sigma_k[S] \equiv \arg\min_{i \in S \cup \{ 0 \}} \sigma_k(i)$.

\subsection{Ground-Truth Choice Model with Choice Overload}

\YCRR{We study the performance of the decision forest model in a data-to-decision setting where customer choices may deviate from the regularity property. To this end, we construct a ground-truth choice model using rankings from the Sushi dataset, augmented with a behavioral component that captures choice overload (see Section~\ref{subsec:beer} for an overview). The idea is that as the assortment size increases, customers experience greater mental fatigue when evaluating options, making them more likely to leave without making a purchase.

For each $k \in \{1,\ldots,5000 \}$, let $\tilde{\sigma}_k[S]$ denote the purchase decision of customer type $k$ facing assortment $S$. We assume
\begin{align*}
	\tilde{\sigma}_k[S] = \left\{ \begin{array}{ll} \sigma_k[S], & \text{ with probability } 1- \pi(|S|), \\
		0, & \text{ with probability } \pi(|S|), \end{array} \right.
\end{align*}
where $\pi(|S|)$ is the baseline no-purchase probability. If $\pi(|S|)$ is fixed -- e.g., following \citeecom{berbeglia2022comparative}, where $\pi(|S|) = 0.5$ for all $S$ -- the resulting ground-truth choice model
\begin{align}
	\label{eq:choice_overload_groundtruth}
	P_{\texttt{GT}} (i \mid S) = \sum_{i=1}^{5000}  \frac{1}{5000} \cdot P \left(  \tilde{\sigma}_k[S] = i  \right) 
\end{align}
is a ranking-based model and thus consistent with the regularity property. However, if $\pi(|S|)$ increases with assortment size, the model may violate regularity and enter a non-rational choice regime.

In our experiment, we let the baseline no-purchase probability grow linearly with the assortment size:
\begin{equation}
\label{eq:baseline_no-purchase_prob}
\pi(|S|) = 0.5 + \alpha_{\text{OL}} \cdot \left( |S| - 3  \right)_+,
\end{equation}
where $(x)_+ = \max \{x,0 \}$ and $\alpha_{\text{OL}}$ measures the intensity of choice overload. When the assortment has three or fewer products, customers can still easily evaluate all options, form preferences, and make decisions without significant fatigue. Beyond this threshold, larger assortments increase cognitive load, making customers more likely to opt out of purchasing. Specifically, each additional product raises the no-purchase probability by $\alpha_\text{OL}$. Finally, the constant $0.5$ in \eqref{eq:baseline_no-purchase_prob} has no impact on the optimal assortment decision, as it uniformly scales all choice probabilities. We retain it so that when $\alpha_\text{OL}=0$, our setting coincides with that of \citeecom{berbeglia2022comparative}.
}

\subsection{Sample Transactions using the Ground-Truth Choice Model}

\YCRR{
We generate transaction data from the ground-truth model as follows. First, we sample a set of historical assortments $\Scal_{\text{sample}} = \{S_1,\ldots,S_{n_{\text{asst}}}\}$ uniformly without replacement from the set of all possible assortments. At each time period $\tau = 1,\ldots,n_{\text{tran}}$, an assortment $S_\tau$ is drawn uniformly at random from $\Scal_{\text{sample}}$, and a customer makes a purchase decision $o_\tau \in S_\tau \cup {0}$. Specifically, the arriving customer is assumed to belong to type $k$ with probability $1/5000$ and makes a choice according to $\tilde{\sigma}_k$. The purchase decision is then sampled as $o_\tau \sim \tilde{\sigma}_k[S_\tau]$. The resulting set of transactions is $\Tcal_1 \equiv \{ (S_\tau,o_\tau)_{\tau = 1,\ldots, n_\text{tran}} \}$.

We set $n_{\text{asst}} = 150$, which is roughly one-seventh of the total number of possible assortments ($2^n = 1024$). This design ensures that solving the data-to-decision problem requires the choice model to generalize effectively to unobserved assortments outside $\Scal_{\text{sample}}$. We also set $n_{\text{tran}} = 5000$, corresponding to five thousand transactions. To obtain multiple datasets, we repeat this data-generation procedure nine additional times, yielding ten transaction sets $\Tcal_1, \Tcal_2, \ldots, \Tcal_{10}$.
}

Before we present the numerical results, we comment on the difference between the setting in this section and the setting presented in Section~\ref{sec:IRI}. We argue that the juxtaposition of the two numerical studies can help us assess the optimal assortments returned by the decision forest model from two different angles. Recall that in Section~\ref{sec:IRI}, the IRI dataset offers real-world transaction data, which enable us to discuss how real consumer behavior and its inherit bias, such as the choice overload effect, influence the assortment decision; see Section~\ref{subsec:beer}. Meanwhile, the IRI dataset does not offer a ground truth model. Consequently, we were not able to \emph{absolutely} evaluate the performance of the assortment $S^{\text{DF}}$ returned by the decision forest model. The best we can do is to evaluate it \emph{relatively}, i.e., with respect to another estimated choice model; see Table~\ref{table:IRI}. In contrast, the numerical study in this section is built around a ground-truth choice model $P_{\texttt{GT}}(\cdot \mid \cdot)$. Having access to $P_{\texttt{GT}} (\cdot \mid \cdot)$ allows us to evaluate the expected revenue of any assortment \emph{absolutely}, rather than only in relative terms, and to quantify the optimality gap directly. Moreover, because $P_\GT(\cdot \mid \cdot)$ is explicitly constructed to incorporate both ranking-based preferences and choice overload, the synthetic datasets $\Tcal_1, \ldots, \Tcal_{10}$ capture consumer non-rationality solely through the choice overload effect, without additional confounding behaviors.

\subsection{Model Estimation and Performance Metrics}
\label{subsec:sushi_metrics}

Given a dataset $\Tcal_i$, we estimate the MNL, ranking-based, and decision forest models. The MNL model is estimated by the standard maximum likelihood estimation \citepecom{train2009discrete}. For the latter two, we follow the estimation procedures described in Section~\ref{sec:IRI}. In particular, for the decision forest model we again restrict the tree depth to three, and for the ranking-based model we limit the length of estimated rankings to at most three products above the no-purchase option. This restriction is imposed by adding an extra constraint to the column-generation subproblem when estimating the ranking-based model from data \citepecom{van2014market}\footnote{For example, adding the constraint $\sum_{j=1}^n x_{j0} \leq 3$ to Problem (12) of \citeecom{van2014market}.}. Under this setting, the class of ranking-based models we learn from data exactly contains the ground-truth model $P_\GT(\cdot \mid \cdot)$ when $\alpha_{\text{OL}} = 0$. Our empirical experience suggests that without imposing this length restriction, the ranking-based models lead to substantial performance degradation in assortment tasks examined in this section -- sometimes performing as poorly as the MNL model.

To assess the performance of each choice model in the assortment task, we propose a performance metric based on the optimality gap of the optimal assortment $S^{\Mcal}$ returned by the learned choice model $\Mcal$. Let $Z^*$ be the maximal revenue under the ground truth model $P_{\GT}$ and $Z^{\Mcal}$ be the expected revenue of assortment $S^{\Mcal}$ under the ground truth model. The optimality gap is thus defined as
\begin{align}
	\label{eq:sushi_optimality_gap}
	G_{\Mcal} = 100\% \times \left(Z^* - Z^{\Mcal}\right)/ Z^*.
\end{align} 
Note that the price information $\rho_i$ of each sushi type $i \in \Ncal$ needed to calculate the expected revenue is provided in the original dataset \citepecom{kamishima2003nantonac}

\YCRR{Before presenting the experimental results, we make additional remarks on the optimality gap metric~\eqref{eq:sushi_optimality_gap}. This metric can be interpreted as a measure of a model’s \emph{out-of-sample} performance in the data-to-decision assortment task. Recall that the observed assortments $\Scal_{\text{sample}}$ cover only about one-seventh of all possible assortments. Consequently, a choice model must be able to generalize its revenue predictions to unobserved assortments in order to make good decisions. More precisely, a choice model $\Mcal$ must extend the choice probability vector $P_{\Mcal}(\cdot \mid S)$ beyond the assortments in $\Scal_{\text{sample}}$, be able to compute the expected revenue of every possible assortment, and then identify the revenue-maximizing assortment $S^{\Mcal}$. In this sense, finding $S^{\Mcal}$ requires substantial generalizability beyond the training data.}

\subsection{Results}
\label{subsec:sushi_results}

\YCRR{Figure~\ref{figure:sushi_OPT_GAP} compares the out-of-sample revenue performance of the MNL, ranking-based, and decision forest models, as measured by the optimality gap~\eqref{eq:sushi_optimality_gap}. The x-axis shows the intensity of the choice overload effect $\alpha_{\text{OL}} \in \{ 0.005,0.01,0.015,0.02,0.025,0.03 \}$ tested in this  experiment. The y-axis shows the optimality gap $G_{\Mcal}$, with $\Mcal$ representing the MNL (MNL), ranking-based (RM), and decision forest (DF) models. Recall that for each $\alpha_{\text{OL}}$, we generate ten transaction datasets $\Tcal_i$, $i=1,\ldots,10$. Each dot in the figure represents the average $G_{\Mcal}$ across these datasets, with error bars indicating one standard deviation.}

\begin{figure}
	\centering
	\includegraphics[width=0.6\textwidth]{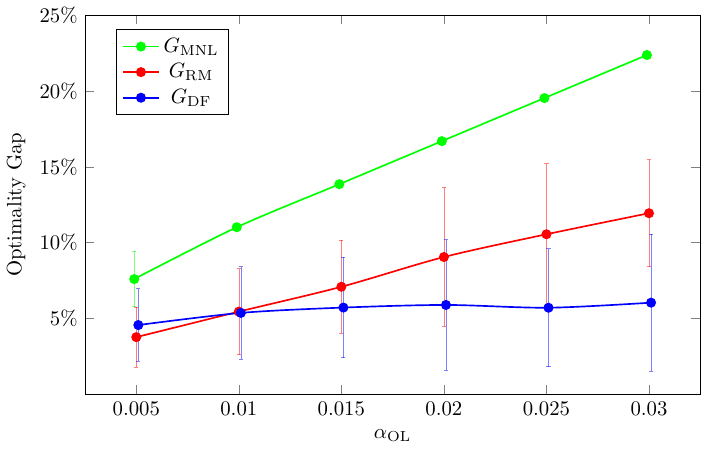}
	\caption{
		\YCRR{The performance of the MNL model, ranking-based model (RM) and the decision forest model (DF) measured by the optimality gap of the assortments they return. \label{figure:sushi_OPT_GAP}}
	}
\end{figure}

\YCRR{Among the three choice models, Figure~\ref{figure:sushi_OPT_GAP} shows that the MNL model performs the worst. Its optimality gap consistently increases with the intensity of choice overload, reaching roughly twice that of the ranking-based model. At the same time, the optimal assortment $S^{\text{MNL}}$ is remarkably stable compared to the other models, exhibiting zero standard deviation across the ten trials except when $\alpha_{\text{OL}} = 0.005$. This illustrates that, while the simplicity of the MNL model limits its ability to produce high-quality assortments, it offers an advantage in terms of stability for downstream operational tasks.}

\YCRR{When the choice overload is slight (i.e., $\alpha_{\text{OL}} = 0.005$), the ranking-based model performs best among the three. This is expected because, when $\alpha_{\text{OL}} = 0$, the ground-truth model~\eqref{eq:choice_overload_groundtruth} is a ranking-based model with length-three rankings -- exactly the class of model we estimate from the data (see Section~\ref{subsec:sushi_metrics}). The small increase to $\alpha_{\text{OL}} = 0.005$ does not materially affect its performance. The sub-optimality of the ranking-based model in this setting, around 4.5\%, can be viewed as a baseline for this experiment: it reflects the challenge of generalizing the assortment decision when the model observes only a small fraction (roughly one-seventh) of all assortments in the dataset.}

\YCRR{When $\alpha_{\text{OL}}$ is further increased to $0.02$, the ranking-based model’s performance noticeably degrades, and the decision forest model surpasses it. This illustrates that even a mild choice overload, where customers are only 2\% more likely to opt out of purchasing as the assortment size increases, can cause rational choice models, such as the ranking-based and MNL models, to suffer substantial sub-optimality, in this case at least 8\%. In contrast, the decision forest model remains robust: as $\alpha_{\text{OL}}$ increases from $0.01$ to $0.03$, its optimality gap stays around 5–6\%, while the gaps for the ranking-based and MNL models rise to 13\% and 22\%, respectively.}

\YCRR{These results highlight the flexibility of the decision forest model in general, non-rational choice regimes. When the ground-truth model is close to purely rational, more restricted models like the ranking-based model can outperform the decision forest model, as the latter’s flexibility may lead to overfitting and worse out-of-sample performance. However, this flexibility becomes an advantage under non-rational behavior: behavioral effects such as choice overload can significantly degrade the performance of rational choice models, whereas the decision forest model maintains consistent performance.}

\vspace{1.0em}
\begin{center}
	\fbox{
		\parbox{0.8\textwidth}{
		{\bfseries Sections~\ref{sec:proofs} and~\ref{sec:one_tree_analysis}} are not included in this electronic companion due to page limits. These materials are available at \url{https://arxiv.org/abs/2103.14067}.
		}
	}
\end{center}
\vspace{0.5em}

\makeatletter
\newcommand*\mysizeecom{%
  \@setfontsize\mysize{10.0}{9.0}%
}
\makeatother

\renewcommand{\bibfont}{\mysizeecom}
{\setlength{\bibsep}{3pt}
\bibliographystyleecom{plainnat}
\bibliographyecom{aodf_literature.bib}
}

\newpage

\section{Proofs}
\label{sec:proofs}

\subsection{Proof of Theorem~\ref{theorem:inapproximable_exponential_gap}}
\label{appendix:proof_AODF_inapproximable}

In what follows, we will transform the Max 1-in-E$k$SAT problem \citepapp{guruswami2005complexity} of $n$ variables into an instance of assortment problem $\AODF(k+1)$ over $n+1$ products. Notice that according to Theorem 9 of \citeapp{guruswami2005complexity}, unless P = NP, for any $\epsilon > 0$ and constant $k \geq 7$, there is no polynomial time  $( 2^{k - 2 \lceil  \sqrt{k}  \rceil } / (2ek) - \epsilon )$ approximation algorithm for the Max 1-in-E$k$SAT problem\footnote{\YCRR{The condition $k \geq 7$ originates from Theorem 7 of \citeapp{guruswami2005complexity}, which establishes the inapproximability of the Max $k$AND problem (first proved in \citeapp{samorodnitsky2000pcp}). Lemma 8 of \citeapp{guruswami2005complexity} then gives an approximation-preserving reduction from Max $k$AND to Max 1-in-E$k$SAT, thereby transferring the hardness result. Consequently, the $k \geq 7$ condition is implicitly assumed in Theorem 9 of \citeapp{guruswami2005complexity}.}}.

We first describe the Max 1-in-E$k$SAT problem, which is a variant of the SAT problem. In the Max 1-E$k$SAT problem, one has $n$ boolean variables $x_1,\ldots,x_n$ and a collection of clauses $C^1,C^2,\ldots,C^M$. Each clause consists of exactly $k$ literals and each literal is either one of the binary variables or its negation. A clause is satisfied if \emph{exactly} one literal is true. For example, consider the clause $C^1 = (  x_1, \neg x_2 , x_3  )$. The clause is satisfied if $(x_1,x_2,x_3) = (\texttt{true},\texttt{true},\texttt{false})$. However, $C^1$ is \emph{not} satisfied if $(x_1,x_2,x_3) = (\texttt{true},\texttt{false},\texttt{false})$, since in this case, the two literals $x_1$ and $\neg x_2$ are true. In the Max 1-in-E$k$SAT problem, one finds \YCRR{an} assignment of boolean variables $x_1,\ldots,x_n$ that maximizes the number of satisfied clauses.

Given an instance of the Max 1-in-E$k$SAT problem with $n$ variables, we show how the problem can be transformed into an instance of the assortment problem $\AODF(k+1)$ of $n+1$ products. First, we construct an instance of the latter problem such that each of its first $n$ products corresponds to a binary variable in the Max 1-in-E$k$SAT problem. Specifically, we let the $i$th product in $\AODF(k+1)$ correspond to variable $x_i$ in the Max 1-in-E$k$SAT problem, for $i = 1,2,\ldots,n$. The $(n+1)$-st product in $\AODF(k+1)$ is designed to ensure that the revenue of the assortment can correspond to the number of satisfied clauses. Furthermore, the action of assigning $\texttt{true}$ to variable $x_i$ in the Max 1-in-E$k$SAT problem corresponds to the action of including product $i$ in the assortment for $\AODF(k+1)$, implying that we will traverse to the left if we meet a split node of product $i$ in a decision tree of $\AODF(k+1)$. We assume that the marginal revenue of the products follows $\rho_1 = \ldots =\rho_n = 0$ and $\rho_{n+1} = M$, where $M$ is the number of clauses in the Max 1-in-E$k$SAT problem.

For each clause $C^m$ of the Max 1-in-E$k$SAT problem, we introduce a decision tree $t_m$ with depth $k+1$ and $O(k^2)$ leaf nodes that mimics the structure of $C^m$. We set the probability of tree $t_m$ to be $1/M$, so that $\lambda_{t_m} = 1/M$.

We then construct each tree as follows. Set the root node of $t_m$ to be a split node involving product $n+1$. The right child of the root node is set to be a leaf node with the no-purchase option $0$. We then construct a subtree by invoking the function $\texttt{GrowTree}$ on $C^m$ (formalized as Algorithm~\ref{alg:grow_subtree} below), and root this subtree, $\texttt{GrowTree}(C^m)$, at the left child of the root node of $t_m$; see Figure~\ref{fig:inapproximable_combined}. The function call $\texttt{GrowTree}(C^m)$ constructs a subtree according to the clause $C^m$ and the resulting tree $t_m$ will have the following property: an assortment will lead to a purchase decision of $n+1$ in tree $t_m$ if and only if we make exactly one of the literals true in $C^m$.

Note that Algorithm~\ref{alg:grow_subtree} will iteratively call a subroutine, $\texttt{GrowVine}$ (formalized as Algorithm~\ref{alg:grow_vine}), to build a subtree that corresponds to subclauses of clause $C^m$. To better explain the two algorithms, we introduce the following notation. We use $C^{m}_{j}$ to denote the $j$th literal of clause $C^m$ and $C^m_{j:k}$ to denote the subclause that consists of literals $C^m_j, C^m_{j+1},\ldots,C^m_{k}$. We further write $V(C^m_j)$ as the variable that appears in the literal $C^m_j$. For example, if $C^m = (  x_1, \neg x_2, x_3, \neg x_4 )$, then $C^m_2 = \neg x_2$, $C^m_{2:4} = ( \neg x_2, x_3, \neg x_4 ) $, and $V \left( C^m_2 \right) = 2$. Note that $C^m_{j:k}$ is simply an empty set if $j >k$. In the algorithms, we will introduce dummy variables $x_0$ and $x_{n+1}$ for convenience; however, they do not contribute to any real decision in the Max 1-in-E$k$SAT problem. With a slight abuse of notation, we use $v(s)$ to denote the product associated with a node $s$ in a decision tree. 

With this notation in hand, we now explain the mechanics of the function $\texttt{GrowVine}$. The function $\texttt{GrowVine}$ constructs a substructure, called a \emph{vine}, in a way that it only grows the split nodes in the \emph{opposite} directions of the literals. We design the vine to have the following property: one makes the purchase decision of $n+1$ in the vine if and only if one makes every literal in the subclause \emph{false}. Figure~\ref{fig:inapproximable_vine_algorithm} shows an example of a vine constructed according to subclause $( \neg x_2, x_3, \neg x_4  )$. The vine first places $2$ at its root, which \YCRR{is} the variable for the first literal $\neg x_2$ of the subclause. Since $x_2$ appears in negated form in the subclause, the vine branches to the left, which is the ``positive'' direction of the tree, to grow a split node for the next literal. The right child node of $2$ is set to a leaf node that is assigned to the no-purchase option. Now, we further proceed with the child split node of $2$, which will be associated with the next literal, $x_3$. We place $3$ at the node. Since $x_3$ appears in the non-negated form in the subclause, the vine branches to the right, which \YCRR{is} the ``negative'' direction of the tree, to grow a split node for the next literal. The left child node of $3$ is set to a leaf node that is assigned to the no-purchase option. The process repeats until we use all the literals in the subclause and place the purchase decision $n+1$ for the leaf which is reached if all the literals are false. Algorithm~\ref{alg:grow_vine} summarizes the detailed implementation in a pseudocode.

We now turn our attention back to \texttt{GrowTree}. Recall that Algorithm~\ref{alg:grow_subtree} grows a subtree, $\texttt{GrowTree}(C^m)$, according to the clause $C^m$, and that this subtree should be such that one reaches a leaf with the \YCRR{\emph{profitable}} purchase decision of $n+1$ if and only if one makes exactly one of the literals in the clause $C^m$ true. Figure~\ref{fig:inappximable_growtree_algo} shows an example of subtree $\texttt{GrowTree}(C^m)$ constructed according to the clause $C^m = (  x_1,\neg x_2, x_3, \neg x_4  )$. The algorithm first places $1$ at the root split node, which is the variable for the first literal $x_1$. The tree now grows in both directions. The left branch from the node with 1 corresponds to the condition that literal $x_1$ is already true, and so we construct a subtree rooted at the left child such that the profitable purchase node $n+1$ is reached if and only if the remaining literals $\neg x_2, x_3,$ and $\neg x_4$ are all made false. To this end, we use Algorithm~\ref{alg:grow_vine}, which takes the remaining literals as an input, to produce a vine $\texttt{GrowVine}( ( \neg x_2, x_3,\neg x_4 ) )$ as the left descendant of $1$. This vine is exactly the one in Figure~\ref{fig:inapproximable_vine_algorithm}. Now, we proceed to the right child of the split node with $1$. We set this node to a split node and place $2$ on this child node, which is the the variable for the second literal $\neg x_2$. We again grow the tree both to the left and right of the split node with $2$. For the right branch, which corresponds to the condition that in clause $C^m$ the first literal $x_1$ is already false and the second literal $\neg x_2$ is already true, we call Algorithm~\ref{alg:grow_vine} and place vine $\texttt{GrowVine}( (  x_3,\neg x_4 ) )$. We then move on to the left child node of 2 and repeat the process until we use all the literals. Figure~\ref{fig:inapproximable_algorithm_blocks} shows how we repeatedly call Algorithm~\ref{alg:grow_vine} in Algorithm~\ref{alg:grow_subtree} and Figure~\ref{fig:inapproximable_algorithm_details} shows the complete subtree. Figure~\ref{fig:inappximable_final_tree} shows the structure of the final tree $t_m$.

\begin{figure}[h!]
	\begin{subfigure}[t]{0.55\textwidth}
		\centering
		\includegraphics[height=1.5cm]{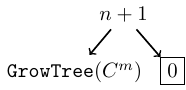}
		\caption{Tree constructed for clause $C^m$.}
		\label{fig:inapproximable_combined}
	\end{subfigure}
	\qquad
	\begin{subfigure}[t]{0.4\textwidth}
		\centering
		\includegraphics[height=3cm]{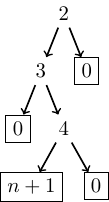}
		\caption{$\texttt{GrowVine}( ( \neg x_2 , x_3, \neg x_4   ) )$}
		\label{fig:inapproximable_vine_algorithm}
	\end{subfigure}%
	\caption{The construction of the tree $t_m$ and an example of the $\texttt{GrowVine}$ procedure (Algorithm~\ref{alg:grow_vine}) \YCRR{applying to a subclause $( \neg x_2, x_3, \neg x_4  )$}.}
	\label{fig:inappximable_tree}
\end{figure}

\begin{algorithm}
	\caption{GrowVine}
	\label{alg:grow_vine}
	\SingleSpacedXI
	\begin{algorithmic}[1]
		\REQUIRE subclause $C^{\text{sub}}$
		\STATE $C^{\text{sub}} \gets C^{\text{sub}} \cup \{ x_{n+1}  \}$ , \YCRR{to append $x_{n+1}$ to $C^{\text{sub}}$ at the end.}
		\STATE $s \gets$ root node
		\STATE $v(s) \gets  V\left(C^{\text{sub}}_{1}\right)$.
		\FOR{$q = 1, \ldots, |C^{\text{sub}}| - 1$}
		\STATE $(s_\text{L},s_\text{R}) \gets$ left and right child nodes of $s$.
		\IF{$C^{\text{sub}}_q$ is a negation}
		\STATE $\left(v(s_\text{L}), v(s_\text{R})\right) \gets \left( V\left(C^{\text{sub}}_{q+1}\right), 0   \right)$.
		\STATE $s \gets s_\text{L}$
		\ELSE
		\STATE $\left(v(s_\text{L}), v(s_\text{R})\right) \gets \left(  0, V\left(C^{\text{sub}}_{q+1}\right)   \right)$.
		\STATE $s \gets s_\text{R}$
		\ENDIF
		\ENDFOR
		\STATE Set all nodes with $0$ and $n+1$ as leaf nodes
	\end{algorithmic}
\end{algorithm}

\begin{figure}[h!]
	\begin{subfigure}[t]{0.35\textwidth}
		\centering
		\includegraphics[height=4cm]{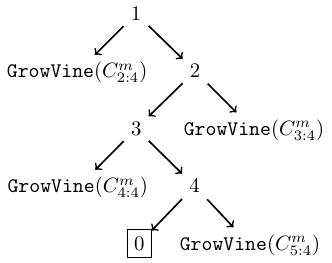}
		\caption{The \YCRR{growing} of $\texttt{GrowTree}(C^m)$}
		\label{fig:inapproximable_algorithm_blocks}
	\end{subfigure}%
	\qquad
	\begin{subfigure}[t]{0.6\textwidth}
		\centering
		\includegraphics[height=4cm]{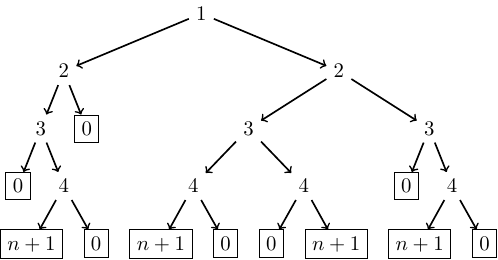}
		\caption{The resulting subtree $\texttt{GrowTree}(C^m)$}
		\label{fig:inapproximable_algorithm_details}
	\end{subfigure}	
	\caption{Example of the $\texttt{GrowTree}$ procedure (Algorithm~\ref{alg:grow_subtree}) for the clause $C^m = (  x_1,\neg x_2, x_3, \neg x_4  )$. 	\label{fig:inappximable_growtree_algo}}
\end{figure}

\begin{figure}[h!]
	\centering
	\includegraphics[width=0.5\textwidth]{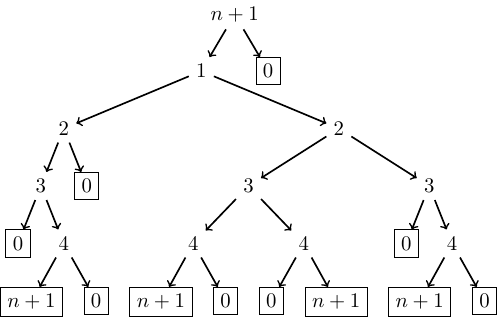}
	
	\caption{Final tree $t_m$ corresponding to clause $C^m =  (  x_1,\neg x_2, x_3, \neg x_4  )$. \label{fig:inappximable_final_tree}}
\end{figure}

\begin{algorithm}
	\caption{GrowTree}
	\label{alg:grow_subtree}
	\SingleSpacedXI
	\begin{algorithmic}
		\REQUIRE Clause $C$ and $k = |C|$
		\STATE $C \gets C \cup \{ x_0 \}$, added from behind.
		\STATE $s \gets$ root node
		\STATE $v(s) \gets V(C_1)$.
		\FOR{$q = 1, \ldots, k$}
		\STATE $(s_\text{L},s_\text{R}) \gets$ left and right child nodes of $s$.
		\IF{$C_q$ is a negation}
		\STATE Replace node $s_\text{R}$ by subtree $\texttt{GrowVine}(C_{q+1:k})$ via Algorithm~\ref{alg:grow_vine}.
		\STATE $v(s_\text{L}) \gets V(C_{q+1})$ 
		\STATE $s \gets s_\text{L}$
		\ELSE
		\STATE Replace node $s_\text{L}$ by subtree $\texttt{GrowVine}(C_{q+1:k})$ via Algorithm~\ref{alg:grow_vine}.
		\STATE $v(s_\text{R}) \gets V(C_{q+1})$ 
		\STATE $s \gets s_\text{R}$
		\ENDIF
		\ENDFOR
		\STATE Set all nodes with $0$ and $n+1$ \YCRR{other than the root} as leaf nodes
	\end{algorithmic}
\end{algorithm}

We note that the construction of the tree $t_m$ takes $O(k^2)$ \YCRR{runtime}, as we call out Algorithm~\ref{alg:grow_vine} at most $k$ times in Algorithm~\ref{alg:grow_subtree} and each \YCRR{call to} Algorithm~\ref{alg:grow_vine} uses $O(k)$ steps. One can easily show that the resulting tree $t_m$ has $2 + k(k+1)/2$ leaf nodes. Therefore, whenever $k \geq 2$, the tree $t_m$ has at most $k^2$ leaf nodes.

Finally, to complete the proof, it suffices to prove the following two claims. First, for any assignment $\xb$ to the Max 1-in-E$k$SAT problem with $z^{\text{SAT}}$ clauses satisfied, we can construct an assortment $S_{\xb}$ which has an expected revenue $z^{\text{SAT}}$ in the constructed instance of the assortment problem $\AODF(k+1)$. Second, for any assortment $S$ of expected revenue $z^{\AODF} > 0$ in the constructed instance of the assortment problem $\AODF(k+1)$, we can construct an assignment $\xb_S$ of variables to the the Max 1-in-E$k$SAT problem with $z^{\AODF}$ clauses satisfied. To prove the first claim, we simply construct an assortment $S_\xb$ that includes product $n+1$ and product $i $ if $x_i = \texttt{true}$ in the assignment $\xb$, for all $i = 1,\ldots,n$. Since tree $t_m$ contributes $1$ to the total expected revenue if and only if the claim $C^m$ is satisfied by $\xb$, we know that the expected revenue of the assortment is $z^{\text{SAT}}$.

To prove the second claim, we first note that assortment $S$ must have included product $n+1$, otherwise its expected revenue would be zero. We construct the assignment $\xb_S$ of variables to the Max 1-in-E$k$SAT problem by setting $x_i = \texttt{true}$ if product $i$ is offered in $S$. Furthermore, since each tree contributes either expected revenue $1$ or $0$ to the total expected revenue, there must be $z^{\AODF}$ trees that contributes to the revenue. For each such tree, the corresponding clause must be satisfied according to the tree construction. Therefore, the assignment $\xb$ satisfies $z^{\AODF}$ clauses. \hfill \Halmos

\subsection{Proof of Proposition~\ref{prop:randomized_approx_algo}}
\label{subsec:proof_prop:randomized_approx_algo}

Let $Z^*$ be the optimal objective value of the assortment problem $\AODF(d)$ and let $r^t_{\max}$ be the revenue of most expensive product in the leaf nodes of tree $t$. Call this leaf node $\ell^t_{\max}$; if there are multiple leaf nodes with the maximal revenue in tree $t$, we choose the leftmost one. Then we naturally have $Z^* \leq \YCRR{\sum_{t \in F}} \lambda_t \cdot r^t_{\max}$.

We construct a random assortment $S_X$ through the following procedure. First, we independently draw values for $X_1,X_2,\ldots,X_n$, where $X_i$ is an IID Bernoulli random variable with success rate $1/2$. We include product $i$ in the assortment $S_X$ if $X_i = 1$. Notice that given a decision tree of depth at most $d$, the probability of node $\ell^t_{\max}$ being chosen when $S_X$ is offered is
\begin{align*}
	\prod_{s \in \LS(\ell^t_{\max}) } P \left(  X_{v(s)} = 1 \right) \cdot \prod_{s \in \RS(\ell^t_{\max}) } P \left(  X_{v(s)} = 0 \right) = \prod_{s \in \LS(\ell^t_{\max}) }  \frac{1}{2} \cdot \prod_{s \in \RS(\ell^t_{\max}) } \frac{1}{2} \geq \frac{1}{2^d},
\end{align*}
where $\LS(\ell)$ and $\RS(\ell)$ are the sets of split nodes that consider leaf node $\ell$ as its left descendant and right descendant, respectively. The last inequality holds since the leaf node $\ell^t_{\max}$ would not have more than $d$ ancestors given that the tree $t$ is of depth at most $d$. Therefore, the expected revenue of assortment $S_X$ is
\begin{align*}
	E_{X} \left[   \YCRR{\sum_{t \in F}} \lambda_t \cdot R_t(S_X)  \right]  = \YCRR{\sum_{t \in F}} \lambda_t \cdot E_X \left[  R_t(S_X) \right]  \geq \YCRR{\sum_{t \in F}} \lambda_t \cdot P_{\ell^t_{\max}} \cdot r^t_{\max} \geq \YCRR{\sum_{t \in F}} {\lambda_t} \cdot \frac{r^t_{\max}}{2^d} \geq \frac{ Z^* }{2^d}
\end{align*}
where $R_t(S_X)$ is the revenue of tree $t$ under assortment $S_X$ and $P_{\ell^t_{\max}} $ is the probability that the tree $t$ chooses the leaf node $\ell^t_{\max}$ under assortment $S_X$. 

We can retrieve a deterministic assortment that approximates the optimal objective value $Z^*$ within \YCRR{a} factor $2^d$ by a derandomization procedure. Specifically, the derandomization procedure is based on iteratively evaluating the conditional expectations. Let $R(S) = \sum_{t \in F} \lambda_t \cdot R_t(S)$ be the revenue of assortment $S$. Starting from the first product, we compare the conditional expectations $E_X \left[  R(S_X)  \mid X_1 = x_1 \right]$ for $x_1 \in \{ 0,1 \}$. If $ E_X \left[  R(S_X)  \mid X_1 = 1 \right] > E_X \left[  R(S_X)  \mid X_1 = 0 \right] $, we set $x'_1 = 1$; otherwise, we set $x'_1 = 0$. Suppose we have determined $x'_1,\ldots,x'_{m}$. For the next product, we evaluate $E_X \left[  R(S_X)  \mid  X_1 = x'_1 , \ldots, X_m = x'_m, X_{m+1} = x_{m+1} \right]$ for $x_{m+1} \in \{ 0,1 \}$. If this conditional expectation is larger when $X_{m+1} = 1$, we set $x'_{m+1} = 1$; otherwise, we set $x'_{m+1} = 0$. Repeating this process for all $n$ variables yields a deterministic binary vector $(x'_1,\ldots,x'_n)$, which encodes an assortment $S'$ with revenue 
\begin{align*}
	R(S') & = E_X \left[  R(S_X) \mid X_1 = x'_1,\ldots,X_n = x'_n \right]  \\   
	& \geq  E_X \left[  R(S_X) \mid X_1 = x'_1,\ldots,X_n = x'_{n-1} \right]  \\ 
	& \geq \dots  \\ 
	& \geq E_X \left[  R(S_X) \mid X_1 = x'_1 \right] \\ 
	& \geq   E_X \left[  R(S_X) \right] \\ 
	& \geq Z^* / 2^d.
\end{align*}

To complete the proof, we show how to compute $E_X \left[  R(S_X)  \mid  X_1 = x'_1 , \ldots, X_m = x'_m \right]$ for any $m \in \{ 0,1,\ldots,n \}$ and partial assignment $(x'_1,\ldots,x'_m) \in \{ 0,1 \}^n$. By linearity of expectation,
\begin{equation*}
	E_X \left[  R(S_X)  \mid  X_1 = x'_1 , \ldots, X_m = x'_m \right] = \sum_{t \in F} \lambda_t \cdot E_X \left[  R_t(S_X)  \mid  X_1 = x'_1 , \ldots, X_m = x'_m \right]. 
\end{equation*}
Thus, it suffices to compute the conditional expectation $E_X \left[  R_t(S_X)  \mid  X_1 = x'_1 , \ldots, X_m = x'_m \right] $ for each tree $t \in F$. The procedure is straightforward: we traverse tree $t$ from the leaves to the root. For each split node $s \in \splits(t)$, let $\bar{r}_{t,s}$ denote its conditional revenue, indicating the revenue of an assortment that hits split node $s$ in the decision making process. If the corresponding product $i' = v(t,s)$ of the split node has already been fixed (i.e., $i' \leq m$), then the branch direction is determined. Specifically, if $x_{i'} = 1$, the conditional random assortment $S_{X \mid X_1 = x'_1,\ldots,X_m = x'_m}$ always follows the left child, so $\bar{r}_{t,s} =  \bar{r}_{t,\textbf{LC}(s)}$, where $\textbf{LC}(s)$ denotes the left child node of $s$. Otherwise, if $x_{i'} = 0$, the conditional random assortment always follows the right child, so $\bar{r}_{t,s} =  \bar{r}_{t,\textbf{RC}(s)}$, where $\textbf{RC}(s)$ denotes the right child node of $s$. If the product has not been fixed ($i' > m$), the random assortment branches left or right with equal probability, yielding $\bar{r}_{t,s} =  (1/2) \cdot \left(  \bar{r}_{t,\textbf{LC}(s)} + \bar{r}_{t,\textbf{RC}(s)}\right) $. This recursive evaluation gives $E_X \left[R_t(S_X) \mid X_1 = x_1,\ldots, X_m = x_m\right]$, as summarized in Algorithm~\eqref{algorithm:conditional_expectation}. In the algorithm, we use $\splits(t,d) = \{ s \in \splits(t) \mid d(s) = d\}$ to denote the set of all splits at a particular depth~$d$.

Note that this recursive evaluation to the conditional expectation is valid under Assumption~\ref{assumption:Requirement_3}. Particularly, this assumption states that no two split nodes $s$ and $s'$ in the same tree $t$, where $s'$ is a descendant of $s$, are assigned the same product (i.e., $v(t,s) \neq v(t,s')$). As a result, the random coin flips associated with the same product---although they may occur at multiple split nodes within the same tree---can be treated as independent copies of Bernoulli random variables with success probability 1/2. \hfill \Halmos

\begin{algorithm}
	\SingleSpacedXI
	\small
	\begin{algorithmic}[1]
		\STATE $\bar{r}_{t,\ell} \gets r_{t,\ell}$ for all $\ell \in \leaves(t)$
		\FOR{ $d = d_{\max}, d_{\max} - 1, \ldots, 1$ }
		\FOR{ $s \in \splits(t,d)$ }
		\IF{ $v(t,s) \leq m$}
		\STATE Set $\bar{r}_{t,s} =  \bar{r}_{t,\textbf{LC}(s)} \cdot x'_{v(t,s)} + \bar{r}_{t,\textbf{RC}(s)} \cdot \left(1 - x'_{v(t,s)}\right) $
		\ELSE
		\STATE Set $\bar{r}_{t,s} = (1/2)  \cdot \left(  \bar{r}_{t,\textbf{LC}(s)} +  \bar{r}_{t,\textbf{RC}(s)} \right)$
		\ENDIF
		\ENDFOR
		\ENDFOR
	\end{algorithmic}
	
	\caption{Conditional expectation $E_X \left[R_t(S_X) \mid X_1 = x'_1,\ldots, X_m = x'_m)\right]$ of tree $t$. \label{algorithm:conditional_expectation}}
\end{algorithm}

\subsection{Proof of Theorem~\ref{theorem:SplitMIO_benders_primal_greedy_BFS}}
\label{proof:SplitMIO_benders_primal_greedy_BFS}

\emph{Proof of part (a) (feasibility)}: By definition, the solution produced by Algorithm~\ref{algorithm:SplitMIO_primal_greedy} produces a solution $\yb_t$ that satisfies the left and right split constraints~\eqref{prob:SplitMIO_sub_primal_left} and \eqref{prob:SplitMIO_sub_primal_right}. With regard to the nonnegativity constraint~\eqref{prob:SplitMIO_sub_primal_nonnegative}, we can see that at each stage of Algorithm~\ref{algorithm:SplitMIO_primal_greedy}, the quantities $x_{v(t,s)} - \sum_{\ell \in \leftleaves(s)} y_{t,\ell}$, $1 - x_{v(t,s)} - \sum_{\ell \in \rightleaves(s)} y_{t,\ell}$ and $1 - \sum_{\ell \in \leaves(t)} y_{t,\ell}$ never become negative; thus, the solution $\yb_t$ produced upon termination satisfies the nonnegativity constraint~\eqref{prob:SplitMIO_sub_primal_nonnegative}. 

The only constraint that remains to be verified is constraint~\eqref{prob:SplitMIO_sub_primal_unitsum}, which requires that $\yb_t$ adds up to 1. Observe that it is sufficient for a $C$ event to occur during the execution of Algorithm~\ref{algorithm:SplitMIO_primal_greedy} to ensure that constraint~\eqref{prob:SplitMIO_sub_primal_unitsum} is satisfied. We will show that a $C$ event must occur during the execution of Algorithm~\ref{algorithm:SplitMIO_primal_greedy}. 

We proceed by contradiction. For the sake of a contradiction, let us suppose that a $C$ event does not occur during the execution of the algorithm. Note that under this assumption, for any split $s$, it is impossible that the solution $\yb_t$ produced by Algorithm~\ref{algorithm:SplitMIO_primal_greedy} satisfies $x_{v(t,s)}  = \sum_{\ell \in \leftleaves(s)} y_{t,\ell}$ and $1 - x_{v(t,s)}  = \sum_{\ell \in \rightleaves(s)} y_{t,\ell}$, 
as this would imply that $\sum_{\ell \in \leftleaves(s)} y_{t,\ell} + \sum_{\ell \in \rightleaves(s)} y_{t,\ell} = x_{v(t,s)} + 1 - x_{v(t,s)} = 1$; 
by the definition of Algorithm~\ref{algorithm:SplitMIO_primal_greedy}, this would have triggered a $C$ event at one of the leaves in $\leftleaves(s) \cup \rightleaves(s)$. 

Thus, this means that at every split, either $\sum_{\ell \in \leftleaves(s)} y_{t,\ell} < x_{v(t,s)}$ or $\sum_{\ell \in \rightleaves(s)} < 1 - x_{v(t,s)}$. Using this property, let us identify a leaf $\ell^*$ using the following procedure:
\vspace{1em}

\begin{center}
	\fbox{
		\parbox{0.8\textwidth}{
			\paragraph{Procedure 1:}
			\begin{enumerate}
				\item Set $j \gets \root(t)$. 
				\item If $j \in \leaves(t)$, terminate with $\ell^* = j$; otherwise, proceed to Step 3.
				\item If $\sum_{\ell \in \leftleaves(j)} y_{t,\ell} < x_{v(t,j)}$, set $j \gets \leftchild(j)$; otherwise, set $j \gets \rightchild(j)$.
				\item Repeat Step 2. 
			\end{enumerate}
		}
	}
\end{center}
\vspace{1em}

Note that by our observation that at most one of the left or right split constraints can be satisfied at equality for any split $s$, Procedure 1 above is guaranteed to terminate with a leaf $\ell^*$ such that:
\begin{align*}
	& y_{t,\ell^*} \leq \sum_{\ell \in \leftleaves(s)} y_{t,\ell} < x_{v(t,s)}, \quad \forall\ s \in \splits(t)\ \text{such that} \ \ell^* \in \leftleaves(s),\\
	& y_{t,\ell^*} \leq \sum_{\ell \in \rightleaves(s)} y_{t,\ell} < 1 - x_{v(t,s)}, \quad \forall\ s \in \splits(t)\ \text{such that} \ \ell^* \in \rightleaves(s).
\end{align*}
However, this is impossible, because Algorithm~\ref{algorithm:SplitMIO_primal_greedy} always sets each leaf $y_{t,\ell}$ to the highest value it can be without violating any of the left or right split constraints; the above conditions imply that $y_{t,\ell^*}$ could have been set higher, which is not possible. We thus have a contradiction, and it must be the case that a $C$ event occurs.

\noindent \emph{Proof of part (b) (extreme point)}: To show that $\yb_t$ is an extreme point, let us assume that $\yb_t$ is not an extreme point. Then, there exist feasible solutions $\yb^1_t$ and $\yb^2_t$ different from $\yb_t$ and a weight $\theta \in (0,1)$ such that $\yb_t = \theta \yb^1_t + (1 - \theta) \yb^2_t$. Let $\ell^*$ be the first leaf checked by Algorithm~\ref{algorithm:SplitMIO_primal_greedy} at which $y_{t,\ell^*} \neq y^1_{t,\ell^*}$ and $y_{t,\ell^*} \neq y^2_{t,\ell^*}$. Such a leaf must exist because $\yb_t \neq \yb^1_t$ and $\yb_t \neq \yb^2_t$, and because $\yb_t$ is the convex combination of $\yb^1_t$ and $\yb^2_t$. Without loss of generality, let us further assume that $y^1_{t,\ell^*} < y_{t,\ell^*} < y^2_{t,\ell^*}$.

By definition, Algorithm~\ref{algorithm:SplitMIO_primal_greedy} sets $y_{t,\ell}$ at each iteration to the largest it can be without violating the left split constraints~\eqref{prob:SplitMIO_sub_primal_left} and the right split constraints~\eqref{prob:SplitMIO_sub_primal_right}, and ensuring that $\sum_{\ell \in \leaves(t)} y_{t,\ell}$ does not exceed 1. Since $y^2_{t,\ell^*} > y_{t,\ell^*}$, and since $\yb^2_t$ and $\yb_t$ are equal for all leaves checked before $\ell^*$, this implies that $\yb^2_t$ either violates constraint~\eqref{prob:SplitMIO_sub_primal_left}, violates constraint~\eqref{prob:SplitMIO_sub_primal_right}, or is such that $\sum_{\ell \in \leaves(t)} y_{t,\ell} > 1$. This implies that $\yb^2_t$ cannot be a feasible solution, which contradicts the assumption that $\yb^2_t$ is a feasible solution. \Halmos

\subsection{Proof of Theorem~\ref{theorem:SplitMIO_benders_dual_greedy_BFS}}
\label{proof:SplitMIO_benders_dual_greedy_BFS}

\emph{Proof of part (a) (feasibility)}: Before we prove the result, we first establish a helpful property of the events that are triggered during the execution of Algorithm~\ref{algorithm:SplitMIO_primal_greedy}.

\begin{lemma}
	Let $s_1, s_2 \in \splits(t)$, $s_1 \neq s_2$, such that $s_2$ is a descendant of $s_1$. Suppose that $e_1 = A_{s_1}$ or $e_1 = B_{s_1}$, and that $e_2 = A_{s_2}$ or $e_2 = B_{s_2}$. If $e_1$ and $e_2$ occur during the execution of Algorithm~\ref{algorithm:SplitMIO_primal_greedy}, then $r_{t,f(e_1)} \leq r_{t,f(e_2)}$.
	
	\label{lemma:descendant_split_implies_higher_revenue}
\end{lemma}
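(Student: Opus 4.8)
The plan is to argue by contradiction, exploiting two structural features of Algorithm~\ref{algorithm:SplitMIO_primal_greedy}: once one of the split constraints of $s_1$ becomes tight it stays tight for the remainder of the run (because the $y_{t,\ell}$ never decrease and the algorithm always respects constraints~\eqref{prob:SplitMIO_sub_primal_left}--\eqref{prob:SplitMIO_sub_primal_right}), and the $\arg\min$ on line~17 breaks ties toward the split of smallest depth. First I would reduce to a single configuration using the left--right symmetry of the greedy procedure: exchanging $\leftleaves\leftrightarrow\rightleaves$, $\LS\leftrightarrow\RS$, $A_s\leftrightarrow B_s$, and $x_{v(t,s)}\leftrightarrow 1-x_{v(t,s)}$ carries one execution to another, so it suffices to treat the case in which $s_2$ lies in the left subtree of $s_1$, i.e. $\leftleaves(s_2)\cup\rightleaves(s_2)\subseteq\leftleaves(s_1)$, and in which the event at $s_1$ is $e_1=A_{s_1}$ (the mirror case $s_2$ in the right subtree with $e_1=B_{s_1}$ being identical after the swap; these are the pairings that actually arise when the lemma is invoked in the proof of Theorem~\ref{theorem:SplitMIO_benders_dual_greedy_BFS}).

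Next I would set $\ell_1=f(e_1)$ and $\ell_2=f(e_2)$ and assume for contradiction that $r_{t,\ell_1}>r_{t,\ell_2}$. Since leaves are processed in nonincreasing order of revenue, $\ell_1$ is then processed at some iteration $i_1$ strictly before the iteration $i_2$ at which $\ell_2$ is processed. Because $e_2\in\{A_{s_2},B_{s_2}\}$ is recorded at $\ell_2$, the event-recording step of Algorithm~\ref{algorithm:SplitMIO_primal_greedy} forces $\ell_2\in\leftleaves(s_2)\cup\rightleaves(s_2)$, hence $\ell_2\in\leftleaves(s_1)$, i.e. $s_1\in\LS(\ell_2)$. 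On the other hand, when $A_{s_1}$ is recorded at $\ell_1$ the algorithm sets $y_{t,\ell_1}$ so that $\sum_{\ell\in\leftleaves(s_1)} y_{t,\ell}=x_{v(t,s_1)}$, and by the persistence observation this equality holds at every iteration $\ge i_1$, in particular at the start of iteration $i_2$. Therefore the capacity computed for $s_1$ at iteration $i_2$ is
\[
q_{s_1} \;=\; x_{v(t,s_1)} - \sum_{\substack{j<i_2:\\ \tau(j)\in\leftleaves(s_1)}} y_{t,\tau(j)} \;=\; x_{v(t,s_1)} - \sum_{\ell\in\leftleaves(s_1)} y_{t,\ell} \;=\; 0,
\]
the middle equality holding because all leaves with index $\ge i_2$ still carry value $0$ at that moment.

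From here the argument closes within iteration $i_2$. Since capacities stay nonnegative throughout Algorithm~\ref{algorithm:SplitMIO_primal_greedy} (as used in the proof of Theorem~\ref{theorem:SplitMIO_benders_primal_greedy_BFS}), $q_{s_1}=0$ is the minimum possible value, so $q_{A,B}=0$ and $q^{\ast}=\min\{q_C,q_{A,B}\}=0$. If $q_C=0$ as well, then a $C$ event is recorded at $\ell_2$, so $\ell_2=f(C)$ and no event of the form $A_{s_2}$ or $B_{s_2}$ is recorded there, contradicting $f(e_2)=\ell_2$. Otherwise $q^{\ast}=q_{A,B}\neq q_C$, so line~17 sets $s^{\ast}$ to a minimizer of $q_s$, i.e. some split with $q_s=0$; since $s_1$ is such a split and the tie-break selects the split of smallest depth, $d(s^{\ast})\le d(s_1)<d(s_2)$ (strict because $s_2$ is a proper descendant of $s_1$), so $s^{\ast}\neq s_2$ and again the event recorded at $\ell_2$ is neither $A_{s_2}$ nor $B_{s_2}$, the same contradiction. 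Hence $r_{t,\ell_1}\le r_{t,\ell_2}$. I expect the main obstacle to be the first step: correctly pinning down which split constraint of $s_1$ must have become tight (the coupling between the type of event at $s_1$ and the subtree of $s_1$ containing $s_2$) and verifying that ``tightness'' of that constraint is genuinely preserved by the greedy updates; once that is in place, the remainder is routine bookkeeping inside a single iteration together with the depth tie-breaking rule.
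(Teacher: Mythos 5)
Your argument is correct and is, at its core, the paper's own proof: assuming $r_{t,f(e_1)} > r_{t,f(e_2)}$, the constraint of $s_1$ that became tight at $f(e_1)$ forces $q_{s_1}=0$ at the later iteration in which $f(e_2)$ is processed, so $q_{A,B}=0$, and the lowest-depth tie-breaking rule on line~17 of Algorithm~\ref{algorithm:SplitMIO_primal_greedy} then rules out $s^*=s_2$, contradicting the recording of $e_2$ at $f(e_2)$. You are more careful than the paper on two points that it passes over: you justify that the tightness at $s_1$ persists up to the iteration of $f(e_2)$ (via nonnegativity of the updates and feasibility of the running solution), and you dispose of the subcase $q_C=0$, in which a $C$ event rather than a split event is recorded. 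The one substantive difference is your explicit restriction to the ``aligned'' configurations ($e_1=A_{s_1}$ with $s_2$ in the left subtree of $s_1$, and its mirror image). The paper's proof nominally covers all four pairings, but its key localization step --- that $f(e_2)\in\leftleaves(s_1)$ when $e_1=A_{s_1}$ and $f(e_2)\in\rightleaves(s_1)$ when $e_1=B_{s_1}$ --- is determined by which subtree of $s_1$ contains $s_2$, not by the type of $e_1$, and so is only valid in the aligned configurations. In a misaligned configuration (say $e_1=B_{s_1}$ with $s_2$ in the left subtree of $s_1$) the tight constraint at $s_1$ does not zero out the capacity relevant to $f(e_2)$, and one can build small instances where the stated inequality actually fails; so the lemma is only true, and only invoked (as you observe, in the proof of Theorem~\ref{theorem:SplitMIO_benders_dual_greedy_BFS} the event type at the shallower split always matches the side containing the deeper one), under the alignment you impose. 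Your restriction is therefore not a gap in your proposal but the correct scope for the statement, and your write-up makes explicit a hypothesis the paper's proof uses silently.
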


\begin{proof}{Proof:}
	We will prove this by contradiction. Suppose that we have two splits $s_1$ and $s_2$ and events $e_1$ and $e_2$ as in the statement of the lemma, and that $r_{t,f(e_2)} < r_{t,f(e_1)}$. This implies that leaf $f(e_1)$ is checked before leaf $f(e_2)$. When leaf $f(e_1)$ is checked, the event $e_1$ occurs, which implies that either the left split constraint~\eqref{prob:SplitMIO_sub_primal_left} becomes tight (if $e_1 = A_{s_1}$) or the right split constraint~\eqref{prob:SplitMIO_sub_primal_right} becomes tight (if $e_1 = B_{s_1}$) at $s_1$. In either case, since $s_2$ is a descendant of $s_1$, the leaf $f(e_2)$ must be contained in the left leaves of split $s_1$ (if $e = A_{s_1}$) or the right leaves of split $s_1$ (if $e_1 = B_{s_1}$). Thus, when leaf $f(e_2)$ is checked, the event $e_2$ cannot occur, because $q_{s_1}$ in Algorithm~\ref{algorithm:SplitMIO_primal_greedy} will be zero (implying that $q_{A,B} = 0$), and so $s^*$ cannot be equal to $s_2$ because $s_1$ is a shallower split that attains the minimum of $q_{A,B} = 0$.  \Halmos
\end{proof}

To establish that $(\alphab_t, \betab_t, \gamma_t)$ is feasible for the \SplitMIO dual subproblem~\eqref{prob:SplitMIO_sub_dual}, we will first show that the $\alpha_{t,s}$ variables are nonnegative. Fix $s \in \splits(t)$. If $A_s \notin \Ecal$, then $\alpha_{t,s} = 0$, and constraint~\eqref{prob:SplitMIO_sub_dual_alphageqzero} is satisfied. If $A_s \in \Ecal$, then consider the split $\tilde{s} = \arg \min_{s'} \left[ \{ d(s') \ \mid \ s' \in \LS(f(A_s)),\ d(s') < d,\ A_{s'} \in \Ecal\} \cup \{ d(s') \ \mid \ s' \in \RS(f(A_s)),\ d(s') < d, \ B_{s'} \in \Ecal \} \right]$, 
where we recall that $d = d(s)$ is the depth of split $s$. In words, $\tilde{s}$ is the shallowest split (i.e., closest to the root) along the path of splits from the root node to split $s$ such that either an $A_{\tilde{s}}$ event occurs or a $B_{\tilde{s}}$ event occurs for split $\tilde{s}$. There are three possible cases that can occur here, which we now handle. 

\noindent \textbf{Case 1}: $\tilde{s} \in \LS(f(A_s))$. In this case, $A_{\tilde{s}} \in \Ecal$, and we have
\begin{align*}
	\alpha_{t,s} & = r_{t,f(A_s)} - \left[ \sum_{ \substack{ s' \in \LS(f(A_s)): \,\, d(s') < d, \,\, A_{s'} \in \Ecal}} \alpha_{t,s'} + \sum_{ \substack{s' \in \RS(f(A_s)):\,\, d(s') < d, \,\, B_{s'} \in \Ecal}} \beta_{t,s'} + \gamma_t \right] \\
	& = r_{t,f(A_s)} - \left[ \alpha_{t,\tilde{s}} + \sum_{ \substack{ s' \in \LS(f(A_s)): \,\, d(s') < d(\tilde{s}), \,\, A_{s'} \in \Ecal}} \alpha_{t,s'} + \sum_{ \substack{s' \in \RS(f(A_s)):\,\, d(s') < d(\tilde{s}), \,\, B_{s'} \in \Ecal}} \beta_{t,s'} + \gamma_t  \right] \\
	& = r_{t,f(A_s)} - \left[ \alpha_{t,\tilde{s}} + \sum_{ \substack{ s' \in \LS(f(A_{\tilde{s}})): \,\, d(s') < d(\tilde{s}), \,\, A_{s'} \in \Ecal}} \alpha_{t,s'} + \sum_{ \substack{s' \in \RS(f(A_{\tilde{s}})):\,\, d(s') < d(\tilde{s}), \,\, B_{s'} \in \Ecal}} \beta_{t,s'} + \gamma_t \right]  \\
	&  = r_{t, f(A_s)} - r_{t, f(A_{\tilde{s}})}  \\
	&  \geq 0,
\end{align*}
where the first step follows by the definition of $\alpha_{t,s}$ in Algorithm~\ref{algorithm:SplitMIO_dual_greedy}; the second step follows by the definition of $\alpha_{t, \tilde{s}}$ as the deepest split for which an $A$ or $B$ event occurs that is at a depth lower than $s$; the third step by the fact that the left splits and right splits of $f(A_{\tilde{s}})$ at a depth below $d(\tilde{s})$ are the same as the left and right splits of $f(A_s)$ at a depth below $d(\tilde{s})$; and the fourth step follows from the definition of $\alpha_{t,\tilde{s}}$ in Algorithm~\ref{algorithm:SplitMIO_dual_greedy}. The inequality follows by Lemma~\ref{lemma:descendant_split_implies_higher_revenue}.

\noindent \textbf{Case 2}: $\tilde{s} \in \RS(f(A_s))$. In this case, $B_{\tilde{s}} \in \Ecal$, and analogously to Case 1, we have:
\begin{align*}
	\alpha_{t,s} & = r_{t,f(A_s)} - \left[ \sum_{ \substack{ s' \in \LS(f(A_s)): \,\, d(s') < d, \,\, A_{s'} \in \Ecal}} \alpha_{t,s'} + \sum_{ \substack{s' \in \RS(f(A_s)):\,\, d(s') < d, B_{s'} \in \Ecal}} \beta_{t,s'} + \gamma_t \right] \\
	& = r_{t,f(A_s)} - \left[ \beta_{t,\tilde{s}} + \sum_{ \substack{ s' \in \LS(f(A_s)): \,\, d(s') < d(\tilde{s}), \,\, A_{s'} \in \Ecal}} \alpha_{t,s'} + \sum_{ \substack{s' \in \RS(f(A_s)):\,\, d(s') < d(\tilde{s}),\,\, B_{s'} \in \Ecal}} \beta_{t,s'} + \gamma_t  \right] \\
	& = r_{t,f(A_s)} - \left[ \beta_{t,\tilde{s}} + \sum_{ \substack{ s' \in \LS(f(B_{\tilde{s}})): \,\, d(s') < d(\tilde{s}), \,\, A_{s'} \in \Ecal}} \alpha_{t,s'} + \sum_{ \substack{s' \in \RS(f(B_{\tilde{s}})):\,\, d(s') < d(\tilde{s}),\,\, B_{s'} \in \Ecal}} \beta_{t,s'} + \gamma_t \right]  \\
	&  = r_{t, f(A_s)} - r_{t, f(B_{\tilde{s}})} \\
	& \geq 0.
\end{align*}

\noindent \textbf{Case 3}: $\tilde{s}$ is undefined because the underlying sets are empty. In this case, $\alpha_{t,s} = r_{t,f(A_s)} - \gamma_t$, and we have $\alpha_{t,s} = r_{t,f(A_s)} - \gamma_t = r_{t,f(A_s)} - r_{t,f(C)} \geq 0$, 
where the inequality follows because $f(C)$ is the last leaf to be checked before Algorithm~\ref{algorithm:SplitMIO_primal_greedy} terminates, and thus it must be that $r_{t,f(A_s)} \geq r_{t,f(C)}$. This establishes that $(\alphab_t, \betab_t, \gamma_t)$ satisfy constraint~\eqref{prob:SplitMIO_sub_dual_alphageqzero}. Constraint~\eqref{prob:SplitMIO_sub_dual_betageqzero} can be shown in an almost identical fashion; for brevity, we omit the steps. We thus only need to verify constraint~\eqref{prob:SplitMIO_sub_dual_r}. Let $\ell \in \leaves(t)$. Here, there are four mutually exclusive and collectively exhaustive cases to consider.

\noindent \textbf{Case 3.1}: $r_{t,\ell} \leq r_{t,f(C)}$. In this case we have $\sum_{s \in \LS(\ell)} \alpha_{t,s} + \sum_{s \in \RS(\ell)} \beta_{t,s} + \gamma_t  \geq \gamma_t = r_{t,f(C)} \geq r_{t,\ell}$.

\noindent \textbf{Case 3.2}: $r_{t,\ell} > r_{t,f(C)}$ and $\ell = f(A_s)$ for some $s \in \splits(t)$. In this case, we have
\begin{align*}
	\sum_{s' \in \LS(\ell)} \alpha_{t,s'} + \sum_{s' \in \RS(\ell)} \beta_{t,s'} + \gamma_t \geq \alpha_{t,s} + \sum_{ \substack{s' \in \LS(\ell): \\ d(s') < d(s), \,\, A_{s'} \in \Ecal} } \alpha_{t,s'} + \sum_{ \substack{ s' \in \RS(\ell): \\ d(s') < d(s), \,\, B_{s'} \in \Ecal }} \beta_{t,s'} + \gamma_t = r_{t, f(A_s)}  = r_{t,\ell},
\end{align*}
where the first step follows by the nonnegativity of $\alpha_{t,s'}$ and $\beta_{t,s'}$ for all $s'$, and the second step by the definition of $\alpha_{t,s}$ in Algorithm~\ref{algorithm:SplitMIO_dual_greedy}. 

\noindent \textbf{Case 3.3}: $r_{t,\ell} > r_{t,f(C)}$ and $\ell = f(B_s)$ for some $s \in \splits(t)$. By similar logic as case 2, we have
\begin{align*}
	\sum_{s' \in \LS(\ell)} \alpha_{t,s'} + \sum_{s' \in \RS(\ell)} \beta_{t,s'} + \gamma_t  \geq \beta_{t,s} + \sum_{ \substack{s' \in \LS(\ell): \\ d(s') < d(s), \,\, A_{s'} \in \Ecal} } \alpha_{t,s'} + \sum_{ \substack{ s' \in \RS(\ell): \\ d(s') < d(s), \,\, B_{s'} \in \Ecal }} \beta_{t,s'} + \gamma_t = r_{t, f(B_s)}  = r_{t,\ell}.
\end{align*}

\noindent \textbf{Case 3.4}: $r_{t,\ell} > r_{t,f(C)}$ and $\ell$ is not equal to $f(A_s)$ or $f(B_s)$ for any split $s$. In this case, when leaf $\ell$ is checked by Algorithm~\ref{algorithm:SplitMIO_primal_greedy}, the algorithm reaches line~17 where $s^*$ is determined and $e$ is set to either $A_{s^*}$ or $B_{s^*}$, and it turns out that $e$ is already in $\Ecal$. If $e = A_{s^*}$, then this means that leaf $f(A_{s^*})$ was checked before leaf $\ell$, and that $r_{t,\ell} \leq r_{t,f(A_{s^*})}$. We thus have
\begin{align*}
	\sum_{s \in \LS(\ell)} \alpha_{t,s} + \sum_{s \in \RS(\ell)} \beta_{t,s} + \gamma_t
	& \geq \alpha_{t,s^*} + \sum_{ \substack{s \in \LS(\ell): \,\, d(s) < d(s^*), \,\, A_{s} \in \Ecal} } \alpha_{t,s} + \sum_{ \substack{ s \in \RS(\ell): \,\, d(s) < d(s^*), \,\, B_{s} \in \Ecal }} \beta_{t,s} + \gamma_t \\
	& = \alpha_{t,s^*} + \sum_{ \substack{s \in \LS(f(A_{s^*})): \\ d(s) < d(s^*), \,\, A_{s} \in \Ecal} } \alpha_{t,s} + \sum_{ \substack{s \in \RS(f(A_{s^*})): \\ d(s) < d(s^*), \,\, B_{s} \in \Ecal }} \beta_{t,s} + \gamma_t  \\
	& = r_{t, f(A_{s^*})} \\
	& \geq r_{t,\ell},
\end{align*}
where the first equality follows because $\ell$ and $f(A_{s^*})$, by virtue of being to the left of $s^*$, share the same left and right splits at depths lower than $d(s^*)$. Similarly, if $e = B_{s^*}$, then the leaf $f(B_{s^*})$ was checked before $\ell$, which means that $r_{t,\ell} \leq r_{t,f(B_{s^*})}$; in this case, we have
\begin{align*}
	\sum_{s \in \LS(\ell)} \alpha_{t,s} + \sum_{s \in \RS(\ell)} \beta_{t,s} + \gamma_t 
	& \geq \beta_{t,s^*} + \sum_{ \substack{s \in \LS(\ell): \,\, d(s) < d(s^*), \,\, A_{s} \in \Ecal} } \alpha_{t,s} + \sum_{ \substack{ s \in \RS(\ell): \,\, d(s) < d(s^*), \,\, B_{s} \in \Ecal }} \beta_{t,s} + \gamma_t \\
	& \geq \beta_{t,s^*} + \sum_{ \substack{s \in \LS(f(B_{s^*})): \\ d(s) < d(s^*), \,\, A_{s} \in \Ecal} } \alpha_{t,s} + \sum_{ \substack{s \in \RS(f(B_{s^*})): \\ d(s) < d(s^*), \,\, B_{s} \in \Ecal }} \beta_{t,s} + \gamma_t \\
	& = r_{t, f(B_{s^*})} \\
	& \geq r_{t,\ell}.
\end{align*}

We have thus shown that $(\alphab_t, \betab_t, \gamma_t)$ is a feasible solution to the \SplitMIO dual subproblem~\eqref{prob:SplitMIO_sub_dual}. \\

\emph{Proof of part (b) (extreme point)}: To prove this, we will use the equivalence between extreme points and basic feasible solutions, and show that $(\alphab_t, \betab_t, \gamma_t)$ is a basic feasible solution of problem~\eqref{prob:SplitMIO_sub_dual}. 

Define the sets $L_A = \{ \ell \in \leaves(t) \mid \ell = f(A_s) \ \text{for some}\ s \in \splits(t) \} $ and $L_B = \{ \ell \in \leaves(t) \mid \ell = f(B_s) \ \text{for some}\ s \in \splits(t) \} $. 
Consider the following system of equations:
\begin{align}
	& \sum_{s \in \LS(\ell)} \alpha_{t,s} + \sum_{s \in \RS(\ell)} \beta_{t,s} + \gamma_t = r_{t,\ell}, \quad \forall \ell \in L_A, \label{eq:SplitMIO_dual_BFS_r_A} \\
	& \sum_{s \in \LS(\ell)} \alpha_{t,s} + \sum_{s \in \RS(\ell)} \beta_{t,s} + \gamma_t = r_{t,\ell}, \quad \forall \ell \in L_B, \label{eq:SplitMIO_dual_BFS_r_B} \\
	& \sum_{s \in \LS(f(C))} \alpha_{t,s} + \sum_{s \in \RS(f(C))} \beta_{t,s} + \gamma_t = r_{t,f(C)}, \label{eq:SplitMIO_dual_BFS_r_C} \\
	& \alpha_{t,s} = 0, \quad \forall \ s \ \text{such that}\ A_{s} \notin \Ecal, \label{eq:SplitMIO_dual_BFS_alpha} \\
	& \beta_{t,s} = 0, \quad \forall \ s \ \text{such that}\ B_{s} \notin \Ecal. \label{eq:SplitMIO_dual_BFS_beta}
\end{align}
Observe that each equation corresponds to a constraint from problem~\eqref{prob:SplitMIO_sub_dual} made to hold at equality. In addition, we note that there are $|L_A| + |L_B| + 1 + ( |\splits(t)| - |L_A|) + (|\splits(t)| - |L_B|) = 2 |\splits(t)| + 1$ equations, which is exactly the number of variables. We will show that the unique solution implied by this system of equations is exactly the solution $(\alphab_t, \betab_t, \gamma_t)$ that is produced by Algorithm~\ref{algorithm:SplitMIO_dual_greedy}. 

In order to establish this, we first establish a couple of useful results.

\begin{lemma}
	Suppose that $e \in \Ecal$, $\ell = f(e)$ and $e = A_s$ or $e = B_s$ for some $s \in \splits(t)$. Then: (a) $A_{s'} \notin \Ecal$ for all $s' \in \LS(\ell)$ such that $d(s') > d(s)$; and (b) $B_{s'} \notin \Ecal$ for all $s' \in \RS(\ell)$ such that $d(s') > d(s)$. 
	\label{lemma:alpha_beta_below_e_is_irrelevant}
\end{lemma}

\begin{proof}{Proof of Lemma~\ref{lemma:alpha_beta_below_e_is_irrelevant}:}
	We will prove this by contradiction. Without loss of generality, let us suppose that there exists an $A_{s'}$ event in $\Ecal$ where $s' \in \LS(\ell)$ and $d(s') > d(s)$. (The case where there exists an $B_{s'}$ event in $\Ecal$ where $s' \in \RS(\ell)$ and $d(s') > d(s)$ can be shown almost identically.)
	
	Since $A_{s'} \in \Ecal$, consider the leaf $\ell' = f(A_{s'})$. There are now two possibilities for when Algorithm~\ref{algorithm:SplitMIO_primal_greedy} checks leaf $\ell'$:
	\begin{enumerate}
		\item \textbf{Case 1}: Leaf $\ell'$ is checked after leaf $\ell$. In this case, in the iteration of Algorithm~\ref{algorithm:SplitMIO_primal_greedy} corresponding to leaf $\ell'$, it will be the case that $q_s = 0$ because the left constraint~\eqref{prob:SplitMIO_sub_primal_left} at split $s$ (if $e = A_s$) or the right constraint~\eqref{prob:SplitMIO_sub_primal_right} at split $s$ (if $e = B_s$) became tight when leaf $\ell$ was checked. As a result, $q_{A,B} = 0$ in the iteration for leaf $\ell'$. This implies that $s'$ cannot be the lowest depth split that attains the minimum $q_s$ value of $q_{A,B}$, because $q_s = 0$, and $s$ has a depth lower than $s'$, which contradicts the fact that the $A_{s'}$ event occurred.
		\item \textbf{Case 2}: Leaf $\ell'$ is checked before leaf $\ell$. In this case, consider the value of $q_s$ when leaf $\ell$ is checked in Algorithm~\ref{algorithm:SplitMIO_primal_greedy}. 
		
		If $q_s > 0$, then there is immediately a contradiction, because $q_{s'} = 0$ when leaf $\ell$ is checked (this is true because the left split constraint~\eqref{prob:SplitMIO_sub_primal_left} at $s'$ became tight after leaf $\ell'$ was checked), and thus it is impossible that $s^* = s$. If $q_s = 0$, then this implies that $x_{v(t,s)} = 0$. This would imply that $q_s = 0$ when leaf $\ell'$ was checked, which would imply that $s^*$ cannot be $s'$ when leaf $\ell'$ is checked because $s$ is at a lower depth than $s'$. 
	\end{enumerate}
	Thus, in either case, we arrive at a contradiction, which completes the proof. \Halmos
\end{proof}

\begin{lemma}
	Suppose $\ell = f(C)$. Then: (a) $A_{s'} \notin \Ecal$ for all $s' \in \LS(\ell)$; and (b) $B_{s'} \notin \Ecal$ for all $s' \in \RS(\ell)$.\label{lemma:alpha_beta_at_C_is_irrelevant}
\end{lemma}
{\it Proof of Lemma~\ref{lemma:alpha_beta_at_C_is_irrelevant}:} We proceed by contradiction. Suppose that $A_s$ occurs for some $s \in \LS(\ell)$ or that $B_s$ occurs for some $s \in \LS(\ell)$; in the former case, let $e = A_s$, and in the latter case, let $e = B_s$. Let $\ell' = f(e)$. Then $\ell'$ must be checked before $\ell$ by Algorithm~\ref{algorithm:SplitMIO_primal_greedy}, since the algorithm always terminates after a $C$ event occurs. Consider what happens when Algorithm~\ref{algorithm:SplitMIO_primal_greedy} checks leaf $\ell$:
\begin{enumerate}
	\item \textbf{Case 1}: $q_C > 0$. This is impossible, because if $e$ occurs, then $q_s$ when leaf $\ell$ is checked would have to be 0, which would imply that $q_{A,B} < q_C$ and that a $C$ event could not have occurred when $\ell$ was checked.
	\item \textbf{Case 2}: $q_C = 0$. This is also impossible, because it implies that the unit sum constraint~\eqref{prob:SplitMIO_sub_primal_unitsum} was satisfied at an earlier iteration, which would have triggered the $C$ event at a leaf that was checked before $\ell$.
\end{enumerate}
We thus have that $A_{s'}$ does not occur for any $s' \in \LS(\ell)$ and $B_{s'}$ does not occur for any $s' \in \RS(\ell)$. \hfill \Halmos 

With these two lemmas in hand, we now return to the proof of Theorem~\ref{proof:SplitMIO_benders_dual_greedy_BFS} (b). 
Observe now that by using Lemmas~\ref{lemma:alpha_beta_below_e_is_irrelevant} and \ref{lemma:alpha_beta_at_C_is_irrelevant} and using equations~\eqref{eq:SplitMIO_dual_BFS_alpha_simple} and \eqref{eq:SplitMIO_dual_BFS_beta_simple}, the system of equations~\eqref{eq:SplitMIO_dual_BFS_r_A}-\eqref{eq:SplitMIO_dual_BFS_beta} is equivalent to
\begin{align}
	& \alpha_{t,s} + \sum_{ \substack{s' \in \LS(f(A_s): \,\, d(s') < d(s), \,\, A_{s'} \in \Ecal} } \alpha_{t,s'} + \sum_{ \substack{s' \in \RS(\ell): \,\, d(s') < d(s), \,\, B_{s'} \in \Ecal} } \beta_{t,s'} + \gamma_t = r_{t,f(A_s)}, \quad \forall s \ \text{such that} \ A_s \in \Ecal, \nonumber\\
	& \beta_{t,s} + \sum_{ \substack{s' \in \LS(\ell):\,\, d(s') < d(s), \,\, A_{s'} \in \Ecal} } \alpha_{t,s'} + \sum_{ \substack{s' \in \RS(\ell): \,\, d(s') < d(s), \,\, B_{s'} \in \Ecal} } \beta_{t,s'} + \gamma_t = r_{t,f(B_s)}, \quad \forall s \ \text{such that} \ B_s \in \Ecal, \nonumber \\
	& \gamma_t = r_{t,f(C)}, \nonumber \\
	& \alpha_{t,s} = 0, \quad \forall \ s \ \text{such that}\ A_{s} \notin \Ecal, \label{eq:SplitMIO_dual_BFS_alpha_simple} \\
	& \beta_{t,s} = 0, \quad \forall \ s \ \text{such that}\ B_{s} \notin \Ecal. \label{eq:SplitMIO_dual_BFS_beta_simple}
\end{align}
Thus the solution implied by this system of equations is exactly the solution produced by Algorithm~\ref{algorithm:SplitMIO_dual_greedy}, establishing that $(\alphab_t, \betab_t, \gamma_t)$ is a basic feasible solution of problem~\eqref{prob:SplitMIO_sub_dual}, and thus an extreme point. \Halmos

\subsection{Proof of Theorem~\ref{theorem:SplitMIO_benders_primal_dual_greedy_optimal}}
\label{proof:SplitMIO_benders_primal_dual_greedy_optimal}

To prove that the $\yb_t$ and $(\alphab_t, \betab_t, \gamma_t)$ produced by Algorithms~\ref{algorithm:SplitMIO_primal_greedy} and \ref{algorithm:SplitMIO_dual_greedy} are optimal for their respective problems, we show that they satisfy complementary slackness for problems~\eqref{prob:SplitMIO_sub_primal} and \eqref{prob:SplitMIO_sub_dual}:
\begin{align}
	\alpha_{t,s} \cdot \left( x_{v(t,s)} - \sum_{\ell \in \leftleaves(s)} y_{t,\ell} \right) & = 0, \quad \forall \ s \in \splits(t), \label{eq:SplitMIO_CS_alpha_left} \\
	\beta_{t,s} \cdot \left( 1 - x_{v(t,s)} - \sum_{\ell \in \rightleaves(s)} y_{t,\ell} \right) & = 0, \quad \forall \ s \in \splits(t), \label{eq:SplitMIO_CS_beta_right}  \\
	y_{t,\ell} \cdot \left( \sum_{s \in \LS(\ell)} \alpha_{t,s} + \sum_{s \in \RS(\ell)} \beta_{t,s} + \gamma_t - r_{t,\ell} \right) & = 0, \quad \forall \ \ell \in \leaves(t). \label{eq:SplitMIO_CS_y_r}
\end{align}

\textbf{Condition~\eqref{eq:SplitMIO_CS_alpha_left}}: If $\alpha_{t,s} = 0$, then the condition is trivially satisfied. If $\alpha_{t,s} > 0$, then this implies that $A_s \in \Ecal$. This means that the left split constraint~\eqref{prob:SplitMIO_sub_primal_left} at $s$ became tight after leaves $f(A_s)$ was checked, which implies that $\sum_{\ell \in \leftleaves(s)} y_{t,\ell} = x_{v(t,s)}$ or equivalently, that $ x_{v(t,s)} - \sum_{\ell \in \leftleaves(s)} y_{t,\ell} = 0$, which again implies that the condition is satisfied.

\textbf{Condition~\eqref{eq:SplitMIO_CS_beta_right}}: This follows along similar logic to condition~\eqref{eq:SplitMIO_CS_alpha_left}, only that we use the fact that $\beta_{t,s} > 0$ implies that a $B_s$ event occurred and that the right split constraint~\eqref{prob:SplitMIO_sub_primal_right} at $s$ became tight. 

\textbf{Condition~\eqref{eq:SplitMIO_CS_y_r}}: If $y_{t,\ell} = 0$, then the condition is trivially satisfied. If $y_{t,\ell} > 0$, then either $\ell = f(C)$, or $\ell = f(A_s)$ for some split $s \in \LS(\ell)$, or $\ell = f(B_s)$ for some split $s \in \RS(\ell)$. In any of these three cases, as shown in the proof of part (b) of Theorem~\ref{theorem:SplitMIO_benders_dual_greedy_BFS}, the dual constraint~\eqref{prob:SplitMIO_sub_dual_r} holds with equality for any such leaf $\ell$. Thus, we have that $ \sum_{s \in \LS(\ell)} \alpha_{t,s} + \sum_{s \in \RS(\ell)} \beta_{t,s} + \gamma_t - r_{t,\ell} = 0$, and the condition is again satisfied. 

As complementary slackness holds, $\yb_t$ is feasible for the primal problem~\eqref{prob:SplitMIO_sub_primal} (Theorem~\ref{proof:SplitMIO_benders_primal_greedy_BFS}), and $(\alphab_t, \betab_t, \gamma_t)$  is feasible for the dual problem~\eqref{prob:SplitMIO_sub_dual} (Theorem~\ref{theorem:SplitMIO_benders_dual_greedy_BFS}). %
Thus they are optimal for their respective problems. \Halmos

\subsection{Proof of Theorem~\ref{theorem:SplitMIO_benders_primal_dual_binary_closedform_optimal}}
\label{proof:SplitMIO_benders_primal_dual_binary_closedform_optimal}

\emph{Proof of part (a):} Observe that by construction, $\yb_t$ automatically satisfies the unit sum constraint~\eqref{prob:SplitMIO_sub_primal_unitsum} and the nonnegativity constraint~\eqref{prob:SplitMIO_sub_primal_nonnegative}. We thus need to verify constraints~\eqref{prob:SplitMIO_sub_primal_left} and \eqref{prob:SplitMIO_sub_primal_right}. For constraint~\eqref{prob:SplitMIO_sub_primal_left}, observe that for any split $s \notin \LS(\ell^*)$, it must be that $\ell^* \notin \leftleaves(s)$. Thus, we will have $\sum_{\ell \in \leftleaves(s)} y_{t,\ell} = 0$, 
which means that constraint~\eqref{prob:SplitMIO_sub_primal_left} is automatically satisfied, because the right hand side $x_{v(t,s)}$ is always at least 0. On the other hand, for any split $s \in \LS(\ell^*)$, we will have that $x_{v(t,s)} = 1$, and that $\sum_{\ell \in \leftleaves(s)} y_{t,\ell} = \sum_{\ell \in \leftleaves(s) : \ell \neq \ell^*} y_{t,\ell} + y_{t,\ell^*} = 1$, 
which implies that constraint~\eqref{prob:SplitMIO_sub_primal_left} is satisfied. Similar reasoning can be used to establish that constraint~\eqref{prob:SplitMIO_sub_primal_right} holds. This establishes that $\yb_t$ is indeed a feasible solution of problem~\eqref{prob:SplitMIO_sub_primal}.

\emph{Proof of part (b):} By construction, $\alpha_{t,s} \geq 0$ and $\beta_{t,s} \geq 0$ for all $s \in \splits(t)$, so constraints~\eqref{prob:SplitMIO_sub_dual_alphageqzero} and \eqref{prob:SplitMIO_sub_dual_betageqzero} are satisfied. To verify constraint~\eqref{prob:SplitMIO_sub_dual_r}, fix a leaf $\ell \in \leaves(t)$. If $\ell \neq \ell^*$, then either $\ell \in \leftleaves(s')$ for some $s' \in \RS(\ell^*)$ or $\ell \in \rightleaves(s')$ for some $s' \in \LS(\ell^*)$. If $\ell \in \leftleaves(s')$ for some $s' \in \RS(\ell^*)$, then 
\begin{align*}
	\sum_{s: \ell \in \leftleaves(s)} \alpha_{t,s} + \sum_{s : \ell \in \rightleaves(s)} \beta_{t,s} + \gamma_t  \geq \alpha_{t,s'} + \gamma_t  \geq \max_{\ell' \in \leftleaves(s')} r_{t,\ell'} - r_{t,\ell^*} + r_{t,\ell^*}  \geq r_{t,\ell}
\end{align*}
where the first inequality follows because $\ell \in \leftleaves(s')$ and the fact that all $\alpha_{t,s}$ and $\beta_{t,s}$ variables are nonnegative; the second follows by how the dual solution is defined in the statement of the theorem; and the third by the definition of the maximum. 
Similarly, if $\ell \in \rightleaves(s')$ for some $s' \in \LS(\ell^*)$, then we have
\begin{align*}
	\sum_{s: \ell \in \leftleaves(s)} \alpha_{t,s} + \sum_{s : \ell \in \rightleaves(s)} \beta_{t,s} + \gamma_t  \geq \beta_{t,s'} + \gamma_t  \geq \max_{\ell' \in \rightleaves(s')} r_{t,\ell'} - r_{t,\ell^*} + r_{t,\ell^*}  \geq r_{\ell} - r_{t,\ell^*} + r_{t,\ell^*} = r_{t,\ell}.
\end{align*}
Lastly, if $\ell = \ell^*$, then we automatically have $\sum_{s : \ell^* \in \leftleaves(s)} \alpha_{t,s} + \sum_{s: \ell^* \in \rightleaves(s)} \beta_{t,s} + \gamma_t \geq \gamma_t  = r_{t,\ell^*}$. Thus, we have established that constraint~\eqref{prob:SplitMIO_sub_dual_r} is satisfied for all leaves $\ell$, and thus $(\alphab_t, \betab_t, \gamma_t)$ as defined in the statement of the theorem is a feasible solution of the dual~\eqref{prob:SplitMIO_sub_dual}. 

\emph{Proof of part (c):} To establish that the two solutions are optimal, by weak duality it is sufficient to show that the two solutions attain the same objective values in their respective problems. For the primal solution $\yb_t$, it is immediately clear that its objective is $r_{t,\ell^*}$. For the dual solution $(\alphab_t, \betab_t, \gamma_t)$, we have
\begin{align*}
	\sum_{s \in \splits(t)} \alpha_{t,s} x_{v(t,s)} + \sum_{s \in \splits(t)} \beta_{t,s} (1 - x_{v(t,s)}) + \gamma_t  = \sum_{s \in \RS(\ell^*)} \alpha_{t,s} x_{v(t,s)} + \sum_{s \in \LS(\ell^*)} \beta_{t,s} (1 - x_{v(t,s)}) + \gamma_t =  \gamma_t  = r_{t,\ell^*},
\end{align*}
where the first step follows because $\alpha_{t,s} = 0$ for $s \notin \RS(\ell^*)$ and $\beta_{t,s} = 0$ for $s \notin \LS(\ell^*)$; the second step follows by the fact that $x_{v(t,s)} = 0$ for $s \in \RS(\ell^*)$ and $x_{v(t,s)} = 1$ for $s \in \LS(\ell^*)$; and the final step follows just by the definition of $\gamma_t$. This establishes that $\yb_t$ and $(\alphab_t, \betab_t, \gamma_t)$ are optimal for their respective problems. \Halmos

\subsection{Proof of Theorem~\ref{theorem:ProductMIO_benders_primal_dual_binary_closedform_optimal}}
\label{proof:ProductMIO_benders_primal_dual_binary_closedform_optimal}

The proof follows a similar argument to the proof of Theorem~\ref{theorem:SplitMIO_benders_primal_dual_binary_closedform_optimal}. For brevity, we omit the proof.

\section{Additional Analysis of the \SplitMIO and \ProductMIO Formulations}
\label{sec:one_tree_analysis}

\subsection{\ProductMIO and the Independent Branching Schemes \cite{huchette2019combinatorial}}
\label{subsec:appendix-CDC}

As noted in Section~\ref{subsec:model_ProductMIO}, the \ProductMIO formulation is closely connected to the literature on independent branching schemes \citeapp{huchette2019combinatorial}. In this section, we elaborate on this connection. Assume one has a finite ground set $J$, and is interested in optimizing over a particular subset of the $(|J|-1)-$dimensional unit simplex over $J$, $\Delta^J = \{ \lambdab \in \Rbb^J \mid \sum_{j \in J} \lambda_j = 1; \lambdab \geq \zerob\}$. The specific subset that we are interested in is called a \emph{combinatorial disjunctive constraint} (CDC), and is given by $\CDC(\Scal) = \bigcup_{S \in \Scal} Q(S)$, 
where $\Scal$ is a finite collection of subsets of $J$ and $Q(S) = \{ \lambdab \in \Delta \mid \lambda_j \leq 0 \ \text{for} \ j \in J \setminus S\}$ for any $S \subseteq J$. This approach is very general: for example, by associating each $j$ with a point $\xb^j$ in $\Rbb^n$, one can use $\CDC(\Scal)$ to model an optimization problem over a union of polyhedra, where each polyhedron is the convex hull of a collection of vertices in $S \in \Scal$. 

A \emph{$k$-way independent branching scheme of depth $t$} is a representation of $\CDC(\Scal)$ as a sequence of $t$ choices between $k$ alternatives. Specifically, we have $\CDC(\Scal) = \bigcap_{m=1}^t \bigcup_{i = 1}^k Q({L^i_m})$, 
where ${L^i_m} \subseteq J$. 
In the special case that $k = 2$, we can write $\CDC(\Scal) = \cap_{m=1}^t ( L_{{m}} \cup R_{{m}})$ where $L_{{m}}, R_{{m}} \subseteq J$. This representation is known as a \emph{pairwise independent branching scheme} and the constraints of the corresponding MIO can be written simply as 
\begin{subequations}
	\label{prob:CDC_pairwise_IBS}
	\begin{alignat}{2}
		& & & \sum_{j \in L_{{m}}} \lambda_j \leq z_m, \quad \forall\ m \in \{1,\dots, t\}, \label{prob:CDC_pairwise_IBS_left}\\
		& & & \sum_{j \in R_{{m}}} \lambda_j \leq 1 - z_m, \quad \forall\ m \in \{1,\dots, t\}, \label{prob:CDC_pairwise_IBS_right}\\
		& & & z_m \in \{0,1\}, \quad \forall \ m \in \{1,\dots, t\}, \label{prob:CDC_pairwise_IBS_binary} \\
		& & & \sum_{j \in J} \lambda_j = 1, \\
		& & & \lambda_j \geq 0, \quad \forall \ j \in J.
	\end{alignat}%
\end{subequations}
This particular special case is important because it is always integral (see Theorem~1 of \citeapp{vielma2010mixed}). Moreover, we can see that \ProductMIO bears a strong resemblance to formulation~\eqref{prob:CDC_pairwise_IBS}. Constraints~\eqref{prob:CDC_pairwise_IBS_left} and \eqref{prob:CDC_pairwise_IBS_right} correspond to constraints~\eqref{prob:ProductMIO_leftsplitconstraint} and \eqref{prob:ProductMIO_rightsplitconstraint}, respectively. In terms of variables, the $\lambda_j$ and $z_m$ variables in formulation~\eqref{prob:CDC_pairwise_IBS} correspond to the $y_{t,\ell}$ and $x_i$ variables in \ProductMIO, respectively.

One notable difference is that in practice, one would use formulation~\eqref{prob:CDC_pairwise_IBS} in a modular way; specifically, one would be faced with a problem where the feasible region can be written as $\CDC(\Scal_1) \cap \CDC(\Scal_2) \cap \dots \cap \CDC(\Scal_G)$, where each $\Scal_g$ is a collection of subsets of $J$. To model this feasible region, one would introduce a set of $z_{g,m}$ variables for the $g$th CDC, enforce constraints~\eqref{prob:CDC_pairwise_IBS_left} - \eqref{prob:CDC_pairwise_IBS_binary} for the $g$th CDC, and use only one set of $\lambda_j$ variables for the whole formulation. Thus, the $\lambda_j$ variables are the ``global'' variables, while the $z_{g,m}$ variables would be ``local'' and specific to each CDC. In contrast, in \ProductMIO, the $x_i$ variables (the analogues of $z_m$) are the ``global'' variables, while the $y_{t,\ell}$ variables (the analogues of $\lambda_j$) are the ``local'' variables.

\subsection{Comparison of \SplitMIO and \ProductMIO when $|F| = 1$}
\label{subsec:appendix-single-tree}

In this section, we analyze the \SplitMIO and \ProductMIO formulations in the special case where $|F| = 1$, i.e., the decision forest model consists of a single tree. To simplify notation, we drop the index $t$. Let$\Fcal_{\SplitMIO}$ and $\Fcal_{\ProductMIO}$ denote the feasible regions of \SplitMIO and \ProductMIO, respectively. We begin by characterizing the condition under which $\Fcal_{\SplitMIO}$ is integral.
\begin{proposition}
	Let $(F, \lambdab)$ be a decision forest model consisting of a single tree, i.e., $|F| = 1$. Then $\Fcal_{\SplitMIO}$ is integral if and only if $v(s) = i$ for at most one $s \in \splits$ for every product $i \in \Ncal$.
	\label{proposition:SplitMIO_Keq1_integral}
\end{proposition}
The proof (see Appendix~\ref{appendix:proof_SplitMIO_Keq1_integral}) follows by showing that the constraint matrix defining $\Fcal_{\SplitMIO}$ is totally unimodular.

Proposition~\ref{proposition:SplitMIO_Keq1_integral} establishes that when $|F| = 1$, the feasible region $\Fcal_{\SplitMIO}$ is integral \emph{if and only if} each product appears in at most one split of the decision tree. By contrast, the following result shows that \ProductMIO enjoys integrality unconditionally in this $|F| = 1$ setting.
\begin{proposition}
	\label{proposition:ProductMIO_Keq1_integral}
	For any decision forest model $(F, \lambdab)$ with $|F| = 1$, $\Fcal_{\ProductMIO}$ is integral.
\end{proposition}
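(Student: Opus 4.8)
The plan is to characterize the extreme points of $\Fcal_{\ProductMIO}$ directly by a perturbation argument. Since $|F| = 1$ I drop the tree index $t$ and write $\leaves$, $\splits$, $\leftleaves(s)$, $\rightleaves(s)$, $v(s)$. For each product $i \in \Ncal$ set $L_i = \bigcup_{s \in \splits:\, v(s)=i} \leftleaves(s)$ and $R_i = \bigcup_{s \in \splits:\, v(s)=i} \rightleaves(s)$, so that constraints~\eqref{prob:ProductMIO_leftsplitconstraint}--\eqref{prob:ProductMIO_rightsplitconstraint} read $\sum_{\ell \in L_i} y_\ell \le x_i$ and $\sum_{\ell \in R_i} y_\ell \le 1 - x_i$. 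The one structural fact I would extract from Assumption~\ref{assumption:Requirement_3} at the outset is that along the root-to-$\ell$ path of any leaf $\ell$ a product $i$ labels at most one split, so $\ell$ lies in at most one of $L_i, R_i$; in particular $L_i \cap R_i = \emptyset$ for every $i$, and hence $\sum_{\ell \in L_i \cup R_i} y_\ell \le \sum_{\ell \in \leaves} y_\ell = 1$ for every feasible $\yb$.

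Now fix an extreme point $(\xb^*, \yb^*)$ of $\Fcal_{\ProductMIO}$ and let $P = \{\ell \in \leaves : y^*_\ell > 0\}$. Writing $a_i = \sum_{\ell \in L_i} y^*_\ell$ and $b_i = 1 - \sum_{\ell \in R_i} y^*_\ell$, feasibility gives $0 \le a_i \le x^*_i \le b_i \le 1$ (with $a_i \le b_i$ by the remark above), and extremality forces $x^*_i \in \{a_i, b_i\}$ for every $i$, since otherwise $(\xb^* \pm \epsilon \mathbf{e}_i, \yb^*)$ are both feasible for small $\epsilon > 0$. The heart of the argument is the claim that $|P| = 1$. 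Suppose not: pick a nonzero $\mathbf{d} \in \Rbb^{|\leaves|}$ supported on $P$ with $\sum_{\ell \in P} d_\ell = 0$, and define $d^x_i := \sum_{\ell \in L_i} d_\ell$ if $x^*_i = a_i$, and $d^x_i := -\sum_{\ell \in R_i} d_\ell$ if $x^*_i = b_i > a_i$. I claim $(\xb^* \pm \epsilon \mathbf{d}^x, \yb^* \pm \epsilon \mathbf{d})$ are both feasible for all sufficiently small $\epsilon > 0$, which contradicts extremality and forces $|P| = 1$.

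Verifying this claim is where I expect the real work to be, and it amounts to checking that $(\mathbf{d}^x, \mathbf{d})$ satisfies with equality every constraint of the linear relaxation~\eqref{prob:ProductMIO} that is tight at $(\xb^*, \yb^*)$ (the non-tight ones then survive a small step of either sign). The unit-sum constraint survives because $\sum_\ell d_\ell = 0$, and the bounds $y_\ell \ge 0$ survive because $\mathbf{d}$ is supported on $P$. The delicate part is the per-product bookkeeping: for each $i$ one must confirm that the single chosen value of $d^x_i$ is consistent with \emph{every} tight member of $\{\text{left-split}_i,\ \text{right-split}_i,\ x_i \ge 0,\ x_i \le 1\}$ simultaneously. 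This reduces to a short case analysis whose recurring mechanism is that whenever a tight constraint pins $\sum_{\ell\in L_i} y^*_\ell$ or $\sum_{\ell\in R_i} y^*_\ell$ to $0$ or to $1$, the disjointness $L_i \cap R_i = \emptyset$ from Assumption~\ref{assumption:Requirement_3} together with $\sum_{\ell \in P} d_\ell = 0$ forces $P$ to be disjoint from, or contained in, the relevant leaf set, and hence makes the corresponding linear equation on $\mathbf{d}$ hold automatically (for instance, $a_i = 0 \Rightarrow L_i \cap P = \emptyset \Rightarrow \sum_{\ell\in L_i}d_\ell = 0$; and $a_i = b_i \Rightarrow P \subseteq L_i \cup R_i \Rightarrow \sum_{\ell\in R_i}d_\ell = -\sum_{\ell\in L_i}d_\ell$). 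I regard this bookkeeping as the main obstacle, in the sense that it is the only place where something could fail, and it is exactly where Assumption~\ref{assumption:Requirement_3} is used in an essential way.

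Once $|P| = 1$ is established, integrality is immediate: writing $\yb^* = \mathbf{e}_{\ell^*}$, for each $i$ we get $\sum_{\ell\in L_i} y^*_\ell = \Ibb\{\ell^* \in L_i\}$ and $\sum_{\ell\in R_i} y^*_\ell = \Ibb\{\ell^* \in R_i\}$, and these cannot both equal $1$; thus the feasibility interval $[a_i, b_i]$ for $x^*_i$ is $\{1\}$ if $\ell^* \in L_i$, $\{0\}$ if $\ell^* \in R_i$, and $[0,1]$ otherwise, and in the last case extremality already forced $x^*_i \in \{0,1\}$. Hence $x^*_i \in \{0,1\}$ for all $i$ and $(\xb^*, \yb^*)$ is integral, so $\Fcal_{\ProductMIO}$ is an integral polytope. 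As a consistency check one can also note the converse direction, that every assortment $S \subseteq \Ncal$ corresponds to the feasible integral point $\xb = \mathbf{1}_S$, $\yb = \mathbf{e}_{\ell(S)}$, where $\ell(S)$ is the unique leaf reached by the tree under $S$.
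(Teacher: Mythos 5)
Your proof is correct, but it takes a genuinely different route from the paper's. The paper establishes Proposition~\ref{proposition:ProductMIO_Keq1_integral} by showing that, when $|F|=1$, \ProductMIO coincides with the mixed-integer formulation of a pairwise independent branching scheme for the combinatorial disjunctive constraint $\bigcup_{\ell \in \leaves} Q(\{\ell\})$ with branching sets $L_i, R_i$; the bulk of that proof is the set-theoretic verification (using Assumption~\ref{assumption:Requirement_3}) that this branching representation is valid, after which integrality is obtained by invoking Theorem~1 of Vielma and Nemhauser. You instead give a direct, self-contained polyhedral argument: extremality pins $x^*_i$ to $a_i$ or $b_i$, and a zero-sum perturbation $\mathbf{d}$ supported on the support $P$ of $\yb^*$, paired with $d^x_i = \sum_{\ell \in L_i} d_\ell$ or $-\sum_{\ell \in R_i} d_\ell$, preserves every tight constraint with equality; the only nontrivial checks are exactly the ones you isolate ($a_i = 0$ or $b_i = 1$ forces the relevant sum of $d$ to vanish, and $a_i = b_i$ forces $P \subseteq L_i \cup R_i$ so the two sums cancel), and all of them rest on $L_i \cap R_i = \emptyset$, which is the correct consequence of Assumption~\ref{assumption:Requirement_3}. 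I checked that this bookkeeping closes in every case, including the bounds $x_i \in \{0,1\}$ where the same mechanism yields $d^x_i = 0$, so the contradiction with extremality gives $|P| = 1$ and then binary $\xb^*$. The paper's route buys brevity modulo a cited theorem and places the formulation within the independent-branching framework; your route buys an elementary argument with no external machinery, and it proves the slightly stronger fact that every extreme point has $\yb$ equal to a unit vector, i.e., the polytope is integral in $(\xb, \yb)$ jointly.
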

Unlike Proposition~\ref{proposition:SplitMIO_Keq1_integral}, the proof of Proposition~\ref{proposition:ProductMIO_Keq1_integral} (see Appendix~\ref{appendix:proof_ProductMIO_Keq1_integral}) does not rely on total unimodularity, since the constraint matrix of \ProductMIO is generally not totally unimodular (see the example in Section~\ref{subsec:Non_TU_ProductMIO}). Instead, we exploit a connection between \ProductMIO and the independent branching scheme introduced in Section~\ref{subsec:model_ProductMIO}. The integrality property then follows from Theorem 1 of \citeapp{vielma2010mixed}.

Taken together, Propositions~\ref{proposition:SplitMIO_Keq1_integral} and \ref{proposition:ProductMIO_Keq1_integral} highlight a sharp distinction: for \SplitMIO, integrality depends critically on whether products are reused across splits, while for \ProductMIO, integrality holds universally in the single-tree case. This difference underscores the formulation strength of \ProductMIO and provides a general example in which the inclusion in Proposition~\ref{proposition:ProductMIO_stronger_than_SplitMIO} is strict.

Comparing the two propositions also offers insights into the computational behavior of the two formulations at full scale (see Sections~\ref{subsec:synthetic_T_integrality_gap} and \ref{subsec:synthetic_T_optimality_gap_T1_T2}). While the integrality properties in Propositions~\ref{proposition:SplitMIO_Keq1_integral} and \ref{proposition:ProductMIO_Keq1_integral} do not necessarily extend to multi-tree settings, formulations that exhibit strong structural properties in simplified cases often scale more effectively in practice \citepapp{vielma2015mixed,anderson2020strong}. On the other hand, as discussed in Section~\ref{subsec:SplitMIO_vs_ProductMIO}, \SplitMIO has the important advantage of enabling an efficient Benders decomposition approach for large-scale problems. In general, a compact formulation such as \ProductMIO may not yield the structure required for decomposition-based methods, so each formulation has distinct contexts where it is preferable.

\subsection{Proof of Proposition~\ref{proposition:SplitMIO_Keq1_integral}}
\label{appendix:proof_SplitMIO_Keq1_integral}

\emph{Proof of the sufficient condition:} \YCRR{When $|F| = 1$, we can write the feasible region $\Fcal_{\SplitMIO}$ of the relaxation of \SplitMIO as follows:
	\begin{align}
		\sum_{\ell \in \leftleaves(s)} y_{\ell} - x_{v(s)}  & \leq 0, \quad \forall\ s \in \splits, \label{eq:SplitMIO_TU_firstconstraint} \\
		\sum_{\ell \in \rightleaves(s)} y_{\ell} + x_{v(s)}  & \leq 1, \quad \forall\ s \in \splits, \\
		\sum_{\ell \in \leaves} y_{\ell} & = 1, \quad \forall\ \ell \in \leaves, \label{eq:SplitMIO_TU_unitsum}\\
		x_i & \leq 1, \quad \forall\ i \in [n]. \label{eq:SplitMIO_TU_x_leq_1} \\
		y_{\ell} & \geq 0, \quad \forall\  \ell \in \leaves,  \\
		x_i & \geq 0, \quad \forall\ i \in [n], \label{eq:SplitMIO_TU_lastconstraint}
	\end{align}
	When we concatenate all of the decision variables in a vector $(\yb, \xb)$, the constraint coefficient matrix can be in block form:
	\begin{equation}
		\Ab = \left[ \begin{array}{cc} 
			\Bb & \Cb \\
			\zerob & \Ib\\
			\Ib & \zerob \\
			\zerob & \Ib
		\end{array} \right],
	\end{equation}
	where $\Ib$ is used to denote an appropriately-sized identity matrix and $\zerob$ is an appropriately sized matrix with all entries equal to zero. The first row in the block form corresponds to constraints \eqref{eq:SplitMIO_TU_firstconstraint} - \eqref{eq:SplitMIO_TU_unitsum}, and the second row in the block form corresponds to \eqref{eq:SplitMIO_TU_x_leq_1}. The last two rows in the block form correspond to the element-wise non-negativity of $\yb$ and $\xb$, respectively.} We assume that the columns in $\Bb$ are ordered so as to follow the order of the leaves in the tree topology, i.e., the left-most leaf in the tree topology appears as the first column, followed by the second left-most leaf, and so on, until the right-most leaf which would be the final column in $\Bb$. Note that appending columns or rows from the identity matrix to a matrix preserves the total unimodularity of that matrix, the above matrix $\Ab$ is totally unimodular if and only if the matrix $[\Bb \ \Cb]$ is totally unimodular. Thus, we focus on establishing that $[\Bb \ \Cb]$ is totally unimodular.

Consider now a square submatrix $\Ab'$ of $[\Bb \ \Cb]$. To show that $[\Bb \ \Cb]$ is totally unimodular, we need to verify that $\det \Ab' \in \{+1,-1,0\}$. There are two cases to consider: \\

\noindent \emph{Case 1}. If $\Ab'$ is a submatrix of only $\Bb$, i.e., $\Ab' = [\Bb']$ for some square submatrix $\Bb'$ of $\Bb$, then the determinant of $\Ab'$ must be +1, -1, or 0. This is because the matrix $\Bb$ satisfies the consecutive ones property -- each row is of the form $[0,\dots,0,1,\dots,1,0,\dots,0]$ -- which is guaranteed because the columns of $\Bb$ are assumed to follow the order of the leaves in the tree topology. Since $\Bb$ satisfies the consecutive ones property and is an interval matrix, it is totally unimodular, and any square submatrix has determinant +1, -1, or 0 (\citeapp{fulkerson1965incidence}; see also Theorem 3.3(c) in \citeapp{bertsimas2005optimization}).\\

\noindent \emph{Case 2}. If $\Ab'$ a submatrix of both $\Bb$ and $\Cb$, i.e., $\Ab' = [\Bb' \ \Cb']$ for appropriate submatrices $\Bb'$ of $\Bb$ and $\Cb'$ of $\Cb$, write $\Cb'$ as $\Cb' = [\Cb'_{1} \ \dots \Cb'_{k} ]$, where each $\Cb'_{j}$ is a column of the submatrix $\Cb'$. For each column $\Cb'_j$, there are four possibilities: either the column contains all zeros; the column contains one +1, and all other entries are zero; the column contains one -1, and all other entries are zero; or the column contains one +1, one -1, and all other entries are zero. Note that it is not possible for a +1 to appear more than once, or a -1 to appear more than once, because of the hypothesis that each product appears at most once in the splits of the tree. 

For each column that falls into the first three categories, do nothing. For each column that falls into the fourth category, perform an elementary row operation as follows: take the row in which the +1 occurs, and add it to the row in which the -1 occurs. These two rows have the form
\begin{align}
	& [\fb_{\leftleaves(s)} \ \ \; 0 \ \dots \ 0\ -1\ 0 \ \dots 0 ] \qquad \text{(row with -1)} \\
	& [\fb_{\rightleaves(s)} \ 0 \ \dots \ 0\ +1\ 0 \ \dots 0 ] \qquad \text{(row with +1)}
\end{align}
for some split $s$, where $\fb_{\Ucal}$ is the incidence vector of the leaf set $\Ucal$, i.e., a 0-1 vector where a 1 occurs for each column in the submatrix $\Bb$ if the corresponding leaf is in the set $\Ucal$, and a 0 occurs otherwise. Observe that when we add them, the +1 and -1 cancel out, and the row in which the -1 occurs becomes
\begin{align}
	[\fb_{\leftleaves(s)} \! +\fb_{\rightleaves(s)} \ 0 \ \dots\ 0 ]
\end{align}
which is equivalent to
\begin{align}
	[\  \fb_{\leftleaves(s) \cup \rightleaves(s)} \ \  0 \ \dots \ 0 ],
\end{align}
because $\leftleaves(s)$ and $\rightleaves(s)$ do not intersect. Note also that for any split $s$, the vector $\fb_{\leftleaves(s) \cup \rightleaves(s)}$ satisfies the consecutive ones property, again by the assumption that the columns of $\Bb$ are ordered in the same way as the leaves in the tree topology.

Thus, by performing this operation -- adding the row with a -1 to the row with the +1 -- for each column $\Cb'_j$ with one +1 and one -1, we obtain a new matrix, $[\tilde{\Bb} \ \tilde{\Cb}]$, where each column of $\tilde{\Cb}$ is either a vector of zeros, a vector of zeros with one +1 (i.e., a column of an appropriately sized identity matrix), or a vector of zeros with one -1 (i.e., the negative of a column of an appropriately sized identity matrix). Additionally, each row of $\tilde{\Bb}$ still satisfies the consecutive ones property. This follows because the modified row satisfies the consecutive ones property, as explained in the previous paragraph, and all other unmodified rows also satisfy it, as discussed for Case 1. Note also that because each product appears at most once in each split, any row that is modified by the prescribed elementary row operation is only modified once, and cannot be modified multiple times. Thus, the square matrix $[\tilde{\Bb} \ \tilde{\Cb}]$ is totally unimodular, and its determinant therefore must be +1, -1 or 0. Since this matrix was obtained by elementary row operations from the submatrix $[\Bb' \ \Cb']$, it follows that this original submatrix has the same determinant as $[\tilde{\Bb} \ \tilde{\Cb}]$, and \YCRR{therefore its determinant is +1, -1, or 0.} \\

\noindent Thus, since $\det \Ab' \in \{+1,-1,0\}$, it follows that $[\Bb \ \Cb]$ is totally unimodular, and that the overall constraint matrix $\Ab$ is totally unimodular. By standard integer programming results (\citeapp{hoffman1956integral}; see also Theorem 3.1(a) in \citeapp{bertsimas2005optimization}), it follows that all extreme points of the polyhedron described by \eqref{eq:SplitMIO_TU_firstconstraint} - \eqref{eq:SplitMIO_TU_lastconstraint} are integral. Lastly, since the polyhedron \eqref{eq:SplitMIO_TU_firstconstraint} - \eqref{eq:SplitMIO_TU_lastconstraint} is isomorphic to the polyhedron described by constraints~\eqref{prob:SplitMIO_y_unitsum} - \eqref{prob:SplitMIO_y_nonnegative} when $|F| = 1$, each extreme point of \eqref{eq:SplitMIO_TU_firstconstraint} - \eqref{eq:SplitMIO_TU_lastconstraint} is an extreme point of $\Fcal_{\SplitMIO}$ (see Exercise 2.5(b) in \citeapp{bertsimas1997introduction}), and thus each extreme point of $\Fcal_{\SplitMIO}$ is integral. \\

\noindent\emph{Proof of the necessary condition:} To show this direction, we will prove the contrapositive. Thus, suppose that there exists a product $i^*$ such that for two different splits, $s_1, s_2$, it is the case that $v(s_1) = i^*$ and $v(s_2) = i^*$ (note that for convenience, we drop $t$ whenever it appears). We now need to show the existence of a non-integral extreme point of $\Fcal_{\SplitMIO}$. 

Let $\ell_1 \in \leftleaves(s_1)$ be any leaf left of $s_1$, and $\ell_2 \in \leftleaves(s_2)$ be any leaf left of $s_2$. Let $\nu^1_1, \dots, \nu^1_{k_1}$ be the set of split nodes traversed on the path from the root node to leaf $\ell_1$, where $\nu^1_1$ denotes the root node. Similarly, let $\nu^2_1, \dots, \nu^2_{k_2}$ be the set of split nodes traversed on the path from the root node to leaf $\ell_2$, where again $\nu^2_1$ denotes the root node. Let $k^*$ be defined as
\begin{equation}
	k^* = \max\{ 1 \leq k \leq \min\{ k_1, k_2\} \mid \nu^1_k = \nu^2_k\}.
\end{equation}
Observe that $k^*$ denotes the position of the node where the path from the root node to $\ell_1$ and the path from the root node to $\ell_2$ diverge. Let $s^* = \nu^1_{k^*} = \nu^2_{k^*}$ denote this split node. Without loss of generality, let us assume that $\ell_1 \in \leftleaves(s^*)$ and $\ell_2 \in \rightleaves(s^*)$. See an illustration in Figure~\ref{fig:TU_proof}.

\begin{figure}[h!]
	\centering
	\includegraphics[width=0.15\textwidth]{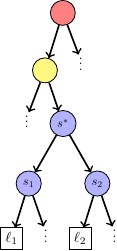}
	\caption{An illustration supporting the proof of the necessary condition in Proposition~\ref{proposition:SplitMIO_Keq1_integral}. Nodes are labeled according to the definitions in the proof. Unspecified parts of the tree are represented by vertical dots. Split nodes are colored blue, red, and yellow, corresponding to $x_i$ values of 0.5, 1, and 0, respectively, as defined in equations~\eqref{eq:TU_proof_xi_1} - \eqref{eq:TU_proof_xi_05}. Additionally, $y_{\ell_1} = y_{\ell_2} = 0.5$, while the $y_\ell$ values for all other leaf nodes are 0. \label{fig:TU_proof}}
\end{figure}

Next, we will define subsets of split nodes. We define the sets $\Scal^{0,\leftleaves}, \Scal^{0,\rightleaves}, \Scal^{1,\leftleaves}, \Scal^{1,\rightleaves}, \Scal^{2,\leftleaves}, \Scal^{2,\rightleaves}$ as
\begin{align}
	\Scal^{0,\leftleaves} & = \LS(\ell_1) \cap \{\nu^1_1, \dots, \nu^1_{k^*-1} \},  \\
	\Scal^{0,\rightleaves} & = \RS(\ell_1) \cap \{\nu^1_1, \dots, \nu^1_{k^*-1} \}, \\
	\Scal^{1,\leftleaves} & = \LS(\ell_1) \cap \{\nu^1_{k^*}, \nu^1_{k^*+1}, \dots, \nu^1_{k_1} \}, \\
	\Scal^{1,\rightleaves} & = \RS(\ell_1) \cap \{\nu^1_{k^*}, \nu^1_{k^*+1}, \dots, \nu^1_{k_1} \}, \\
	\Scal^{2,\leftleaves} & = \LS(\ell_2) \cap \{\nu^2_{k^*}, \nu^2_{k^*+1}, \dots, \nu^2_{k_2} \}, \\
	\Scal^{2,\rightleaves} & = \RS(\ell_2) \cap \{\nu^2_{k^*}, \nu^2_{k^*+1}, \dots, \nu^2_{k_2} \}.
\end{align}
In words, $\Scal^{0,\leftleaves}$ and $\Scal^{0,\rightleaves}$ are the sets of splits that we follow to the left and to the right, respectively, to reach $s^*$. From $s^*$, $\Scal^{1,\leftleaves}$ and $\Scal^{1,\rightleaves}$ are the sets of splits that we follow to the left and to the right, respectively, to reach $\ell_1$. Finally, from $s^*$, $\Scal^{2,\leftleaves}$ and $\Scal^{2,\rightleaves}$ are the sets of splits that we follow to the left and to the right, respectively, to reach $\ell_2$. 

Next, we define sets of product indices. We let $\Ical = \{ i \in \Ncal \mid v(s) = i \ \text{for some}\ s \in \splits\}$ be the overall set of products that appear in the splits of the tree. We define additional sets as follows:
\begin{align*}
	& \Ical^{0, \leftleaves} = \{ i \in \Ncal \mid v(s) = i \ \text{for some} \ s \in \Scal^{0, \leftleaves} \} \\
	& \Ical^{0, \rightleaves} = \{ i \in \Ncal \mid v(s) = i \ \text{for some} \ s \in \Scal^{0, \rightleaves} \} \\
	& \Ical^{1, \leftleaves}  = \{ i \in \Ncal \mid v(s) = i \ \text{for some} \ s \in \Scal^{1, \leftleaves} \} \\
	& \Ical^{1, \rightleaves} = \{ i \in \Ncal \mid v(s) = i \ \text{for some} \ s \in \Scal^{1, \rightleaves} \} \\
	& \Ical^{2, \leftleaves} = \{ i \in \Ncal \mid v(s) = i \ \text{for some} \ s \in \Scal^{2, \leftleaves} \} \\
	& \Ical^{2, \rightleaves}  = \{ i \in \Ncal \mid v(s) = i \ \text{for some} \ s \in \Scal^{2, \rightleaves} \}
\end{align*}

We now construct a fractional solution, as follows:
\begin{align}
	y_{\ell} & = 0, \quad \forall \ell \in \leaves \setminus \{\ell_1,\ell_2\}, \\
	y_{\ell_1} & = 0.5, \\
	y_{\ell_2} & = 0.5, \\
	x_{i} & = 1, \quad \forall i \in \Ical^{0,\leftleaves}, \label{eq:TU_proof_xi_1} \\
	x_{i} & = 0, \quad \forall i \in \Ical^{0,\rightleaves}, \label{eq:TU_proof_xi_0}\\
	x_{i} & = 0.5, \quad \forall i \in \Ical^{1,\leftleaves} \cup \Ical^{2,\leftleaves} \cup \Ical^{1,\rightleaves} \cup \Ical^{2,\rightleaves}, \label{eq:TU_proof_xi_05} \\
	x_i & = 0, \quad \forall i \in \Ncal \setminus (\Ical^{0,\leftleaves} \cup \Ical^{0,\rightleaves} \Ical^{1,\leftleaves} \cup \Ical^{2,\leftleaves} \cup \Ical^{1,\rightleaves} \cup \Ical^{2,\rightleaves})
\end{align}
This construction of the solution $(\xb,\yb)$ is well-defined because $ \left( \Ical^{1,\leftleaves} \cup \Ical^{2,\leftleaves} \cup \Ical^{1,\rightleaves} \cup \Ical^{2,\rightleaves} \right)$, $\Ical^{0,\leftleaves}$, and $\Ical^{1,\rightleaves}$ are mutually disjoint by their definitions and in accordance with Assumption~\ref{assumption:Requirement_3}. We illustrate this constructed solution in Figure~\ref{fig:TU_proof}.

It is also not difficult to verify that this solution is a feasible solution of $\Fcal_{\SplitMIO}$. Additionally, one can verify that this is the unique solution of the following system of active constraints: 
\begin{subequations}
	\begin{alignat}{2}
		& & & y_{\ell} = 0, \quad \forall \ \ell \in \leaves \setminus \{\ell_1, \ell_2\} \\
		& & & \sum_{\ell \in \leftleaves(s_1)} y_{\ell} = x_{v(s_1)}, \\
		& & & \sum_{\ell \in \leftleaves(s_2)} y_{\ell} = x_{v(s_2)}, \\
		& & & \sum_{\ell \in \leaves} y_{\ell} = 1, \\
		& & & \sum_{\ell \in \leftleaves(s)} y_{\ell} = x_{v(s)}, \quad \forall s \in \Scal^{0,\leftleaves}, \\
		& & & \sum_{\ell \in \rightleaves(s)} y_{\ell} = 1 - x_{v(s)}, \quad \forall s \in \Scal^{0,\rightleaves}  \\
		& & & \sum_{\ell \in \leftleaves(s)} y_{\ell} = x_{v(s)}, \quad \forall s \in \Scal^{1,\leftleaves} \cup \Scal^{2,\leftleaves}, \\
		& & & \sum_{\ell \in \rightleaves(s)} y_{\ell} = 1 - x_{v(s)}, \quad \forall s \in \Scal^{1,\rightleaves} \cup \Scal^{2,\rightleaves}, \\
		& & & x_i = 0, \quad \forall i \in \Ncal \setminus (\Ical^{0,\leftleaves} \cup \Ical^{0,\rightleaves} \Ical^{1,\leftleaves} \cup \Ical^{2,\leftleaves} \cup \Ical^{1,\rightleaves} \cup \Ical^{2,\rightleaves})
	\end{alignat}
\end{subequations}
Since the proposed $(\xb,\yb)$ is a feasible solution of $\Fcal_{\SplitMIO}$ and is uniquely determined by the above set of active constraints, it follows that there are $n + |\leaves|$ linearly independent active constraints. This means that $(\xb,\yb)$ is a basic feasible solution (and hence an extreme point) of $\Fcal_{\SplitMIO}$, which implies that $\Fcal_{\SplitMIO}$ is not integral. \Halmos

\subsection{Example of a Non-TU \ProductMIO Constraint Matrix}
\label{subsec:Non_TU_ProductMIO}

We present an instance of the \ProductMIO formulation in which the constraint matrix is not totally unimodular. Consider a decision forest model consisting of a single tree, denoted by $t$. This tree has a depth of three and contains seven split nodes. The root split corresponds to product 1, the two splits at depth two correspond to product 2, and the four splits at depth three correspond to product 3. These split nodes follow the structure depicted in the left panel of Figure~\ref{figure:example_T1_T2_T3}. Note that the products associated with the leaf nodes do not need to be specified, as $r_{t,\ell}$ appears only in the objective function of \ProductMIO and does not influence the constraint matrix. 

We now explicitly construct the constraint matrix. Since there is only one decision tree, we omit the subscript $t$. Ordering the variables as \YCRR{$(y_1,y_2,y_3,y_4,y_5,y_6,y_7,y_8,x_1,x_2,x_3)$}, the constraint matrix is given by
\setcounter{MaxMatrixCols}{20}
\begin{align}
	\label{eq:ProductMIO_matrix_nonTU}
	\begin{bmatrix}
		1 & 1 & 1 & 1 & 1 & 1 & 1 & 1 &      &       & \\
		1 & 1 & 1 & 1 &    &    &    &    & -1 &       &  \\
		1 & 1 &    &    & 1 & 1 &    &    &      & - 1 &\\
		1 &    & 1 &    & 1 &    & 1 &    &      &       &  -1  \\
		&    &    &    & 1 & 1 & 1 & 1 & 1  &  &  & \\
		&    & 1 & 1 &    &    & 1 & 1 &     & 1&  &  \\
		& 1 &  & 1 &  & 1 &  & 1  & &        & 1 
	\end{bmatrix},
\end{align}
where we omit the zeros. To demonstrate that this matrix is not totally unimodular, consider the submatrix formed by the second, third, and fourth rows and the columns corresponding to variables $y_2$, $y_3$, and $y_5$. This submatrix has the determinant
\begin{align*}
	\YCRR{\text{det} \left(  \begin{matrix}
			1 & 1 & 0\\
			1 & 0 & 1\\
			0 & 1 & 1
		\end{matrix}  \right)}
	= -2.
\end{align*}
Recall that a matrix is totally unimodular matrix if every square submatrix has a determinant of 0, 1 or -1. Therefore, the constraint matrix~\eqref{eq:ProductMIO_matrix_nonTU} is not totally unimodular.

\subsection{Proof of Proposition~\ref{proposition:ProductMIO_Keq1_integral}}

\label{appendix:proof_ProductMIO_Keq1_integral}

Define $\Delta^{\leaves} = \{ \yb \in \Rbb^{|\leaves|} \mid \sum_{\ell \in \leaves} y_{\ell} = 1; y_{\ell} \geq 0, \forall \ \ell \in \leaves \}$ to be the $(|\leaves|-1)$-dimensional unit simplex. In addition, for any $S \subseteq \leaves$, define $Q(S) = \{ \yb \in \Delta^{\leaves} \mid y_{\ell} \leq 0 \ \text{for} \ \ell \in \leaves \setminus S \}$. We write the combinatorial disjunctive constraint over the ground set $\leaves$ as $
\CDC(\leaves) = \bigcup_{\ell \in \leaves} Q(\{\ell\})$. Consider now the optimization problem
\begin{align}
	\label{prob:CDC_abstract}
	\underset{\yb}{\text{maximize}} \left\lbrace   \sum_{\ell \in \leaves} r_{\ell} y_{\ell} \,\, \bigg| \,\, \yb \in \CDC(\leaves)  \right\rbrace.
\end{align}
We will re-formulate this problem into a mixed-integer optimization problem. To do this, we claim that $\CDC(\leaves)$ can be written as the following pairwise independent branching scheme: 
\begin{equation}
	\bigcup_{\ell \in \leaves} Q(\{\ell\}) = \bigcup_{i=1}^n \left( Q(L_i) \cup Q(R_i) \right), \label{eq:CDC_IBS}
\end{equation}
where $L_i = \{ \ell \in \leaves \mid \ell \in \leftleaves(s)\ \text{for some}\ s \ \text{with} \ v(s) = i \}$ and $R_i = \{ \ell \in \leaves \mid \ell \in \rightleaves(s)\ \text{for some}\ s \ \text{with} \ v(s) = i \}$. Note that \eqref{eq:CDC_IBS} is equivalent to the statement
\begin{equation}
	\bigcup_{\ell \in \leaves} \{ \ell \} = \bigcup_{i=1}^n \left( (\leaves \setminus L_i) \cup (\leaves \setminus R_i) \right). \label{eq:CDC_IBS_leaves}
\end{equation}

To establish \eqref{eq:CDC_IBS_leaves}, it is sufficient to prove the following equivalence:
\begin{equation}
	\{ \ell \} = \bigcap_{i \in I(\ell)} (\leaves \setminus R_i) \cap \bigcap_{i \in E(\ell)} (\leaves \setminus L_i) \cap \bigcap_{ \substack{i \in \Ncal: \\ i \notin I(\ell) \cup E(\ell) }} (\leaves \setminus L_i), \label{eq:CDC_IBS_single_leaf}
\end{equation}
where $I(\ell) = \{ i \in \Ncal \mid \ell \in \bigcup_{s: v(s) = i} \leftleaves(s) \}$ and $E(\ell) = \{ i \in \Ncal \mid \ell \in \bigcup_{s: v(s) = i} \rightleaves(s) \} $.

We now prove \eqref{eq:CDC_IBS_single_leaf}.\\

\noindent \emph{Equation~\eqref{eq:CDC_IBS_single_leaf}, $\subseteq$ direction:} For $i \in I(\ell)$, we have that $\ell \in \bigcup_{s : v(s) = i} \leftleaves(s)$. This means that there exists $\bar{s}$ such that $\ell \in \leftleaves(\bar{s})$ and $v(\bar{s}) = i$. Since $\ell \in \leftleaves(\bar{s})$, this means that $\ell \notin \rightleaves(\bar{s})$ (a leaf cannot be to the left and to the right of any split). Moreover, $\ell$ cannot be in $\rightleaves(s)$ for any other $s$ with $v(s) = i$, because this would mean that product $i$ appears more than once along the path to leaf $\ell$, violating Assumption~\ref{assumption:Requirement_3}. Therefore, $\ell \in \leaves \setminus R_i$ for any $i \in I(\ell)$. 

For $i \in E(\ell)$, we have that $\ell \in \bigcup_{s: v(s) = i} \rightleaves(s)$. This means that there exists a split $\bar{s}$ such that $\ell \in \rightleaves(\bar{s})$ and $v(\bar{s}) = i$. Since $\ell \in \rightleaves(\bar{s})$, we have that $\ell \notin \leftleaves(\bar{s})$. In addition, $\ell$ cannot be in $\leftleaves(s)$ for any other $s$ with $v(s) = i$. Therefore, $\ell \in \leaves \setminus L_i$ for any $i \in E(\ell)$. 

Lastly, for any $i \notin I(\ell) \cup E(\ell)$, note that product $i$ does not appear in any split along the path from the root of the tree to leaf $\ell$. Therefore, for any $s$ with $v(s) = i$, it will follow that either $\leftleaves(s) \subseteq \leftleaves(s')$ for some $s'$ for which $\ell \in \rightleaves(s')$, or $\leftleaves(s) \subseteq \rightleaves(s')$ for some $s'$ for which $\ell \in \leftleaves(s')$ -- in other words, there is a split $s'$ such that every leaf in $\leftleaves(s)$ is to one side of $s'$ and $\ell$ is on the other side of $s'$. This means that $\ell'$ cannot be in $\leftleaves(s)$ for any $s$ with $v(s) = i$, or equivalently, $\ell \in \leaves \setminus L_i$ for any $i \notin I(\ell) \cup E(\ell)$. \\

\noindent \emph{Equation~\eqref{eq:CDC_IBS_single_leaf}, $\supseteq$ direction:} We will prove the contrapositive $\{ \ell' \in \leaves \mid \ell' \neq \ell \} \subseteq \bigcup_{i \in I(\ell)} R_i \cup \bigcup_{i \in E(\ell)} L_i \cup \bigcup_{ \substack{i \in \Ncal: \\ i \notin I(\ell) \cup E(\ell) }} L_i$. A straightforward result (see Lemma~EC.1 from \citeapp{misic2019optimization}) is that $\{ \ell' \in \leaves \mid \ell' \neq \ell \} = \bigcup_{s: \ell \in \leftleaves(s)} \rightleaves(s) \cup \bigcup_{s: \ell \in \rightleaves(s)} \leftleaves(s)$. 
Thus, if $\ell' \neq \ell$, then we have that $\ell' \in \rightleaves(s)$ for some $s$ such that $\ell \in \leftleaves(s)$, or $\ell' \in \leftleaves(s)$ for some $s$ such that $\ell \in \rightleaves(s)$. Let $i^* = v(s)$. In the first case, since $\ell \in \leftleaves(s)$, we have that $i^* \in I(\ell)$, and we thus have
\begin{align*}
	\rightleaves(s)  \subseteq \bigcup_{s': v(s') = i^*} \rightleaves(s') = R_{i^*} \subseteq \bigcup_{i \in I(\ell)} R_i \cup \bigcup_{i \in E(\ell)} L_i \cup \bigcup_{\substack{i' \in \Ncal: \\ i' \notin I(\ell) \cup E(\ell)}} L_i.
\end{align*}

In the second case, since $\ell \in \rightleaves(s)$, we have that $i^* \in E(\ell)$, and thus we have
\begin{align*}
	\leftleaves(s)  \subseteq \bigcup_{s': v(s') = i^*} \leftleaves(s')  = L_{i^*} \subseteq \bigcup_{i \in I(\ell)} R_i \cup \bigcup_{i \in E(\ell)} L_i \cup \bigcup_{\substack{i' \in \Ncal: \\ i' \notin I(\ell) \cup E(\ell)}} L_i.
\end{align*}

This establishes the validity of the pairwise independent branching scheme~\eqref{eq:CDC_IBS_leaves}. Thus, a valid formulation for problem~\eqref{prob:CDC_abstract} (see formulation (9) in \citeapp{vielma2010mixed}, formulation (14) in \citeapp{vielma2011modeling} and  formulation (13) in \citeapp{huchette2019combinatorial}) is
\begin{subequations}
	\begin{alignat}{2}
		& \underset{\xb, \yb}{\text{maximize}} & \quad & \sum_{\ell \in \leaves} r_{\ell} y_{\ell} \\
		& \text{subject to} & & \sum_{\ell \in \leaves} y_{\ell} = 1, \\
		& & & \sum_{\ell \in L_i} y_{\ell} \leq x_i, \quad \forall \ i \in \Ncal, \\
		& & & \sum_{\ell \in R_i} y_{\ell} \leq 1 - x_i, \quad \forall \ i \in \Ncal, \\
		& & & y_{\ell} \geq 0, \quad \forall \ \ell \in \leaves, \\
		& & & x_i \in \{0,1\}, \quad \forall\ i \in \Ncal.
	\end{alignat}%
	\label{prob:CDC_MIO}
\end{subequations}
Observe that, by the definition of $L_i$ and $R_i$, formulation~\eqref{prob:CDC_MIO} is identical to \ProductMIO when $|F| = 1$. By invoking Theorem~1 from \citeapp{vielma2010mixed} with appropriate modifications, we can assert that formulation~\eqref{prob:CDC_MIO} is integral. Therefore, when $|F| = 1$, formulation \ProductMIO is always integral. \Halmos

\vspace{1em}

\renewcommand{\bibfont}{\mysizeecom}
{\setlength{\bibsep}{3pt}
\bibliographystyleapp{plainnat}
\bibliographyapp{aodf_literature.bib}
}

\end{APPENDICES}
\end{document}